\newtheorem{theorem}{Theorem}[section]
\newtheorem{corollary}[theorem]{Corollary}
\newtheorem{lemma}[theorem]{Lemma}
\newtheorem{proposition}[theorem]{Proposition}
\theoremstyle{definition}
\newtheorem{definition}[theorem]{Definition}
\newtheorem{remark}[theorem]{Remark}
\newcommand{\eps}{\varepsilon}
\newcommand{\mattia}[1]{{\color{black}{#1}}}
\newcommand{\Lip}{\textup{Lip}}
\newcommand{\N}{\mathbb{N}}
\newcommand{\R}{\mathbb{R}}
\newcommand{\diver}{\textup{div}}
\newcommand{\Id}{\textup{Id}}
\newcommand{\defin}{:=}
\newcommand{\Hoelder}[3]{\mathcal{C}^{#1,#2}(#3)}
\newcommand{\mylabel}[2]{#2\def\@currentlabel{#2}\label{#1}}
\newcommand{\say}[1]{``#1''}
\newenvironment{claim}[1]{\par\noindent\textit{Claim:}\space#1}{}
\newenvironment{claimproof}[1]{\par\noindent\textit{Proof of Claim:}\space#1}{\leavevmode\unskip\penalty9999 \hbox{}\nobreak\hfill\quad\hbox{$\blacksquare$}}
\renewenvironment{framed}[1][\hsize]
{\MakeFramed{\hsize#1\advance\hsize-\width \FrameRestore}}%
{\endMakeFramed}
\title{Mean Field Games of Controls with Dirichlet boundary conditions}
\begin{document}

\author{Mattia Bongini\footnote{Advanced Analytics Business Line, CRIF S.p.A.,
		Via M. Fantin 3, Bologna, IT-40131, Italy. (e-mail: m.bongini@crif.com)} \ \& Francesco Salvarani\footnote{
L\'eonard de Vinci P\^ole Universitaire, Research Center,
92916 Paris La D\'efense, France \& 
Dipartimento di Matematica ``F. Casorati'',
Universit\`a degli Studi di Pavia,
Via Ferrata 1, 27100 Pavia, Italy. (e-mail: francesco.salvarani@unipv.it)}}

\maketitle

\begin{abstract}
	In this paper we study a mean-field games system with Dirichlet boundary conditions in a closed domain and in a \textit{mean-field \mattia{game of controls}} setting, that is in which the dynamics of each agent is affected not only by the average position of the rest of the agents but also by their average optimal choice. This setting allows the modeling of more realistic real-life scenarios in which agents not only will leave the domain at a certain point in time (like during the evacuation of pedestrians or in debt refinancing dynamics) but also act competitively to anticipate the strategies of the other agents.
	
	\noindent
	We shall establish the existence of Nash Equilibria for such class of mean-field \mattia{game} of controls systems under certain regularity assumptions on the dynamics and the Lagrangian cost. Much of the paper is devoted to establishing several a priori estimates which are needed to circumvent the fact that the mass is not conserved (as we are in a Dirichlet boundary condition setting). In the conclusive sections, we provide examples of systems falling into our framework as well as numerical implementations.
\end{abstract}

\tableofcontents

\section{Introduction}\label{sec:intro}

\paragraph{Context and literature review.} The study of dynamical systems of interacting agents \mattia{has become extremely popular} in the last years, since their flexibility and tractability has allowed them to faithfully model the main emerging patterns of several biological, social and economic phenomena: from the collective motion of animals \cite{parisi08} and pedestrians \cite{bellomo2011SR, helbing2001RMP}, to the behavior of market participants \cite{cordier2005kinetic} and the emergence of price volatility \cite{ahn13}, passing through the study of optimization heuristics \cite{dorigo2005ant,kennedy2011particle}. For recent developments on their application in the modeling of complex systems, we point to the surveys \cite{bak2013nature,bonginichapsparse,carrillo2017review,viza12} and references therein.

Beside the literature on this topic, another complementary literature has grown in parallel at the same pace: the one on the control of such systems. This literature can be roughly divided into two main streams. The one on \textit{centralized controls} focuses on the modeling approach of an external control agent that exerts its force to steer the system towards a desired optimal configuration. Many papers have studied this framework in the discrete \cite{CFPT12,bonginioriginsparse} as well as in the continuous setting \cite{bongini2016pontryagin, MFOC, piccoli2015control, fornasier2019mean, burger2020instantaneous}, where the framework has come to be known as \textit{mean-field optimal control}.

Alongside this centralized approach to control, there is the stream dedicated to decentralized control strategies: these consist in assuming that each agent, besides being subjected to forces induced in a feedback manner by the rest of the population, follows an individual strategy to coordinate with the other agents based on their position. \textit{Mean-field games} (in short MFG) stem from this control setting when the number of agents is very large, and were first introduced in \cite{lasry2007mean} and \mattia{independently} in the engineering community by \cite{huang2003individual}. For a comparison of the MFG setting and the mean field optimal control one, we refer the interested reader to the book \cite{bensoussan2013mean}.

MFG have provided a powerful framework for the description of large systems of mutually competing agents, such as pedestrian dynamics (for which we highlight to the reader the interesting papers \cite{burger2013mean,santambrogio2011modest}) and market dynamics (a topic which has seen a wealth of literature flourishing since the seminal paper \cite{gueant2011mean}). For these modeling scenarios, it is of particular relevance the study of MFG with Dirichlet boundary conditions, since they arise naturally in situations where agents can leave the domain, such as in evacuation problems, in the case of financial default or in an exhaustible resources framework \cite{graber2015existence}. We mention that, in the paper \cite{ferreira2019existence}, the authors have proven the existence and uniqueness of solutions to a stationary MFG with Dirichlet boundary conditions, while in \cite{MR3906158} the author has studied well-posedness of MFG in the context of optimal stopping (a framework resembling Dirichlet boundary conditions).

However, the MFG framework may not be enough to fully capture the behavior of competing agents since in real-life situations, when choosing a strategy, they usually take into account not only the positions of their competitors but also their strategy. For such reason, a more general class of MFG systems were introduced in \cite{gomes2014existence} under the terminology of \textit{extended MFG models} or \textit{mean-field \mattia{game of controls}} to allow for stochastic dynamics depending upon the joint distribution of the controlled state and the control process. The mean-field \mattia{game of controls} setting allowed the authors of \cite{cardaliaguet2016mean} to analyze optimal liquidation strategies in a context where trader's choices affect the public price of a good, and has since been employed by several authors \cite{acciaio2019extended,alasseur2020extended, fu2021mean}. Existence and uniqueness results for mean-field \mattia{game} of controls on $\mathbb{T}^d$ were discussed in the \mattia{recent} work \cite{kobeissi2019classical}.
 
Beside the recent advances on the theoretical side, a rich literature on numerical methods for MFG has flourished. The first results go back to the pioneering times of MFG theory, such as 
\cite{MR2679575}, then many other developments have been published (such as the articles \cite{MR3097034, MR2888257, MR3575615}).
We refer to \cite{MR4214777} for an up-to-date and more complete picture on the state-of-the-art on the numerics of MFG equations.
However, if we consider the study of mean-field games systems in a mean-field \mattia{game of controls} setting, the literature is much less developed.
In particular, \cite{MR4146720} has proposed a finite difference scheme coupled with an iterative method for solving the systems of nonlinear equations that arise in the discrete setting. Further details on this topic can be found in \cite{lauriere2021numerical}.

\paragraph{Aim of the paper and main contributions.} In this paper we study the following family of mean-field \mattia{game of controls} models with Dirichlet boundary conditions that naturally arise in the context of pedestrian and investor dynamics:

\begin{align}\label{eq:mfg}
\left\{\begin{aligned}
&\partial_t u(t,x) + \sigma \Delta u(t,x) - H(t,x,D u(t,x);\mu_t) = 0, & \text{ for } x\in\Omega \text{ and } t \in[0,T]\\
&\partial_t m_t(x) - \sigma \Delta m_t(x) - \diver(D_p H(t,x,D u(t,x);\mu_t)m_t(x)) = 0, & \text{ for } x\in\Omega \text{ and } t \in[0,T]\\
&\mu_t = (\Id,\alpha^*(t,\cdot, Du(t,\cdot);\mu_t))_{\sharp}m_t, &\text{ for }  t \in[0,T]\\
&m_0(x) = \overline{m}(x), u(T,x) = \psi(x,m_T), & \text{ for } x\in\overline{\Omega} \\
&m_t(x) = 0, u(t,x) = \psi(x,m_t), & \text{ for } x\in\partial\Omega\text{ and } t \in[0,T]
\end{aligned}\right.
\end{align}

Here, $m_t$ denotes the distribution of the agents' states, $u$ is the value function of the optimal control problem (with optimal control $\alpha^*$), and $\mu_t$ stands for the joint distribution of the state and the control. \mattia{The boundary condition for $m$, that is $m=0$ on $\partial\Omega$, indicates that the agents stop taking part to the game as soon as they reach the boundary.} As we shall see in a moment, the above system is obtained from a standard MFG setting as soon as we require that agents need to optimize their strategy by taking into account the strategy of the others, which leads to the fixed-point relationship
$$\mu_t = (\Id,\alpha^*(t,\cdot, Du(t,\cdot);\mu_t))_{\sharp}m_t$$
between the joint distribution $\mu_t$ and its marginals $m_t$ and \mattia{$\alpha^*(t,\cdot, Du(t,\cdot);\mu_t)_{\sharp}m_t$}.

A primary contribution of this paper is that we give sufficient conditions for the existence of Nash Equilibria for such systems (that is, their well-posedness). To prove the result we need to employ a double fixed-point method, in accordance to the formulation of the system. To this end we need to establish several a priori regularity estimates on the dynamics of our system. The Dirichlet boundary condition prevents us to leverage upon the conservation of mass of $m_t$ for such estimates, hence we need to rely on weaker results to work around the issue, and to resort to techniques that, to the best of our knowledge, are novel in the mean-field \mattia{game} of controls framework. The importance of Dirichlet boundary conditions in applications can be appreciated in \cite{MR4146720}, which provides novel numerical schemes to simulate solutions of such problems, although without providing existence results. Moreover, apart from the technical leap of dealing with Dirichlet boundary conditions, we remark that the setting is more general from the one of \cite{kobeissi2019classical} since we also prove our existence result on $\R^d$ (rather than on $\mathbb{T}^d$). Furthermore, our framework allows for richer agents' dynamics like, for instance, the Cucker-Smale and the Hegselmann-Krause models in crowd motion. All these differences introduce several specific technical complications that set the two works apart, as far as technical details and modeling applications are concerned. The uniqueness problem for our modeling framework shall be the subject of a dedicated forthcoming work.

A second contribution of the paper is that we provide two explicit models fitting into our  framework belonging to very different paradigms: one in the context of evacuation problems, the other formulated in a debt refinancing setting. This shows the flexibility of our framework and its potential to be exported to different modeling scenarios.

A third and final contribution is the implementation, through an iterative procedure based on the particle method, of mean-field games system with Dirichlet boundary conditions in a mean-field \mattia{game of controls} setting.
This method draws \mattia{has an analogy} with the results of convergence of large population games to mean field games (see, for example, 
\cite{lauriere2020convergence}).
Up to the best of our knowledge, \mattia{our approach is} one of the first uses of particle methods to numerically solve 
mean-field games system with Dirichlet boundary conditions in a mean-field \mattia{game of controls} setting, in which an iterative procedure is implemented for reducing the control space.

\paragraph{Derivation of the model.} To help the reader in the interpretability of the mean-field \mattia{game} of controls system \eqref{eq:mfg}, we now give a formal derivation from the particle dynamics of an individual agent optimizing its strategy.

We denote by $X_t\in\overline{\Omega}$ the state of an \textit{infinitesimal} agent of the system in the \textit{state space} $\overline{\Omega}$, and by $\alpha_t$ its control, which takes values in the \textit{control space} $A \subset \R^d$. The distribution density of the state-control pair $(X_t,\alpha_t)$ shall be denoted by $\mu_t$, which is a probability measure on the product space $\overline{\Omega}\times A$ whose first marginal $\pi_{1\sharp}\mu_t$ is the distribution of the agents' states $m_t$.

We assume the dynamics of an agent to be
\begin{align*}
\left\{\begin{aligned}
d X_t &= b(t,X_t,\alpha^*_t;\mu_t)dt + 2\sqrt{\sigma}dW_t,\\
X_{\mattia{0}} &= x_0,
\end{aligned}\right.
\end{align*}
where $W_t$ is a $d$-dimensional Brownian motion. Due to the presence of a boundary, the integral form of the dynamics is given by
\begin{align*}
X_t = x_0 + \int_0^{t\wedge\tau} b(s,X_s,\alpha^*_s;\mu_s) ds + 2\sigma\int^{t\wedge\tau}_0 dW_s
\end{align*}
for any $t \mattia{\in (0, \tau)}$, where $\tau$ denotes the \textit{exit time of the agent from $\Omega$}
\begin{align}\label{eq:exitime}
\tau \defin \inf \{t\in[0,T]\mid X_t \in \partial \Omega \text{ or } t = T \}.
\end{align}
The optimal control $\alpha^*_t$ is chosen to solve
\begin{align}\label{eq:costfunmfg}
\min_{\alpha\in \mathcal{U}} J(\mattia{0},x_0,\alpha;\mu) = \mathbb{E}\left[\int^{T\wedge\tau}_0 \mathcal{L}(t,X_t,\alpha_t;\mu_t)dt + \psi(X_{T\wedge\tau},m_{T\wedge\tau})\right].
\end{align}
The control set $\mathcal{U}$ is the set of progressively measurable functions $\alpha:[0,T]\rightarrow A$ with respect to the filtration generated by the process $\{W_t\}_{t\in[0,T]}$. Notice that, since we are in a \textit{mean-field \mattia{game} of controls} setting, the dynamics of each agent is affected not only by the average position of the rest of the agents, given by the distribution $m_t$, but also by their average optimal choice $\alpha^*_t$, whence the dependence of $\mathcal{L}$ on $\mu_t$, instead of just its first marginal as in the usual MFG framework. 

For a given $\mu:[0,T]\rightarrow\mathcal{M}_1(\overline{\Omega}\times A)$, the value function
\begin{align*}
u(\mattia{0},x_0) = \inf_{\alpha\in\mathcal{U}} J(\mattia{0},x_0,\alpha;\mu)
\end{align*}
satisfies the following Hamilton-Jacobi-Bellman equation with Dirichlet boundary conditions
\begin{align}\label{eq:mfghjb}
\left\{\begin{aligned}
&\partial_t u(t,x) + \sigma \Delta u(t,x) - H(t,x,D u(t,x);\mu_t) = 0, & \text{ for } x\in\Omega \text{ and } t \in[0,T]\\
&u(T,x) = \psi(x,m_T) & \text{ for } x\in\overline{\Omega}\\
&u(t,x) = \psi(x,m_t) & \text{ for } x\in\partial\Omega\text{ and } t \in[0,T]
\end{aligned}\right.
\end{align}
where $H$ stands for the Hamiltonian of the system
\begin{align}\label{def:hamiltonian}
H(t,x,p;\nu) \defin \sup_{\alpha \in A}\left\{-p\cdot b(t,x,\alpha;\nu) - \mathcal{L}(t,x,\alpha;\nu) \right\}
\end{align}

If we assume that for every $(t,x,p,\nu)$ there exists a unique $\alpha^*(t,x,p,\nu)$ for which
$$H(t,x,p;\nu) = -p\cdot b(t,x,\alpha^*(t,x,p,\nu);\nu) - \mathcal{L}(t,x,\alpha^*(t,x,p,\nu);\nu),$$
then the optimal control of an infinitesimal agent at time $t$ is given by $\alpha^*(t,X_t, Du(t,X_t);\mu_t)$. This means that, in an equilibrium configuration, the measure $\mu_t$ is the pushforward of $m_t$ by the map $x\mapsto (x,\alpha^*(t,x, Du(t,x);\mu_t))$, i.e., it holds the fixed-point relationship
\begin{align}\label{eq:fixpointmu}
\mu_t = (\Id,\alpha^*(t,\cdot, Du(t,\cdot);\mu_t))_{\sharp}m_t.
\end{align}
This, in particular means that $\mu_t$ is uniquely determined by $Du_t$ and $m_t$, hence for every $t \in [0,T]$ it holds
\begin{align}\label{eq:mufunction}
\mu_t = \Gamma(Du_t,m_t)
\end{align}
for some function $\Gamma$ (which will be investigated thoroughly in Section \ref{sec:gammasection}).

At the same time, the evolution of the mass $m_t$ satisfies a Fokker-Planck type equation with drift given by
$$b(t,x,\alpha^*(t,x,Du(t,x);\mu_t)) = -D_p H(t,x,Du(t,x);\mu_t),$$
which yields the system
\begin{align}\label{eq:fokkerplanck}
\left\{\begin{aligned}
&\partial_t m_t(x) - \sigma \Delta m_t(x) - \diver(D_p H(t,x,D u(t,x);\mu_t)m_t(x)) = 0, & \text{ for } x\in\Omega \text{ and } t \in[0,T]\\
&m_0(x) = \overline{m}(x), & \text{ for } x\in\overline{\Omega} \\
&m_t(x) = 0, & \text{ for } x\in\partial\Omega\text{ and } t \in[0,T]
\end{aligned}\right.
\end{align}


Plugging together the dynamics \eqref{eq:mfghjb} of $u$ and \eqref{eq:fokkerplanck} of $m$, as well as the fixed-point relationship \eqref{eq:fixpointmu} for $\mu_t$, we get the \textit{mean-field games system} \eqref{eq:mfg}.



\paragraph{Structure of the paper.} The paper is organized as follows: in Section \ref{sec:assumption} we shall state the main assumptions under which we will establish the existence of Nash equilibria of system \eqref{eq:mfg}. Section \ref{sec:estimates} contains the main result of the paper, that is the well-posedness of system \eqref{eq:mfg} under the set of assumptions stated in Section \ref{sec:assumption} through a priori estimates and regularity techniques. In Section \ref{sec:examples} we shall provide examples of systems satisfying our set of hypotheses. We conclude the paper with Section \ref{sec:numerics} which reports a numerical implementation of our modeling scenario.

\section{Preliminaries and main assumptions}\label{sec:assumption}

In this section we shall first give some preliminary notations and results that will be instrumental in the following sections. We then move on to list the assumptions under which we shall prove the well-posedness of the mean-field \mattia{game} of controls system \eqref{eq:mfg}.

Let $\Omega\subseteq\R^d$ be an open, \mattia{pre}compact set with smooth boundary $\partial \Omega$, fix $T > 0$ and define $\overline{Q}_T \defin \mattia{[0,T]\times\overline{\Omega}}$. We shall denote by $|\cdot|$ any finite $\R^d$-norm. For any domains $X, Y$, we define by 
\begin{itemize}
	\item $\mathcal{C}^n(X;Y)$ the space of all functions from $X$ to $Y$ which are continuous together with all their derivatives up to order $n$, equipped with the norm $\|\cdot\|_{\mathcal{C}^n(X;Y)}$ (or simply $\|\cdot\|_{\mathcal{C}^n})$ whenever clear from the context);
	\item $\mathcal{C}^\alpha(X;Y)$ for any $\alpha \in (0,1)$ the space of all H\"older
	continuous functions from $X$ to $Y$ with exponent $\alpha$;
	\item $\mathcal{C}^{n+\alpha}(X;Y)$ the set of all functions whose
	$n$ derivatives are all in $\mathcal{C}^\alpha(X;Y)$;
	\item for any subset $X \subseteq Q_T$, $\Hoelder{n +\alpha}{(n+\alpha)/2}{X;Y}$ the set of functions from $X$ to $Y \subseteq \R^n$ which
	are continuous together with all the derivatives $D^r_t D^s_x$ for $2r + s \leq n$ and whose derivatives of order $n$ are H\"older continuous with exponent $\alpha$ in space and exponent $\alpha/2$ in time;
\end{itemize}
As a shorthand notation, we shall write $\Hoelder{\gamma}{\gamma/2}{X;Y}$ in place of $\Hoelder{n +\alpha}{(n+\alpha)/2}{X;Y}$ with the convention that $n = \lfloor\gamma\rfloor$ and $\alpha = n - \gamma$. We denote the $\gamma$-H\"older norm of $\Hoelder{\gamma}{\gamma/2}{X;Y}$ by $\|\cdot\|_{\Hoelder{\gamma}{\gamma/2}{X;Y}}$. We remember that the $\gamma$-H\"older norm is the sum of the supremum norms for derivatives up to $\lfloor\gamma\rfloor$-order, plus the sum of $(n - \gamma)$-H\"older coefficients for the $\lfloor\gamma\rfloor$-order derivatives, for a precise definition we refer the reader to \cite[pages 7-8]{solonnikov}.

Given a compact set $B\subset\R^d$ \mattia{and $\Sigma$ a $\sigma$-algebra over $B$}, we shall denote by $\mathcal{M}_1(B)$ the set of all positive measures from \mattia{$\Sigma$ to $\R$} such that $m(B) \leq 1.$ It is well-known that 
the weak$^*$ convergence in $\mathcal{M}_1(B)$ can be metrized by any of the following metrics (parametrized by \mattia{$a > 1$}):
\begin{align*}
d_{a}(\mu,\nu)\defin \sum^{\infty}_{k = 1} \frac{1}{{a}^{ k}}\frac{\left|\int_{B}f_k(x)d(\mu - \nu)(x)\right|}{1 + \left|\int_{B}f_k(x)d(\mu - \nu)(x)\right|}.
\end{align*}
where, by the Stone-Weierstrass theorem, the set $(f_k)_{k\in\N^+}$ can be obtained by putting together \mattia{any} basis of the vector space of polynomials of degree $n$ over the ring $\R[x_1,\ldots,x_d]$ for every $n\in\N$.

The following technical result shall be helpful in the following.
\begin{lemma}\label{le:techweakconv}
	Let $B\subset \mattia{\R^d}$ be a compact set, \mattia{let
	\begin{align}\label{eq:deltadefin}
	\delta(B) \defin \sup \{|x| \mid x \in B\}
	\end{align}}
	 and let $h:B\rightarrow\R^d$ be a \mattia{continuous} bounded function. Then for every $a>0$ it holds
	$$\left|\int_{B}h(x) d(\nu_1-\nu_2)(x)\right| \leq a(1+2\delta(B))\|h\|_{\infty}  d_a(\nu_1,\nu_2)$$
	holds for every $\nu_1,\nu_2\in\mathcal{M}_1(B)$, where $\|\cdot\|_{\infty}$ denotes the supremum norm.
\end{lemma}
\begin{proof}
	\mattia{First of all, notice that $\delta <+\infty$ since $B$ is compact}. If $(f_k)_{k\in\N^+}$ is a basis of polynomials then each $f_k$ satisfies
	$$\mattia{\left|\int_{B}f_k(x) d(\nu_1-\nu_2)(x)\right| \leq \sup_{x\in B}|f_k(x)| \left(|\nu_1(B)| + |\nu_2(B)|\right) \leq 2 \delta(B)^{\deg(f_k)},}$$
	where we used the fact that $\nu_1,\nu_2\in\mathcal{M}_1(B)$ and we denote by $\deg(f_k)$ the degree of $f_k$. But then, using that $f_1 \equiv 1$ we have
	\begin{align*}
	\left|\int_{B}h(x) d(\nu_1-\nu_2)(x)\right| &\leq \|h\|_{\infty}\left|\int_{B} d(\nu_1-\nu_2)(x)\right|\\
	&= \|h\|_{\infty}a\left(1+\left|\int_{B}f_1(x) d(\nu_1-\nu_2)(x)\right|\right)\frac{1}{a}\frac{\left|\int_{B}f_1(x) d(\nu_1-\nu_2)(x)\right|}{1+\left|\int_{B} f_1(x)d(\nu_1-\nu_2)(x)\right|}\\
	&\leq \|h\|_{\infty}a(1+2\delta(B))\sum^\infty_{k=1}\frac{1}{a^k}\frac{\left|\int_{B}f_k(x) d(\nu_1-\nu_2)(x)\right|}{1+\left|\int_{B}f_k(x) d(\nu_1-\nu_2)(x)\right|}\\
	&\leq a(1+2\delta(B))\|h\|_{\infty} d_a(\nu_1,\nu_2).
	\end{align*}
	This concludes the proof.
\end{proof}

\mattia{
\begin{remark}
	The fact that, for a space of measures on a compact set, the Wasserstein and weak$^*$ convergence are equivalent is well-known (see, e.g., \cite[Thm 5.10]{santambrogiobook}). Lemma \ref{le:techweakconv} provides only a computable bound for such equivalence for a given continuous bounded function $h$, which shall be useful later on.
\end{remark}
}

We now state the necessary hypotheses that the various functions of our extended MFG system need to fulfil.

\begin{framed}[1.1\textwidth]
	\begin{center}
		{\bf Assumptions (A)}
	\end{center}
In the following, fix $\beta > 0$ and a \mattia{continuous} bounded function $\xi:\overline{\Omega}\times A\rightarrow\R^d$. We shall assume that
\begin{enumerate}[label=(A\arabic*)]
	\item\label{item:compactset} The sets $\overline{\Omega}$ and $A$ are compact subsets of $\R^d$, and we denote by \mattia{$\delta \defin \delta(\overline{\Omega}\times A)$, where $\delta$ is defined as in \eqref{eq:deltadefin}}.
	
	\item\label{item:positivem} $\overline{m}\in \mathcal{C}^{2+\beta}(\overline{\Omega};\R)$ such that $\overline{m}(x)\geq0$ for every $x\in\overline{\Omega}$ and
	$$\int_{\overline{\Omega}}\overline{m}(x)dx=1.$$
	
	\item\label{item:hoelderpsi} The map $\psi:[0,T]\times \Hoelder{\beta}{\beta/2}{\overline{Q}_T;\R} \rightarrow \R$ satisfies the following conditions: if $m \in \Hoelder{\beta}{\beta/2}{\overline{Q}_T;\R}$ then
	\begin{enumerate}[label=$(\roman*)$]
		\item\label{item:hoelderpsi1} $\psi(\cdot,m) \in \Hoelder{\beta}{\beta/2}{\overline{Q}_T;\R}$,
		\item\label{item:hoelderpsi2} $\|\psi(\cdot,m)\|_{\Hoelder{\beta}{\beta/2}{\overline{Q}_T;\R}} \leq \mathcal{K}_{\psi}\left(\|m\|_{\Hoelder{\beta}{\beta/2}{\overline{Q}_T;\R}}\right)$ for some continuous function $\mathcal{K}_{\psi}:\R\rightarrow\R$.
	\end{enumerate}

	\item\label{item:inverseb} The map $b:[0,T]\times\overline{\Omega}\times A\times\mathcal{M}_1(\overline{\Omega}\times A)\rightarrow\R^d$
	satisfies the following conditions:
	\begin{enumerate}[label=$(\roman*)$]
		\item\label{item:inverseb2} if $\alpha \in\Hoelder{\beta}{\beta/2}{\overline{Q}_T;\R^d}$ and $\nu_t \in \mathcal{C}^{\beta/2}([0,T];\mathcal{M}_1(\overline{\Omega}\times A))$, then it holds
		$$\mattia{b(t,x,\alpha(t,x);\nu_t)} \in \Hoelder{\beta}{\beta/2}{\overline{Q}_T;\R^d}.$$
	\end{enumerate}

	\item\label{item:monotoneL} The map $\mathcal{L}:[0,T]\times\overline{\Omega}\times A\times\mathcal{M}_1(\overline{\Omega}\times A)\rightarrow\R$ satisfies 
	the following conditions:
	\begin{enumerate}[label=$(\roman*)$]
		\item\label{item:monotoneL1} for every $\nu^1,\nu^2 \in \mathcal{M}_1(\overline{\Omega}\times A)$ and for every $t \in [0,T]$ the monotonicity condition holds:
		\begin{align*}
		\int_{\overline{\Omega}\times A} (\mathcal{L}(t,x,\alpha;\nu^1) - \mathcal{L}(t,x,\alpha;\nu^2))d(\nu^1 - \nu^2)(x,\alpha) \geq 0,
		\end{align*}
		\item\label{item:monotoneL2} if $\alpha \in\Hoelder{\beta}{\beta/2}{\overline{Q}_T;\R^d}$ and $\nu_t \in \mathcal{C}^{\beta/2}([0,T];\mathcal{M}_1(\overline{\Omega}\times A))$, then it holds
		$$ \mathcal{L}(t,x,\alpha(t,x);\nu_t) \in \Hoelder{\beta}{\beta/2}{\overline{Q}_T;\R}.$$		
	\end{enumerate}

	\item\label{item:contalpha} For each $t,x,p,\nu\in[0,T]\times\overline{\Omega}\times\R^d\times\mathcal{M}_1(\overline{\Omega}\times A)$ there exists a unique maximum point $\alpha^*(t,x,p;\nu) \in A$ of
	\begin{align}\label{eq:alphamaximum}
	- p\cdot b(t,x,\alpha^*;\nu) - \mathcal{L}(t,x,\alpha^*;\nu)
	\end{align}
	and the corresponding function $\alpha^*:[0,T]\times\overline{\Omega}\times\R^d\times\mathcal{M}_1(\overline{\Omega}\times A)\rightarrow A$ satisfies
	\begin{enumerate}[label=$(\roman*)$]
		\item\label{item:contalpha1} if $p \in\Hoelder{\beta}{\beta/2}{\overline{Q}_T;\R^d}$ and $\nu_t \in \mathcal{C}^{\beta/2}([0,T];\mathcal{M}_1(\overline{\Omega}\times A))$, then it holds
		$$\alpha(t,x,p(t,x);\nu_t) \in \Hoelder{\beta}{\beta/2}{\overline{Q}_T;A},$$
		\item \label{item:contalpha4} for every $p^1, p^2 \in \R^d$ and $\nu^1, \nu^2 \in \mathcal{M}_1(\overline{\Omega}\times A)$ it holds
		\begin{align*}
		\int_{\overline{\Omega}}|\alpha^*(t,x,p^1;\nu^1)-\alpha^*(t,x,p^2;\nu^2)| dx \leq
		L\left(|p^1 - p^2| + \left|\int_{\overline{\Omega}\times A}\xi(x,\alpha) d(\nu^1-\nu^2)(x,\alpha)\right|\right),
		\end{align*}
		where the constant $L$ satisfies
		\begin{align}\label{eq:alphalipbound}
		L < \frac{1}{\mattia{3(\delta+1)}(1+2\delta)\|\xi\|_{\infty}}.
		\end{align}
	\end{enumerate}

	\item\label{item:hoelderH} The Hamiltonian $H:[0,T]\times \overline{\Omega} \times \R^d \times \mathcal{M}_1(\overline{\Omega}\times A) \rightarrow \R$ defined in \eqref{def:hamiltonian} satisfies the following statement: there exist two continuous functions $\mathcal{H}_1, \mathcal{H}_2:\R\rightarrow\R$ such that, for every
		$(p,m)\in\Hoelder{\beta}{\beta/2}{\overline{Q}_T;\R^d}\times \Hoelder{\beta}{\beta/2}{\overline{Q}_T;\R}$ it holds
		\begin{align*}
		\|H(t,x,p(t,x);\Gamma(p_t,m_t))\|_{\Hoelder{\beta}{\beta/2}{\overline{Q}_T;\R}}
		\leq \mathcal{H}_1\left( \|p\|_{\Hoelder{\beta}{\beta/2}{\overline{Q}_T;\R^d}} + \|m\|_{\Hoelder{\beta}{\beta/2}{\overline{Q}_T;\R}} \right),
		\end{align*}
		as well as
		\begin{align*}
		\|D_pH(t,x,p(t,x);\Gamma(p_t,m_t))\|_{\Hoelder{\beta}{\beta/2}{\overline{Q}_T;\R^d}}
		\leq \mathcal{H}_2\left( \|p\|_{\Hoelder{\beta}{\beta/2}{\overline{Q}_T;\R^d}} + \|m\|_{\Hoelder{\beta}{\beta/2}{\overline{Q}_T;\R}} \right),
		\end{align*}
	where $\Gamma$ is the function defined in \eqref{eq:mufunction}.
\end{enumerate}
\end{framed}

\mattia{
\begin{remark}
	We allow for $\beta$ to be greater than 1 since, from the notation adopted at the beginning of the Section, in that case we refer to the H\"older regularity of the $\lfloor\beta\rfloor$-order derivatives.
\end{remark}}
	
Solutions of system \eqref{eq:mfg} will be interpreted in the classical sense.

\begin{definition} \label{def:solution}
	Fix $\beta > 0$. We define a $\beta$-\textit{classical solution of system} \eqref{eq:mfg} any function $(u,m) \in \Hoelder{2+\beta}{1+\beta/2}{\overline{Q}_T;\R}\mattia{^2}$ which satisfies system \eqref{eq:mfg} in the classical sense.
\end{definition}

What follows is the main theoretical result of the paper, the well-posedness of system \eqref{eq:mfg}.

\begin{theorem}[Existence of Nash equilibria for system \eqref{eq:mfg}] \label{th:mainresult}
	Let Assumptions (A) hold for some constant $\beta > 0$ and some bounded function $\xi:\overline{\Omega}\times A\rightarrow\R^d$. Then there exists a $\beta$-classical solution of system \eqref{eq:mfg}.
\end{theorem}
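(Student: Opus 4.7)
The plan is to establish existence through a double fixed-point argument: an inner fixed point resolves the implicit relation $\mu_t = \Gamma(Du_t, m_t)$ at each time, while an outer Schauder fixed point couples the forward Fokker-Planck equation for $m$ with the backward Hamilton-Jacobi-Bellman equation for $u$.

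For the inner fixed point, for any fixed $p \in \R^d$ and $m_t \in \mathcal{M}_1(\overline{\Omega})$, I would show that the map $\nu \mapsto (\Id, \alpha^*(t, \cdot, p; \nu))_\sharp m_t$ is a strict contraction on $(\mathcal{M}_1(\overline{\Omega} \times A), d_a)$ for a suitable $a > 0$. The key estimate combines the quantitative bound of assumption \ref{item:contalpha4} on $\alpha^*(t,x,p;\nu^1)-\alpha^*(t,x,p;\nu^2)$ (tested against $\xi$) with Lemma \ref{le:techweakconv}, which converts the test integral $\int \xi\,d(\nu^1-\nu^2)$ into $d_a(\nu^1,\nu^2)$. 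The bound \eqref{eq:alphalipbound} on $L$ is engineered precisely so that the resulting contraction constant is strictly less than one, uniquely pinning down $\mu_t = \Gamma(p,m_t)$. Continuity and quantitative regularity of $\Gamma$ in the norms inherited from $p$ and $m$ then follow from the same contraction estimate together with assumption \ref{item:contalpha1}.

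For the outer fixed point, I would work on a closed convex subset $\mathcal{K} \subset \Hoelder{\beta}{\beta/2}{\overline{Q}_T;\R}$ of admissible densities, bounded in $\Hoelder{2+\beta}{1+\beta/2}{\overline{Q}_T;\R}$. To each $m \in \mathcal{K}$, associate $\Psi(m) = \widetilde m$ as follows. First, solve the HJB \eqref{eq:mfghjb} with Hamiltonian $\widetilde H(t,x,p) \defin H(t,x,p;\Gamma(p,m_t))$ and with boundary/terminal data $\psi(\cdot,m_\cdot)$, to obtain $u$; using assumption \ref{item:hoelderpsi} on $\psi$ together with assumption \ref{item:hoelderH} on $H$ along $\Gamma$, standard parabolic Schauder estimates give $u \in \Hoelder{2+\beta}{1+\beta/2}{\overline{Q}_T;\R}$. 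Second, set $\mu_t = \Gamma(Du_t, m_t)$ via the inner fixed point. Third, solve the linear Fokker-Planck problem \eqref{eq:fokkerplanck} with drift $-D_p H(t,x, Du_t;\mu_t)$, initial datum $\overline m$ (from \ref{item:positivem}) and zero Dirichlet boundary data, obtaining $\widetilde m \in \Hoelder{2+\beta}{1+\beta/2}{\overline{Q}_T;\R}$ by assumption \ref{item:hoelderH} on $D_pH$ and the classical theory of linear parabolic equations. Compactness of $\Psi$ follows from the embedding $\Hoelder{2+\beta}{1+\beta/2}{\overline{Q}_T;\R} \hookrightarrow \Hoelder{\beta}{\beta/2}{\overline{Q}_T;\R}$, and continuity of $\Psi$ from the corresponding stability estimates on the HJB, on $\Gamma$ and on the Fokker-Planck equation.

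The main difficulty, and the place where most of the technical work has to be invested, lies in showing that $\mathcal{K}$ can be chosen invariant under $\Psi$, that is, in establishing the uniform a priori H\"older estimates on $u$ and $\widetilde m$. With Dirichlet boundary conditions the mass of $m_t$ is not conserved, so the conservation-based $L^\infty$ and $L^1$ controls customary in the MFG literature are unavailable; instead one must rely on parabolic maximum principles, on heat-kernel bounds for the linearized problems, and on boundary regularity of Solonnikov type for parabolic equations up to $\partial\Omega$. These deliver bounds on $\|\widetilde m\|_{\Hoelder{\beta}{\beta/2}{\overline{Q}_T;\R}}$ depending only on $\|\overline m\|_{\mathcal{C}^{2+\beta}(\overline{\Omega};\R)}$, on $\|D_p H(\cdot,Du;\mu)\|_{\Hoelder{\beta}{\beta/2}{\overline{Q}_T;\R^d}}$, and on $T$; feeding these into \ref{item:hoelderpsi} and \ref{item:hoelderH} closes the loop and identifies a ball $\mathcal{K}$ stable under $\Psi$. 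Schauder's fixed-point theorem then furnishes $m^* = \Psi(m^*)$ and, by construction of $\Psi$, a $\beta$-classical solution $(u^*, m^*)$ of \eqref{eq:mfg}.
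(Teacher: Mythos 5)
Your overall architecture (an inner fixed point resolving $\mu_t=\Gamma(Du_t,m_t)$, an outer fixed point coupling the HJB and Fokker--Planck equations via Solonnikov-type Schauder estimates) matches the paper's. There are, however, two places where your argument genuinely diverges and, as written, does not close.

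First, the outer fixed point. You propose plain Schauder on a ball $\mathcal{K}$ invariant under $\Psi$ and assert that the a priori estimates ``close the loop'' and identify such a ball. Under Assumptions (A) this cannot be guaranteed: the estimate one obtains is of the form $\|\Psi(m)\|\leq \mathcal{K}(\|(u,m)\|)$, where $\mathcal{K}$ is built from the arbitrary continuous growth functions $\mathcal{H}_1,\mathcal{H}_2$ of \ref{item:hoelderH} and $\mathcal{K}_{\psi}$ of \ref{item:hoelderpsi}; nothing forces $\mathcal{K}(M)\leq M$ for some $M$, so an invariant ball need not exist. The paper circumvents exactly this by using the Leray--Schauder theorem with a homotopy parameter $\eps\in[0,1]$ (system \eqref{eq:mfgfixedpoint}): one then only needs a bound, uniform in $\eps$, on \emph{actual fixed points} of $T(\cdot,\eps)$ (Lemma \ref{lem:unifboundsol}), obtained from parabolic regularity for the coupled system rather than from invariance of a ball under the iteration map. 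You need either this homotopy device or an additional structural hypothesis to replace it.

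Second, the inner fixed point. You claim that $\nu\mapsto(\Id,\alpha^*(t,\cdot,p;\nu))_{\sharp}m$ is a strict contraction for $d_a$ and that \eqref{eq:alphalipbound} is ``engineered precisely'' for this. In the paper, \eqref{eq:alphalipbound} serves a different purpose: in Lemma \ref{lem:hoelderfixpointmeas} it guarantees that $\Psi[p,m]$ maps a ball of $\beta/2$-H\"older curves of measures into itself, i.e.\ it propagates time-regularity of $t\mapsto\Gamma(p_t,m_t)$. Existence of the fixed point is proved by Schauder (weak$^*$ compactness of $\mathcal{M}_1$), and uniqueness comes from the Lasry--Lions monotonicity of $\mathcal{L}$, Assumption \ref{item:monotoneL}-\ref{item:monotoneL1} --- an ingredient your proof never invokes. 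Moreover, your contraction estimate faces a concrete obstruction: \ref{item:contalpha}-\ref{item:contalpha4} controls $\int_{\overline{\Omega}}|\alpha^*(\cdot;\nu^1)-\alpha^*(\cdot;\nu^2)|\,dx$ against Lebesgue measure, whereas the contraction requires controlling $\int_{\overline{\Omega}}|\alpha^*(\cdot;\nu^1)-\alpha^*(\cdot;\nu^2)|\,dm(x)$ and then summing $\Lip(f_k)a^{-k}$ over the polynomial basis, so the resulting constant involves quantities (in particular a bound on the density of $m$ and the series over $k$) that \eqref{eq:alphalipbound} alone does not control. Either verify that the contraction constant is genuinely below one in this setting, or follow the Schauder-plus-monotonicity route of Lemma \ref{lem:fixpointmeas}.
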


\section{Existence of Nash equilibria}\label{sec:estimates}

In this section we shall prove that, under the Assumptions (A) stated in the previous section, there exists a $\beta$-classical solution of system \eqref{eq:mfg}.

Throughout this section, we will fix a $\beta > 0$ and a bounded function $\xi:\overline{\Omega}\times A\rightarrow\R^d$, and assume Assumptions (A) hold. We shall start with some preliminary results on the Fokker-Planck equation and the fixed point relationship \eqref{eq:fixpointmu}, and then we shall derive sharper a priori regularity estimates for $u$, $m$ and $\mu$, which will then be instrumental in obtaining the well-posedness of system \eqref{eq:mfg}.

\subsection{Existence and stability of the map $\Gamma$}\label{sec:gammasection}

To establish the existence of the fixed point \eqref{eq:fixpointmu} we will need that $m_t \in \mathcal{M}_1(\overline{\Omega})$, so that we may rely on the weak$^*$ compactness of this space. However, since we consider smooth solutions of the Fokker-Planck equation \eqref{eq:fokkerplanck}, there is no guarantee that this holds true. We shall prove this fact by using a similar technique to the one employed in \cite{graber2015existence}.

\begin{proposition}\label{prop:measfp}
	Assume that $u:\overline{Q}_T\rightarrow\R$ and $\mu:[0,T]\rightarrow\mathcal{M}_1(\overline{\Omega}\times A)$ are such that
	$$D_p H(t,x,D u(t,x);\mu_t) \in L^{\infty}(\overline{Q}_T)$$
	and let $m$ be a smooth solution of \eqref{eq:fokkerplanck} with $u$ and $\mu$. Then for every $t \in [0,T]$ and $x \in \overline{\Omega}$ it holds
	\begin{align*}
	m_t(x) \geq 0 \quad \text{ and } \quad \|m_t\|_{L^1(\Omega)} \leq \|\overline{m}\|_{L^1(\Omega)}.
	\end{align*}
	Therefore, under assumption \ref{item:positivem}, we have $m_t \in \mathcal{M}_1(\overline{\Omega})$ for every $t\in[0,T]$.
\end{proposition}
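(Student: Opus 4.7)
The plan is to establish the two assertions separately via classical arguments for linear parabolic equations with bounded coefficients, adapted to the Dirichlet setting. Throughout, set $v(t,x) := D_p H(t,x,Du(t,x);\mu_t)$, which belongs to $L^\infty(\overline{Q}_T)$ by hypothesis, so that the Fokker--Planck equation reads
$$\partial_t m_t = \sigma \Delta m_t + \diver(v\, m_t),\qquad m_0 = \overline{m},\qquad m_t|_{\partial\Omega} = 0.$$
This is a linear parabolic equation in divergence form with bounded drift.

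For the non-negativity, I would test the equation against $-m^-$, where $m^- := \max(-m,0)$. Since $m$ is smooth and vanishes on $\partial\Omega$, so does $m^-$, and every spatial integration by parts produces no boundary contribution. Using $\nabla m^- = -\nabla m\,\mathbf{1}_{\{m<0\}}$ together with the chain rule, standard manipulations yield
$$\tfrac{1}{2}\frac{d}{dt}\int_\Omega (m^-)^2\, dx + \sigma \int_\Omega |\nabla m^-|^2\, dx = -\int_\Omega m^-\, v\cdot \nabla m^-\, dx.$$
Applying Young's inequality to absorb $|\nabla m^-|$ into the dissipation term gives $\frac{d}{dt}\|m^-\|_{L^2(\Omega)}^2 \leq (\|v\|_\infty^2/\sigma)\,\|m^-\|_{L^2(\Omega)}^2$. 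Since $\overline{m}\geq 0$ implies $m^-(0,\cdot)\equiv 0$, Gr\"onwall's inequality forces $m^- \equiv 0$ on $\overline{Q}_T$, so $m_t \geq 0$ everywhere.

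Having established $m_t\geq 0$, the mass bound follows by integrating the Fokker--Planck equation over $\Omega$ and applying the divergence theorem:
$$\frac{d}{dt}\int_\Omega m_t\,dx = \sigma \int_{\partial\Omega} \nabla m_t\cdot n\, d\mathcal{H}^{d-1} + \int_{\partial\Omega} m_t\,(v\cdot n)\, d\mathcal{H}^{d-1}.$$
The second integral vanishes thanks to $m_t|_{\partial\Omega}=0$. For the first, since $m_t\geq 0$ inside $\Omega$ and $m_t = 0$ on $\partial\Omega$, at every boundary point the outward normal derivative satisfies $\nabla m_t \cdot n \leq 0$. Hence $\frac{d}{dt}\int_\Omega m_t\, dx \leq 0$, giving $\|m_t\|_{L^1(\Omega)} \leq \|\overline{m}\|_{L^1(\Omega)} = 1$ by Assumption \ref{item:positivem}. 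Combined with the non-negativity established above, this yields $m_t \in \mathcal{M}_1(\overline{\Omega})$, as required.

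The argument is essentially textbook, and since $m$ is assumed to be smooth and solves a linear parabolic equation with bounded coefficients under Dirichlet data, all integrations by parts, the chain rule for $m^-$, and the pointwise sign of the outward normal derivative on $\partial\Omega$ are straightforward to justify. I therefore do not expect any substantive obstacle beyond bookkeeping — the Dirichlet condition is in fact what makes the argument work cleanly, as it cancels the inconvenient boundary contributions in both steps.
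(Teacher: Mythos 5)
Your proof is correct. The non-negativity step is essentially the paper's argument: the paper obtains exactly your energy identity for $\|m^-_t\|_{L^2(\Omega)}^2$, followed by Young and Gr\"onwall, except that it first regularizes the truncation by testing with $\phi_\delta'(m)$ for $\phi_\delta(s)=(s^-)^{2+\delta}\in\mathcal{C}^2$ and then lets $\delta\to 0^+$, whereas you apply Stampacchia's truncation directly; since $m$ is smooth this is a matter of taste, and your route is shorter. The mass bound, however, is where you genuinely diverge. The paper tests with $\phi_\delta'(m)$ for $\phi_\delta(s)=s^{1+\delta}$ and passes to the limit $\delta\to0^+$ via Fatou and dominated convergence to show $\int_\Omega m_t\,dx-\int_\Omega\overline{m}\,dx\le 0$. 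You instead integrate the equation over $\Omega$, kill the drift flux using $m_t|_{\partial\Omega}=0$, and control the diffusive flux by the elementary observation that a smooth non-negative function vanishing on $\partial\Omega$ has non-positive outward normal derivative there. Your argument is cleaner and avoids the slightly delicate limit in the paper (where the singular factor $m_s^{\delta-1}$ appears); its only structural cost is that it \emph{requires} the non-negativity step to be done first, which you correctly order, while the paper's two steps are formally independent. Both are sound.
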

\begin{proof}
	Let $\delta > 0$ and $\phi_{\delta}:\R\rightarrow\R$ be defined as
	\begin{align*}
	\phi_{\delta}(s) \defin (s^-)^{2+\delta},
	\end{align*}
	where $s^-\defin (|s|-s)/2$. Notice that $\phi_\delta \in \mathcal{C}^2(\R)$ and
	\begin{align*}
	\phi_{\delta}(s) =
	\begin{cases}
	0 & \text{ if } s \geq 0,\\
	(-s)^{2+\delta} & \text{ if } s < 0.
	\end{cases}
	\end{align*}
	Furthermore
	\begin{align*}
	\phi'_{\delta}(s) =
	\begin{cases}
	0 & \text{ if } s \geq 0,\\
	-(2+\delta)(-s)^{1+\delta} & \text{ if } s < 0,
	\end{cases}\quad\text{ and }\quad
	\phi''_{\delta}(s) =
	\begin{cases}
	0 & \text{ if } s \geq 0,\\
	-(2+\delta)(1+\delta)(-s)^{\delta} & \text{ if } s < 0,
	\end{cases}
	\end{align*}
	which implies that for $\delta \rightarrow 0^+$ we have
	\begin{align*}
	\phi_\delta(s)\rightarrow (s^-)^2, \quad \phi'_{\delta}(s) \rightarrow 2s^- \quad \text{ and } \quad \phi''_{\delta}(s) \rightarrow -2\cdot\mathbbm{1}_{\left\{s < 0\right\}}(s)
	\end{align*}
	pointwise. Finally, notice that $\phi'_{\delta}(0) = 0$.
	
	We now multiply the function $\phi'_{\delta}(m_t(x))$ to \eqref{eq:fokkerplanck} and we integrate in time and space to get
	\begin{align}\begin{split}\label{eq:fokkerplanckweak}
	\int^t_0\int_{\Omega}\partial_t m_s(x)&\phi'_{\delta}(m_s(x))dxds \\
	&= \int^t_0\int_{\Omega}\big(\sigma\Delta m_s(x)+\diver(D_p H(s,x,D u(s,x);\mu_s)m_s(x)) \big)\phi'_{\delta}(m_s(x)) dxdt.
	\end{split}\end{align}
	We use integration by parts on both sides of the equation. On the left, we have the identity
	\begin{align*}
	\int^t_0\int_{\Omega}\partial_t m_s(x)\phi'_{\delta}(m_s(x))dxds =  \int_{\Omega}\phi_{\delta}(m_t(x))dx - \int_{\Omega}\phi_{\delta}(\overline{m}(x))dx;
	\end{align*}
	so that now it holds
	\begin{align*}
	\int_{\Omega}&\phi_{\delta}(m_t(x))dx - \int_{\Omega}\phi_{\delta}(\overline{m}(x))dx \\
	&= - \int^t_0\int_{\Omega}\big(\sigma|D m_s(x)|^2 +D_p H(s,x,D u(s,x);\mu_t)m_s(x)\cdot D m_s(x) \big) \phi''_{\delta}(m_s(x)) dxds.
	\end{align*}
	We can now invoke the dominated convergence theorem to pass the limit $\delta\rightarrow 0^+$ inside the integrals in order to get
	\begin{align*}
	\int_{\Omega}&|m^-_t(x)|^2dx - \int_{\Omega}|\overline{m}^-(x)|^2dx \\
	&= - \int^t_0\int_{\Omega}\big(\sigma D m^-_s(x)|^2 +D_p H(s,x,D u(s,x);\mu_s)m^-_s(x)\cdot D m^-_s(x) \big) dxds.
	\end{align*}
	By invoking assumptions \ref{item:positivem} (yielding $\overline{m}^- = 0$) and Young's inequality we get
	\begin{align*}
	\int_{\Omega}|m^-_t(x)|^2dx &\leq - \sigma\int^t_0\int_{\Omega}|D m^-_s(x)|^2dxds + \frac{\sigma}{2}\int^t_0\int_{\Omega}|D m^-_s(x)|^2dxds \\
	&\quad\quad+ \frac{1}{2\sigma}\int^t_0\int_{\Omega}|D_p H(s,x,D u(s,x);\mu_s)m^-_s(x)|^2 dxds.
	\end{align*}
	As a shorthand notation, set $K(t,x) \defin D_p H(t,x,D u(t,x);\mu_t)$. By the assumptions on $u$ and $\mu$ we have that $\|K\|_{L^{\infty}(\overline{Q}_T)}<+\infty$, therefore H\"older inequality and Poincar\'{e} inequality yield the following estimate from above
	\begin{align*}
	\int_{\Omega}|m^-_t(x)|^2dx &\leq - \frac{\sigma}{2}\int^t_0\int_{\Omega}|D m^-_s(x)|^2dxds +\frac{1}{2\sigma}\| K\|^2_{L^{\infty}(\overline{Q}_T)}\int^t_0\int_{\Omega}|m^-_s(x)|^2 dxds\\
	&\leq \left(-\frac{\sigma C_{\Omega,2}}{2}+\frac{1}{2\sigma}\|K\|^2_{L^{\infty}(\overline{Q}_T)}\right)\int^t_0\int_{\Omega}|m^-_s(x)|^2 dxds,
	\end{align*}
	where $C_{\Omega,2}$ denotes Poincar\'{e} constant. Using Gronwall's lemma we finally get $m^-_t(x) = 0$ for every $t \in [0,T]$ and $x\in\overline{\Omega}$ as desired.
	
	To get the inequality $\|m_t\|_{L^1(\Omega)} \leq \|\overline{m}\|_{L^1(\Omega)}$ it suffices to use the function $\phi_\delta(s) = s^{1+\delta}$. Indeed, using this new version of $\phi'_{\delta}(m(t,x))$ in \eqref{eq:fokkerplanckweak} we obtain
	\begin{align*}
	\int_{\Omega}m_t(x)^{1+\delta}dx - \int_{\Omega}\overline{m}(x)^{1+\delta}dx
	&= - \int^t_0\int_{\Omega}\frac{(1+\delta)\delta}{m_s(x)^{1-\delta}}\sigma|D m_s(x)|^2dxds \\
	&\quad \quad -\int^t_0\int_{\Omega}(1+\delta)\delta m_s(x)^{\delta}D_p H(s,x,D u(s,x);\mu_s)\cdot D m_s(x) dxds.
	\end{align*}
	Therefore, passing to the limit $\delta\rightarrow 0^+$ we get
	\begin{align*}
	\int_{\Omega}m_t(x)dx - \int_{\Omega}\overline{m}(x)dx & = \limsup_{\delta \rightarrow 0^+}\left[\int_{\Omega}m_t(x)^{1+\delta}dx - \int_{\Omega}\overline{m}(x)^{1+\delta}dx\right]\\
	&= -\liminf_{\delta\rightarrow 0^+} \int^t_0\int_{\Omega}\frac{(1+\delta)\delta}{m_t(x)^{1-\delta}}\sigma|D m_s(x)|^2dxds \\
	&\quad \quad -\liminf_{\delta\rightarrow 0^+}\int^t_0\int_{\Omega}(1+\delta)\delta m_s(x)^{\delta}D_p H(s,x,D u(s,x);\mu_s)\cdot D m_s(x) dxds\\
	&\leq 0,
	\end{align*}
	having invoked Fatou's lemma for the first term and the dominated convergence theorem for the second term.
\end{proof}

As anticipated, the well-posedness of the fixed-point map \eqref{eq:fixpointmu} shall follow from the compactness properties of the measure space $\mathcal{M}_1$. The strategy of this proof follows \cite{cardaliaguet2016mean}.

\begin{lemma}\label{lem:fixpointmeas}
	Assume that $m \in \mathcal{M}_1(\overline{\Omega}), p \in L^{\infty}(\overline{\Omega};\R^d)$ and let $\alpha^*$ be the unique maximizer of \eqref{eq:alphamaximum}. Then, for every $t \in [0,T]$, the map $\Phi[p,m]:\mathcal{M}_1(\overline{\Omega}\times A)\rightarrow\mathcal{M}_1(\overline{\Omega}\times A)$ defined as
	\begin{align*}
	\Phi[p,m](\mu) \defin (\Id,\alpha^*(t,\cdot,p(\cdot);\mu))_{\sharp}m
	\end{align*}
	admits a unique fixed point $\mu^* = \Gamma(p,m)$.
\end{lemma}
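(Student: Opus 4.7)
The plan is to apply Banach's fixed point theorem in the complete metric space $(\mathcal{M}_1(\overline{\Omega}\times A), d_a)$ for a suitable choice of $a>0$. Completeness comes for free: by \ref{item:compactset} the product $\overline{\Omega}\times A$ is compact, so Banach--Alaoglu makes $\mathcal{M}_1(\overline{\Omega}\times A)$ weak$^*$-compact, and this space is metrized by $d_a$. The map $\Phi[p,m]$ trivially sends $\mathcal{M}_1(\overline{\Omega}\times A)$ into itself, since a pushforward preserves total mass and $m(\overline{\Omega}) \leq 1$. Everything then hinges on showing that $\Phi[p,m]$ is a $d_a$-contraction.

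To get the contraction, I would test $\Phi[p,m](\mu^1) - \Phi[p,m](\mu^2)$ against each polynomial $f_k$ from the definition of $d_a$. By the change-of-variables formula for pushforwards,
$$\int f_k\,d(\Phi[p,m](\mu^1) - \Phi[p,m](\mu^2)) = \int_{\overline{\Omega}} \bigl[f_k(x, \alpha^*(t,x,p(x);\mu^1)) - f_k(x,\alpha^*(t,x,p(x);\mu^2))\bigr]\,dm(x).$$
Since $f_k$ is a polynomial on the compact set $\overline{\Omega}\times A$, it is Lipschitz in $\alpha$ with some finite constant $C_k$. Combining this with the pointwise version of \ref{item:contalpha4} (taking $p^1=p^2=p(x)$, so that only the $\xi$-term survives) and with $m(\overline{\Omega})\leq 1$, the integral above is bounded by $C_k\,L\,\bigl|\int \xi\,d(\mu^1-\mu^2)\bigr|$, and Lemma \ref{le:techweakconv} then converts the right-hand side into $C_k\,L\,a(1+2\delta)\|\xi\|_\infty\,d_a(\mu^1,\mu^2)$. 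Summing over $k$ with the weights $1/a^k$ and using the elementary bound $x/(1+x)\leq x$, I would reach
$$d_a(\Phi[p,m](\mu^1),\Phi[p,m](\mu^2)) \leq L\,a(1+2\delta)\|\xi\|_\infty\,S(a)\,d_a(\mu^1,\mu^2),\qquad S(a) \defin \sum_{k\geq 1} \frac{C_k}{a^k}.$$

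The final step is to pick $a$ so that the factor in front of $d_a(\mu^1,\mu^2)$ is strictly less than $1$. This is where the explicit denominator $(3+3\delta)(1+2\delta)\|\xi\|_\infty$ in \eqref{eq:alphalipbound} plays its role: the factor $3+3\delta$ is clearly tailored so that, with a judicious choice of the polynomial basis (most likely monomials of increasing degree, taking advantage of $f_1\equiv 1$ giving $C_1=0$) and a matching choice of $a$, one obtains $a\,S(a)\leq 3+3\delta$, which together with \eqref{eq:alphalipbound} yields a contraction constant strictly below $1$. I expect this calibration to be the main obstacle: the careful bookkeeping of the $\alpha$-Lipschitz constants $C_k$ on a set of diameter $\delta$, and the verification that $\sum_k C_k/a^k$ sums to exactly the required bound, are the nontrivial estimates that make the whole scheme sharp. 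Once this is in place, Banach's fixed point theorem directly produces the unique $\mu^* = \Gamma(p,m)$, concluding the argument.
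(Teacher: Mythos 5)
Your route is genuinely different from the paper's. The paper splits the lemma in two: existence comes from the Schauder fixed point theorem on the convex, weak$^*$-compact set $\mathcal{M}_1(\overline{\Omega}\times A)$, for which only the \emph{continuity} of $\mu\mapsto\Phi[p,m](\mu)$ is needed (derived from \ref{item:contalpha}-\ref{item:contalpha4}); uniqueness is then obtained by a Lasry--Lions-type argument from the monotonicity condition \ref{item:monotoneL}-\ref{item:monotoneL1}, adding and subtracting $p\cdot b$ and using that $\alpha^*$ is the \emph{unique} maximizer to force $\alpha^*(t,x,p(x);\mu_1)=\alpha^*(t,x,p(x);\mu_2)$ $m$-a.e. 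Your Banach-contraction argument delivers existence and uniqueness in one stroke and, notably, does not use the monotonicity assumption at all; conversely, it consumes the smallness condition \eqref{eq:alphalipbound}, which the paper does not need for this lemma (it reserves it for the H\"older-in-time estimate of Lemma \ref{lem:hoelderfixpointmeas}, where essentially your computation reappears). Your calibration does close: with $\Lip(f_k)\le \deg(f_k)\,\delta^{\deg(f_k)-1}<k(\delta+1)^k$ one gets $aS(a)<(\delta+1)\left(1-(\delta+1)/a\right)^{-2}$, which is $\le 3+3\delta$ once $a\ge(\delta+1)/(1-1/\sqrt{3})$, and then \eqref{eq:alphalipbound} gives a contraction factor strictly below $1$. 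As a bonus, the contraction gives convergence of Picard iterates, which the Schauder argument does not.

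One step deserves a flag. Assumption \ref{item:contalpha}-\ref{item:contalpha4} is stated as an \emph{integrated} bound, $\int_{\overline{\Omega}}|\alpha^*(t,x,p^1;\nu^1)-\alpha^*(t,x,p^2;\nu^2)|\,dx\le L(\dots)$, for fixed vectors $p^1,p^2$ and with respect to Lebesgue measure, whereas you need a bound on $\int_{\overline{\Omega}}|\alpha^*(t,x,p(x);\mu^1)-\alpha^*(t,x,p(x);\mu^2)|\,dm(x)$ for a general $m\in\mathcal{M}_1(\overline{\Omega})$ (which could be singular, e.g.\ a Dirac mass) and with $p(x)$ varying. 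The literal $L^1(dx)$ form does not yield this; you need the pointwise reading $|\alpha^*(t,x,p(x);\mu^1)-\alpha^*(t,x,p(x);\mu^2)|\le L\left|\int\xi\,d(\mu^1-\mu^2)\right|$ together with $m(\overline{\Omega})\le 1$. You acknowledge this by invoking ``the pointwise version,'' and in fairness the paper itself uses the assumption pointwise in Lemmas \ref{lem:hoelderfixpointmeas} and \ref{le:Hstability}, so your proof is valid under the reading the authors evidently intend --- but as written it rests on a strengthening of \ref{item:contalpha}-\ref{item:contalpha4} that you should state explicitly.
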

\begin{proof}
	Let us first show that the map $\Phi$ is well-defined, that is $\Phi[p,m](\mu) \in \mathcal{M}_1(\overline{\Omega}\times A)$. We have
	\begin{align*}
	\int_{\overline{\Omega}\times A} d\Phi[p,m](\mu)(x,\alpha) = \int_{\R^d} \mathbbm{1}_{\overline{\Omega}\times A}(x,\alpha^*(t,x,p(x);\mu))m(x)dx = \int_{\overline{\Omega}}m(x)dx \leq 1.
	\end{align*}
	Since $\mathcal{M}_1(\overline{\Omega}\times A)$ is trivially convex and compact with respect to the weak$^*$ convergence of measures (by the Banach-Alaoglu theorem), to show that $\Phi[p,m]$ has a fixed point via the Schauder fixed point theorem we just need to prove that $\Phi[p,m]$ is continuous. To this end, let $(\mu_n)_{n\in\N}$ be a sequence in $\mathcal{M}_1(\overline{\Omega}\times A)$ weak$^*$ converging to $\mu \in \mathcal{M}_1(\overline{\Omega}\times A)$. Then, by using Assumption \ref{item:contalpha}-\ref{item:contalpha4}, for every $\eta\in\mathcal{C}_b(\overline{\Omega}\times A;\R)$ we get
	\begin{align*}
	\lim_{n\rightarrow\infty}\int_{\overline{\Omega}\times A} &\eta(x,\alpha)d\Phi[p,m](\mu_n)(x,\alpha) =\\
	& = \lim_{n\rightarrow\infty}\int_{\overline{\Omega}} \eta(x,\alpha^*(t,x,p(x);\mu_n))m(x)dx\\
	& = \int_{\overline{\Omega}} \eta(x,\alpha^*(t,x,p(x);\mu))m(x)dx\\
	& = \int_{\overline{\Omega}\times A} \eta(x,\alpha)d\Phi[p,m](\mu)(x,\alpha).
	\end{align*}
	This shows existence. To prove uniqueness, assume that $\mu_1$ and $\mu_2$ are two fixed points of $\Phi[p,m]$. Then by assumption \ref{item:monotoneL}-\ref{item:monotoneL1} we get
	\begin{align*}
	0 & \leq\int_{\overline{\Omega}\times A} (\mathcal{L}(t,x,\alpha;\mu_1) - \mathcal{L}(t,x,\alpha;\mu_2))d(\mu_1 - \mu_2)(x,\alpha)\\
	& = \int_{\overline{\Omega}} (\mathcal{L}(t,x,\alpha^*(t,x,p(x);\mu_1);\mu_1) - \mathcal{L}(t,x,\alpha^*(t,x,p(x);\mu_1);\mu_2))m(x)dx \\
	& \quad \quad - \int_{\overline{\Omega}} (\mathcal{L}(t,x,\alpha^*(t,x,p(x);\mu_2);\mu_1) - \mathcal{L}(t,x,\alpha^*(t,x,p(x);\mu_2);\mu_2))m(x)dx.
	\end{align*}
	If we add and subtract the terms $p(x)\cdot b(t,x,\alpha^*(t,x,p(x);\mu_i))$ for $i = 1,2$ and we rearrange the expression, we get
	\begin{align*}
	0 & \leq \int_{\overline{\Omega}} \Big(p(x)\cdot b(t,x,\alpha^*(t,x,p(x);\mu_1)) + \mathcal{L}(t,x,\alpha^*(t,x,p(x);\mu_1);\mu_1) \\
	& \quad \quad \quad \quad- p(x)\cdot b(t,x,\alpha^*(t,x,p(x);\mu_1))- \mathcal{L}(t,x,\alpha^*(t,x,p(x);\mu_1);\mu_2)\Big)m(x)dx \\
	& \quad \quad + \int_{\overline{\Omega}} \Big(p(x)\cdot b(t,x,\alpha^*(t,x,p(x);\mu_2)) + \mathcal{L}(t,x,\alpha^*(t,x,p(x);\mu_2);\mu_2) \\
	&\quad \quad \quad \quad- p(x)\cdot b(t,x,\alpha^*(t,x,p(x);\mu_2)) - \mathcal{L}(t,x,\alpha^*(t,x,p(x);\mu_2);\mu_1)\Big)m(x)dx.
	\end{align*}
	However, since $\alpha^*(t,x,p(x);\mu_i)$ is the maximum of $- p(x)\cdot b(t,x,\alpha;\mu_i) - \mathcal{L}(t,x,\alpha;\mu_i)$ by Assumption \ref{item:contalpha}, then all the integrands are nonpositive, which implies that $\alpha^*(t,x,p(x);\mu_1) = \alpha^*(t,x,p(x);\mu_2)$ $m$-almost everywhere, which in turn implies $\Phi[p,m](\mu_1) = \Phi[p,m](\mu_2)$, and hence $\mu_1 = \mu_2$.
\end{proof}

Next we show the stability of the fixed-point map $\Gamma.$

\begin{lemma}\label{lem:stability}
	Let $(m_n)_{n\in\N}$ be a sequence weakly$^*$ converging in $\mathcal{M}_1(\overline{\Omega})$ to $m$, and let $(p_n)_{n\in\N}$ be a sequence converging a.e. in $L^{\infty}(\overline{\Omega};\R^d)$. Then $\Gamma(p_n,m_n)$ converges weakly$^*$ in $\mathcal{M}_1(\overline{\Omega}\times A)$ to $\Gamma(p,m)$.
\end{lemma}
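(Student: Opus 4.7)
The plan is to combine weak$^*$ compactness of $\mathcal{M}_1(\overline{\Omega}\times A)$ with the uniqueness already established in Lemma~\ref{lem:fixpointmeas}. Setting $\mu_n \defin \Gamma(p_n,m_n)$, the Banach--Alaoglu theorem produces a subsequence (not relabeled) along which $\mu_n \rightharpoonup^* \mu^*$ for some $\mu^* \in \mathcal{M}_1(\overline{\Omega}\times A)$. The whole task then reduces to showing that this cluster point $\mu^*$ satisfies the fixed-point identity $\mu^* = (\Id,\alpha^*(t,\cdot,p(\cdot);\mu^*))_{\sharp}\, m$: once that is verified, the uniqueness statement in Lemma~\ref{lem:fixpointmeas} forces $\mu^* = \Gamma(p,m)$, and a standard subsequence-of-a-subsequence argument propagates weak$^*$ convergence to the full sequence.

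To verify this identity I would test it against an arbitrary $\eta \in \mathcal{C}_b(\overline{\Omega}\times A;\R)$. Writing the fixed-point relation at level $n$ gives $\int \eta\, d\mu_n = \int_{\overline{\Omega}} \eta(x,\alpha^*(t,x,p_n(x);\mu_n))\, dm_n(x)$, whose left-hand side converges to $\int \eta\, d\mu^*$ by weak$^*$ convergence. The core of the argument is then to control the inner composition on the right. To this end I would invoke Assumption~\ref{item:contalpha}-\ref{item:contalpha4} with $\nu^1=\mu_n$ and $\nu^2=\mu^*$, which gives an $L^1(\overline{\Omega})$-estimate on $\alpha^*(t,\cdot,p_n(\cdot);\mu_n) - \alpha^*(t,\cdot,p(\cdot);\mu^*)$ by $|p_n-p|$ (vanishing thanks to the a.e.\ convergence $p_n \to p$ together with the uniform $L^{\infty}$-bound and dominated convergence) plus $\bigl|\int \xi\, d(\mu_n-\mu^*)\bigr|$, which vanishes by Lemma~\ref{le:techweakconv} applied to $h=\xi$ since $d_a(\mu_n,\mu^*)\to 0$.

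The main obstacle is that weak$^*$ convergence of the outer measures $m_n$ in $\mathcal{M}_1(\overline{\Omega})$ only permits testing against \emph{continuous} functions on $\overline{\Omega}$, whereas the candidate limit integrand $x\mapsto \eta(x,\alpha^*(t,x,p(x);\mu^*))$ need not be continuous since $p$ is merely in $L^{\infty}$. To get around this I would approximate $p$ in $L^{\infty}$ by a sequence of continuous functions $p^\eps$; by classical continuity of the argmax in \eqref{eq:alphamaximum} (which has a unique maximizer by Assumption~\ref{item:contalpha}), the composite $x\mapsto \alpha^*(t,x,p^\eps(x);\mu^*)$ is then continuous, so $\eta(\cdot,\alpha^*(t,\cdot,p^\eps(\cdot);\mu^*))$ becomes a legitimate test function for the weak$^*$ convergence of $m_n$. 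The $L^1$-Lipschitz estimate from Assumption~\ref{item:contalpha}-\ref{item:contalpha4} controls the resulting $\eps$-error uniformly in $n$, so letting $n\to\infty$ first and then $\eps\to 0$ completes the identification $\mu^*=(\Id,\alpha^*(t,\cdot,p(\cdot);\mu^*))_{\sharp}\, m$ and closes the proof.
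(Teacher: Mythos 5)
Your proposal follows essentially the same route as the paper's proof: extract a weak$^*$ cluster point of $\mu_n=\Gamma(p_n,m_n)$ by compactness, identify it as a fixed point of $\Phi[p,m]$ by testing against functions in $\mathcal{C}_b(\overline{\Omega}\times A;\R)$ and invoking Assumption \ref{item:contalpha}-\ref{item:contalpha4}, and conclude via the uniqueness statement of Lemma \ref{lem:fixpointmeas} together with the subsequence argument. Your additional approximation of $p$ by continuous functions is a refinement of a step the paper handles by directly invoking dominated convergence, but the overall strategy is identical.
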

\begin{proof}
	Denote by $\mu_n \defin \Gamma(p_n,m_n)$. By definition, $\mu_n = \Phi(p_n,m_n)(\mu_n)$ and, since $\mathcal{M}_1(\overline{\Omega}\times A)$ is weakly$^*$ compact, up to subsequences $(\mu_n)_{n\in\N}$ weak$^*$ converges to some $\mu$. We need to show that 
	$\mu = \Phi(p,m)(\mu)$.
	
	Notice that, by Assumption \ref{item:contalpha}-\ref{item:contalpha4} for every $\eta \in \mathcal{C}_b(\overline{\Omega}\times A;\R)$ we have
	\begin{align*}
	\lim_{n\rightarrow\infty}\eta(x,\alpha^*(t,x,p_n(x);\mu_n)) = \eta(x,\alpha^*(t,x,p(x);\mu))
	\end{align*}
	Therefore, if we invoke the dominated convergence theorem we get
	\begin{align*}
	\lim_{n\rightarrow\infty}\int_{\overline{\Omega}\times A}\eta(x,\alpha)d\Phi[p_n,m_n](\Gamma(p_n,m_n))(x,\alpha)&=\lim_{n\rightarrow\infty}\int_{\overline{\Omega}}\eta(x,\alpha^*(t,x,p_n(x);\Gamma(p_n,m_n)))m_n(x)dx \\
	&=\int_{\overline{\Omega}}\eta(x,\alpha^*(t,x,p(x);\mu))m(x)dx\\
	& = \int_{\overline{\Omega}\times A}\eta(x,\alpha)d\Phi[p,m](\mu)(x,\alpha),
	\end{align*}
	which implies that $(\Phi[p_n,m_n](\mu_n))_{n\in\N}$ weak$^*$ converges to $\Phi[p,m](\mu)$. But the fixed-point relationship $\mu_n = \Phi(p_n,m_n)(\mu_n)$ and the uniqueness of the weak$^*$ limit then imply $\mu = \Phi(p,m)(\mu)$, as desired.
\end{proof}

\subsection{A priori regularity estimates}

We shall now extend Lemma \ref{lem:fixpointmeas} to H\"older continuous measure-valued curves.

\begin{lemma}\label{lem:hoelderfixpointmeas}
	Let $P_0,M_0>0$ and let $p \in \Hoelder{\beta}{\beta/2}{[0,T];\R^d})$ and $m \in \Hoelder{\beta}{\beta/2}{[0,T];\R}$ be such that
	$$\|m\|_{\Hoelder{\beta}{\beta/2}{\overline{Q}_T;\R}} \leq M_0 \quad \text{ and } \quad \|p\|_{\Hoelder{\beta}{\beta/2}{\overline{Q}_T;\R^d}} \leq P_0,$$
	with $m_t\in\mathcal{M}_1(\overline{\Omega})$ for every $t\in[0,T]$.
	
	Then there exists a $C_0 \defin C_0(M_0,P_0) > 0$ and a metric on $\mathcal{M}_1(\overline{\Omega}\times A)$ that metrizes the weak$^*$ convergence such that the application $\Psi$ from the set
	\begin{align}\label{eq:compactsetmu}
	\left\{\mu \in \mathcal{C}^{\beta/2}([0,T];\mathcal{M}_1(\overline{\Omega}\times A)) \mid \|\mu\|_{\mathcal{C}^{\beta/2}([0,T];\mathcal{M}_1(\overline{\Omega}\times A))}\leq C_0\right\}
	\end{align}
	onto itself defined as
	\begin{align*}
	\Psi[p,m](\mu) \defin (\Phi[p_t,m_t](\mu_t))_{t \in [0,T]}
	\end{align*}
	has the measure-valued curve $(\Gamma(p_t,m_t))_{t\in[0,T]}$ as unique fixed point. In particular, $(\Gamma(p_t,m_t))_{t\in[0,T]}$ is H\"older continuous in time with exponent $\beta/2$ with respect to such metric.
\end{lemma}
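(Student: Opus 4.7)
My strategy is to leverage Lemma \ref{lem:fixpointmeas} for the pointwise existence and uniqueness of $\mu^*_t := \Gamma(p_t,m_t)$ and then to establish H\"older-in-$t$ regularity of the curve $t \mapsto \mu^*_t$. Indeed, any fixed point $\mu$ of $\Psi[p,m]$ must satisfy $\mu_t = \Phi[p_t,m_t](\mu_t)$ for every $t$, so by Lemma \ref{lem:fixpointmeas} it must coincide with the curve $t \mapsto \mu^*_t$; uniqueness of the fixed point is therefore automatic. The whole proof then reduces to showing $\mu^* \in K_{C_0}$ for a suitable $C_0 = C_0(M_0,P_0)$, which will simultaneously show that $\Psi[p,m]$ maps the set from \eqref{eq:compactsetmu} into itself. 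We fix a parameter $a > 0$ in the metric $d_a$ to be specified at the end of the argument.

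For the H\"older-in-$t$ estimate I would exploit the pushforward representation $\mu^*_r = (\Id,\alpha^*(r,\cdot,p_r(\cdot);\mu^*_r))_\sharp m_r$. For any polynomial $f_k$ in the basis defining $d_a$,
\begin{align*}
\int f_k \, d(\mu^*_t - \mu^*_s) = \int f_k(x,\alpha^*_t(x))\bigl(m_t(x)-m_s(x)\bigr)dx + \int [f_k(x,\alpha^*_t(x)) - f_k(x,\alpha^*_s(x))]\, m_s(x)\, dx,
\end{align*}
where $\alpha^*_r(x) := \alpha^*(r,x,p_r(x);\mu^*_r)$. The first term is bounded by $\|f_k\|_\infty |\Omega|\, M_0 |t-s|^{\beta/2}$ via the H\"older regularity of $m$. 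For the second, the Lipschitz continuity of $f_k$ in $\alpha$ together with $\|m_s\|_\infty \leq M_0$ reduces matters to estimating $\int |\alpha^*_t(x) - \alpha^*_s(x)| dx$. Adding and subtracting $\alpha^*(t,x,p_s(x);\mu^*_s)$ splits this into: (a) a piece bounded by assumption \ref{item:contalpha}-\ref{item:contalpha4} and Lemma \ref{le:techweakconv} by
$L\,\|p_t - p_s\|_\infty |\Omega| + L\, a(1+2\delta)\|\xi\|_\infty \, d_a(\mu^*_t,\mu^*_s),$
and (b) a piece bounded by $C(P_0)|t-s|^{\beta/2}$ via assumption \ref{item:contalpha}-\ref{item:contalpha1} applied with the constant-in-$t$ inputs $\tilde p(r,x)=p_s(x)$ and $\tilde\nu_r=\mu^*_s$, whose H\"older-in-$t$ norms vanish and whose remaining norms are controlled by $P_0$ and the compactness of $\mathcal{M}_1$.

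Summing over $k$ with the weights $a^{-k}$ (using $\frac{x}{1+x}\leq x$) yields an inequality of the shape
$$d_a(\mu^*_t,\mu^*_s) \leq C_1(M_0,P_0)|t-s|^{\beta/2} + C_2(M_0)\cdot L\, a(1+2\delta)\|\xi\|_\infty \cdot d_a(\mu^*_t,\mu^*_s).$$
The sharp threshold \eqref{eq:alphalipbound} on $L$, combined with a careful choice of the parameter $a$, makes the coefficient of $d_a(\mu^*_t,\mu^*_s)$ on the right strictly less than $1$. Absorbing, I conclude $d_a(\mu^*_t,\mu^*_s) \leq C_0(M_0,P_0)|t-s|^{\beta/2}$, which proves $\mu^* \in K_{C_0}$ and at the same time $\Psi[p,m](K_{C_0}) \subset K_{C_0}$.

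The principal obstacle is exactly the circularity in piece (a): because $\alpha^*_r$ depends on $\mu^*_r$ through the fixed-point relation, the estimate on $|\alpha^*_t - \alpha^*_s|$ unavoidably involves $d_a(\mu^*_t,\mu^*_s)$, the very quantity we are trying to control. The threshold \eqref{eq:alphalipbound} is engineered precisely so that closure is possible after selecting an appropriate $a$; without it, the argument cannot be bootstrapped. A secondary technical point is extracting from \ref{item:contalpha}-\ref{item:contalpha1} a quantitative H\"older-in-$t$ bound on $\alpha^*$ depending only on $P_0$, which is why we plug constant-in-$t$ data into the assumption so that the H\"older-in-$t$ norm of $\mu^*$ does not enter.
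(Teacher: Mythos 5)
Your argument is correct in substance but follows a genuinely different route from the paper's. The paper does not build the curve from the pointwise fixed points: it fixes an \emph{arbitrary} $\mu$ in the ball \eqref{eq:compactsetmu}, estimates $\lambda_\omega(\Phi[p_t,m_t](\mu_t),\Phi[p_s,m_s](\mu_s))$ by the same two-term decomposition you use, feeds in $\lambda_\omega(\mu_t,\mu_s)\le C_0|t-s|^{\beta/2}$ for the \emph{input} curve (so no absorption is needed), tunes $C_0$ and the weight $\omega$ of the metric so that the output constant is $C_0/2<C_0$, and then produces the fixed point by Schauder on the convex compact set of curves, identifying it with $(\Gamma(p_t,m_t))_t$ by pointwise uniqueness. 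You instead take the curve $t\mapsto\Gamma(p_t,m_t)$ as already furnished by Lemma \ref{lem:fixpointmeas} and prove its H\"older regularity directly by absorbing $d_a(\mu^*_t,\mu^*_s)$. In short: the paper pays with Schauder to avoid absorption, you pay with absorption to avoid Schauder; the two smallness conditions (your ``absorption coefficient $<1$'' and the paper's requirement on $\sum_k k(\delta+1)^{k(1-\omega)}La(1+2\delta)\|\xi\|_{\infty}$) are the same inequality. Your uniqueness argument and your remark that the self-mapping property follows from the same computation with $d_a(\mu_t,\mu_s)\le C_0|t-s|^{\beta/2}$ in place of absorption are both fine, and your extraction of a time-modulus for $\alpha^*$ from \ref{item:contalpha}-\ref{item:contalpha1} by freezing the data is at least as careful as the paper's bare introduction of the constant $L_1$.

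One constant deserves more care than your sketch gives it. By bounding the second piece with $\Lip(f_k)\,\|m_s\|_{\infty}\int|\alpha^*_t-\alpha^*_s|\,dx$ you let $M_0$ enter the absorption coefficient: it becomes $M_0\,L(1+2\delta)\|\xi\|_{\infty}\sum_k a^{1-k}\Lip(f_k)$, and with the crude bound $\Lip(f_k)<k(\delta+1)^k$ used in the paper the sum $\sum_k a^{1-k}\Lip(f_k)$ stays above $\delta+1$ for every admissible $a$, so \eqref{eq:alphalipbound} only guarantees a coefficient below $M_0/3$ --- not below $1$ once $M_0\ge 3$. The threshold \eqref{eq:alphalipbound} is calibrated to a mass factor of $1$, not $M_0$. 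Two repairs are available: either keep the sub-probability measure $m_s(x)\,dx$ attached to the increment of $\alpha^*$ (reading \ref{item:contalpha}-\ref{item:contalpha4} pointwise and using $m_s(\overline{\Omega})\le 1$, which is what the paper does in \eqref{eq:metricest}), so that no $M_0$ appears in the coefficient; or exploit that $f_1\equiv 1$ has zero Lipschitz constant, whence $a\sum_k a^{-k}\Lip(f_k)\rightarrow 0$ as $a\rightarrow\infty$ and the coefficient can still be pushed below $1$ by taking $a=a(M_0)$ large. With either fix your bootstrap closes and the proof is complete.
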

\begin{proof}
	As already argued in Section \ref{sec:assumption}, the metric
	\begin{align*}
	\lambda_{\omega}(\mu,\nu) \defin \sum^{\infty}_{k = 1} \frac{1}{{(\delta+1)}^{\omega k}}\frac{\left|\int_{\overline{\Omega}\times A}f_k(x,\alpha)d(\mu - \nu)(x,\alpha)\right|}{1 + \left|\int_{\overline{\Omega}\times A}f_k(x,\alpha)d(\mu - \nu)(x,\alpha)\right|}.
	\end{align*}
	metrizes the weak$^*$ convergence over $\mathcal{M}_1(\overline{\Omega}\times A)$, where $(f_k)_{k\in\N^+}$ is a family of polynomials such that the degree of $f_k$ is precisely
	$$\deg(k) = \sup \left\{n\in\N\mid\sum^n_{i = 0}\binom{i+d-1}{d-1} \leq k \right\}.$$
	Notice the trivial estimate $\deg(k)\leq k$ for any $k\in\N^+$. We shall prove that, for a proper choice of $C_0$ and $\omega$, the map $\Psi$ defines a contraction over the set \eqref{eq:compactsetmu} (where each $\mu$ is H\"older continuous with respect to the metric $\lambda_\omega$) by using the fact that
	\begin{align*}
	\lambda_{\omega}(\mu,\nu) \leq \sum^{\infty}_{k = 1} \frac{1}{{(\delta+1)}^{\omega k}}\left|\int_{\overline{\Omega}\times A}f_k(x,\alpha)d(\mu - \nu)(x,\alpha)\right|,
	\end{align*}
	and obtaining a bound from above for the right-hand side term.
	
	To choose $C_0$ and $\omega$, we start by first choosing $a > 0$ such that
	$$3(\delta+1)e < a < \frac{e}{La(1+2\delta)\|\xi\|_{\infty}},$$
	(which exists by the bound \eqref{eq:alphalipbound}). \mattia{The left-hand side of the inequality trivially implies
	\begin{align}\label{eq:a1}
	\log(a) > \log(3(\delta + 1))
	\end{align}
	and
	\begin{align}\label{eq:a2}
	a > 3(\delta + 1) e > e \Longrightarrow \log(a) > 1.
	\end{align}
	The right-hand side of the inequality instead implies
	\begin{align*}
	\frac{e}{L(1+2\delta)\|\xi\|_{\infty}} > a \Longrightarrow 1 > \frac{La(1+2\delta)\|\xi\|_{\infty}}{e}
	\end{align*}
	which together with \eqref{eq:a2} yields
	\begin{align}\label{eq:a3}
	\log(a) > \frac{La(1+2\delta)\|\xi\|_{\infty}}{e}.
	\end{align}
	Putting together \eqref{eq:a1} and \eqref{eq:a3} we arrive at}
	$$\log(a) > \log(3(\delta+1)) + \frac{La(1+2\delta)\|\xi\|_{\infty}}{e}.$$
	Now denote by $L_1$ the H\"older constant in \ref{item:contalpha}-\ref{item:contalpha1}. 
	Since for every $M_0, P_0 > 0$ it holds
	$$\lim_{C\rightarrow+\infty} \frac{L_1 + LP_0 + M_0 + La(1+2\delta)\|\xi\|_{\infty}C}{e C} = \frac{La(1+2\delta)\|\xi\|_{\infty}}{e},$$
	choose $C_0 \defin C(M_0,P_0)$ large enough such that
	$$\log(a) > \log(3(\delta+1)) + \frac{L_1 + LP_0 + M_0 + La(1+2\delta)\|\xi\|_{\infty}C_0}{e C_0}$$
	still holds true. It is easy to check that
	$$\frac{L_1 + LP_0 + M_0 + La(1+2\delta)\|\xi\|_{\infty}C_0}{e C_0} \geq \sup_{k\in\N^+}\frac{\log\left(\frac{k}{C_0}(L_1 + LP_0 + M_0 + La(1+2\delta)\|\xi\|_{\infty}C_0)\right)}{k},$$
	so that it actualy holds
	$$\log(a) > \log(3(\delta+1)) + \sup_{k\in\N^+}\frac{\log\left(\frac{k}{C_0}(L_1 + LP_0 + M_0 + La(1+2\delta)\|\xi\|_{\infty}C_0)\right)}{k}.$$
	We can therefore fix $\omega \defin \omega(M_0,P_0) > 0$ satisfying
	\begin{align}\label{eq:omegachoice}
	\frac{\log(a)}{\log(\delta+1)} > \omega > \sup_{k\in\N^+}\frac{k\log(3(\delta+1)) +\log\left(\frac{k}{C_0}(L_1 + LP_0 + M_0 + La(1+2\delta)\|\xi\|_{\infty}C_0)\right)}{k\log(\delta+1)}.
	\end{align}
	
	We therefore pass to bound from above the distance $\lambda_\omega$. Denote by $\overline{\mu}_t \defin \Phi[p_t,m_t](\mu_t)$, whose existence and uniqueness for every $t$ is guaranteed by the hypotheses on $p$ and $m$ and by Lemma \ref{lem:fixpointmeas}. We have
	\begin{align}\begin{split}\label{eq:metricest}
	\lambda_{\omega}(\overline{\mu}_t,\overline{\mu}_s) &\leq \sum^{\infty}_{k = 1} \frac{1}{{(\delta+1)}^{\omega k}}\left|\int_{\overline{\Omega}}(f_k(x,\alpha^*(t,x,p_t(x);\mu_t))m_t(x) - f_k(x,\alpha^*(s,x,p_s(x);\mu_s))m_s(x))dx\right|\\
	&= \sum^{\infty}_{k = 1} \frac{1}{{(\delta+1)}^{\omega k}}\int_{\overline{\Omega}}\left|f_k(x,\alpha^*(t,x,p_t(x);\mu_t)) - f_k(x,\alpha^*(s,x,p_s(x);\mu_s))\right||m_t(x)|dx \\
	&\quad \quad + \sum^{\infty}_{k = 1} \frac{1}{{(\delta+1)}^{\omega k}}\int_{\overline{\Omega}}\left|f_k(x,\alpha^*(t,x,p_s(x);\mu_t))\right|\left|m_t(x) - m_s(x)\right|dx.
	\end{split}
	\end{align}
	
	We now focus on the first term of the right-hand side of \eqref{eq:metricest}. 	Notice that, by Assumption \ref{item:contalpha}-\ref{item:contalpha4} and Lemma \ref{le:techweakconv} with $h = \xi$, $B = \overline{\Omega}\times A$ and our choice of $a$, we have the estimate
	\begin{align*}
	\int_{\overline{\Omega}}|\alpha^*(t,x,p_t(x);\mu_t)) - \alpha^*(t,x,p_t(x);\mu_s)|dx
	& \leq L\Big|\int_{\overline{\Omega}\times A} \xi(x,\alpha)d(\mu_t -\mu_s)(x,\alpha)\Big|\\
	& \leq La(1+2\delta)\|\xi\|_{\infty}d_a(\mu_t,\mu_s)\\
	& \leq La(1+2\delta)\|\xi\|_{\infty}\lambda_\omega(\mu_t,\mu_s),
	\end{align*}
	where $d_a \leq \lambda_\omega$ comes from $a > (\delta+1)^\omega$ (as implied by \eqref{eq:omegachoice}).
	
	Moreover, the family $(f_k)_{k\in\N}$ is a basis of Lipschitz continuous functions with
	$$\Lip(f_k;\overline{\Omega}\times A) \leq \deg(k)\delta^{\deg(k)-1}
	<k{(\delta+1)}^{k}.$$
	Taking these information into account we may write
	\begin{align*}
	\sum^{\infty}_{k = 1} \frac{1}{{(\delta+1)}^{\omega k}}&\int_{\overline{\Omega}}\left|f_k(x,\alpha^*(t,x,p_t(x);\mu_t)) - f_k(x,\alpha^*(s,x,p_s(x);\mu_s))\right||m_t(x)|dx \\
	& \leq \sum^{\infty}_{k = 1} k {(\delta+1)}^{k(1-\omega)}\int_{\overline{\Omega}}\Big|\alpha^*(t,x,p_t(x);\mu_t)) - \alpha^*(s,x,p_s(x);\mu_s)\Big||m_t(x)|dx\\
	&\leq \sum^{\infty}_{k = 1} k {(\delta+1)}^{k(1-\omega)}\int_{\overline{\Omega}}\Big(L_1|t-s|^{\beta/2} + L|p(t,x) - p(s,x)| +\\
	&\quad\quad\quad\quad\quad\quad\quad\quad\quad\quad\quad\quad\quad\quad\quad+ La(1+2\delta)\|\xi\|_{\infty}\lambda_\omega(\mu_t,\mu_s)\Big)|m_t(x)|dx\\
	&\leq \sum^{\infty}_{k = 1} k {(\delta+1)}^{k(1-\omega)}\left(L_1 + L\|p\|_{\Hoelder{\beta}{\beta/2}{\overline{Q}_T;\R^d}}+La(1+2\delta)\|\xi\|_{\infty}\|\mu\|_\beta\right)|t-s|^{\beta/2},
	\end{align*}
	where, as a shorthand notation, we denoted by $\|\mu\|_\beta \defin \|\mu\|_{\mathcal{C}^{\beta/2}([0,T];\mathcal{M}_1(\overline{\Omega}\times A))}$. Notice that we used the H\"older continuity in time of the functions $p$ and $\mu$.
	
	Concerning the second term on the right-hand side of \eqref{eq:metricest}, since it holds that
	$$\sup_{(x,\alpha) \in \overline{\Omega}\times A} |f_k(x,\alpha)| \leq {\delta}^{\deg(k)}< k{(\delta+1)}^{k},$$
	then we also have
	\begin{align*}
	\sum^{\infty}_{k = 1} & \frac{1}{{(\delta+1)}^{\omega k}}\int_{\overline{\Omega}}\left|f_k(x,\alpha^*(t,x,p_s(x);\mu_t))\right|\left|m_t(x) - m_s(x)\right|dx\\
	&\leq\sum^{\infty}_{k = 1} {k(\delta+1)}^{k(1-\omega)}\|m\|_{\Hoelder{\beta}{\beta/2}{\overline{Q}_T;\R}}|t-s|^{\beta/2}.
	\end{align*}
	In conclusion, using $\|m\|_{\Hoelder{\beta}{\beta/2}{\overline{Q}_T;\R}} \leq M_0$, $\|p\|_{\Hoelder{\beta}{\beta/2}{\overline{Q}_T;\R^d}} \leq P_0$ and $\|\mu\|_\beta \leq C_0$, we arrive at
	\begin{align*}
	\lambda_{\omega}(\overline{\mu}_t,\overline{\mu}_s) &\leq \sum^{\infty}_{k = 1}{k(\delta+1)}^{k(1-\omega)}\left(L_1 + LP_0+ M_0 + La(1+2\delta)\|\xi\|_{\infty}C_0\right)|t-s|^{\beta/2}.
	\end{align*}
	The choice of $\omega$ in \eqref{eq:omegachoice} then yields
	\begin{align*}
	\lambda_{\omega}(\overline{\mu}_t,\overline{\mu}_s) \leq \sum^{\infty}_{k = 1}\frac{C_0}{3^k}|t-s|^{\beta/2}= \frac{C_0}{2}|t-s|^{\beta/2}<C_0|t-s|^{\beta/2}.
	\end{align*}
	This, in turn, implies that $\Psi[p,m]$ is an application from the set
	\begin{align*}
	\left\{\mu \in \mathcal{C}^{\beta/2}([0,T];\mathcal{M}_1(\overline{\Omega}\times A)) \mid \|\mu\|_\beta\leq C_0\right\}
	\end{align*}
	onto itself. Notice that the above set is convex and compact in $\mathcal{C}^{\beta/2}([0,T];\mathcal{M}_1(\overline{\Omega}\times A))$ (by the Ascoli-Arzel\'a theorem). Moreover $\Psi[p,m]$ is a continuous application, since the map $\Phi[p_t,m_t]$ is continuous with respect to the weak$^*$ convergence of measures by Lemma \ref{lem:fixpointmeas}. We can therefore invoke the Schauder fixed point theorem to infer the existence of a fixed point for $\Psi[p,m]$. Uniqueness, instead,  follows from uniqueness of the fixed point for $\Phi[p_t,m_t]$ for every $t\in[0,T]$.
\end{proof}

Next, we extend the stability result of Lemma \ref{lem:stability} to the map $\Psi$.

\begin{lemma}\label{lem:hoelderstability}
	Assume that the sequence
	$$(p_n,m_n)_{n\in\N}\subset\Hoelder{\beta}{\beta/2}{\overline{Q}_T;\R^d}\times\Hoelder{\beta}{\beta/2}{\overline{Q}_T;\R} \quad \text{ with } m_n \in \mathcal{M}_1(\overline{\Omega})$$
	converges to $(p,m)_{n\in\N}\in\Hoelder{\beta}{\beta/2}{\overline{Q}_T;\R^d}\times\Hoelder{\beta}{\beta/2}{\overline{Q}_T;\R}$ in the product topology, and consider the family of functions $\vartheta_n(t) = \Gamma(p_{n,t},m_{n,t})$, as well as $\vartheta(t)\defin \Gamma(p_{t},m_{t})$. Then $(\vartheta_n)_{n\in\N}$ converges uniformly to $\vartheta$.
\end{lemma}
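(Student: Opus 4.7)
The plan is to combine the quantitative H\"older bound provided by Lemma \ref{lem:hoelderfixpointmeas} (which gives subsequential compactness) with the pointwise-in-time stability statement of Lemma \ref{lem:stability} (which identifies the limit). Since $(p_n,m_n)_{n\in\N}$ converges in the H\"older product topology, it is bounded: there exist $P_0, M_0 > 0$ such that $\|p_n\|_{\Hoelder{\beta}{\beta/2}{\overline{Q}_T;\R^d}} \leq P_0$ and $\|m_n\|_{\Hoelder{\beta}{\beta/2}{\overline{Q}_T;\R}} \leq M_0$ uniformly in $n$ (and the same bounds hold for the limit $(p,m)$). I will invoke Lemma \ref{lem:hoelderfixpointmeas} with these bounds to obtain a constant $C_0 = C_0(M_0,P_0)$ such that both $\vartheta_n$ and $\vartheta$ lie in the set
$$\mathcal{K}_0 \defin \left\{ \mu \in \mathcal{C}^{\beta/2}([0,T];\mathcal{M}_1(\overline{\Omega}\times A)) \;\middle|\; \|\mu\|_{\mathcal{C}^{\beta/2}([0,T];\mathcal{M}_1(\overline{\Omega}\times A))} \leq C_0 \right\}.$$

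Next I would observe that $\mathcal{K}_0$ is relatively compact in $\mathcal{C}([0,T];(\mathcal{M}_1(\overline{\Omega}\times A),\lambda_\omega))$ by the Ascoli-Arzel\`a theorem, exactly as exploited in the proof of Lemma \ref{lem:hoelderfixpointmeas} (equi-continuity in time is encoded in $\|\cdot\|_{\mathcal{C}^{\beta/2}} \leq C_0$ and pointwise relative compactness comes from weak$^*$ compactness of $\mathcal{M}_1(\overline{\Omega}\times A)$). Therefore, along any subsequence, I can extract a further subsequence (not relabeled) such that $\vartheta_n \to \vartheta^*$ uniformly on $[0,T]$ for some $\vartheta^* \in \mathcal{K}_0$.

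To identify $\vartheta^* = \vartheta$, I would pass to a fixed $t\in[0,T]$. H\"older convergence of $(p_n,m_n)$ implies in particular uniform convergence on $\overline{Q}_T$, so $p_{n,t}\to p_t$ in $L^\infty(\overline{\Omega};\R^d)$ (hence a.e.) and $m_{n,t}\to m_t$ uniformly on $\overline{\Omega}$, which yields weak$^*$ convergence in $\mathcal{M}_1(\overline{\Omega})$ against any continuous test function. Lemma \ref{lem:stability} then gives $\vartheta_n(t) = \Gamma(p_{n,t},m_{n,t}) \to \Gamma(p_t,m_t) = \vartheta(t)$ weakly$^*$, which combined with uniform convergence along the subsequence to $\vartheta^*$ forces $\vartheta^*(t) = \vartheta(t)$ for every $t \in [0,T]$.

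Finally, a standard subsequence-of-subsequences argument upgrades the extraction above to convergence of the whole sequence: every subsequence of $(\vartheta_n)_{n\in\N}$ admits a further subsequence converging uniformly to $\vartheta$, hence the full sequence does so too. The step I would expect to be the most delicate is the verification that the H\"older convergence of $m_n$ really transfers into the hypothesis of Lemma \ref{lem:stability} (weak$^*$ convergence of the time-slices viewed as elements of $\mathcal{M}_1(\overline{\Omega})$), but this reduces to the elementary observation that uniform convergence of non-negative densities of bounded mass implies weak$^*$ convergence of the induced measures.
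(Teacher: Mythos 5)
Your proposal is correct and follows essentially the same route as the paper: uniform H\"older bounds from the convergence hypothesis, Lemma \ref{lem:hoelderfixpointmeas} to place all the $\vartheta_n$ (and $\vartheta$) in a common Ascoli--Arzel\`a-compact set, and Lemma \ref{lem:stability} to identify the pointwise limit. You are in fact slightly more careful than the paper in two places — the explicit subsequence-of-subsequences step to upgrade subsequential convergence to convergence of the full sequence, and the verification that uniform convergence of $m_{n,t}$ yields the weak$^*$ convergence required by Lemma \ref{lem:stability} — but the argument is the same.
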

\begin{proof}
	Since the sequences $(p_n)_{n\in\N}$ and $(m_n)_{n\in\N}$ are convergent, they are bounded in their respective $\beta$-H\"older norm by two positive constants $P_0$ and $M_0$. Then we can take a uniform $C_0$ and $\omega$ for the entire sequence by using such $P_0$ and $M_0$ in Lemma \ref{lem:hoelderfixpointmeas}. This implies that the metric $\lambda_\omega$ on $\mathcal{M}_1(\overline{\Omega}\times A)$ of the previous result is such that it holds
	\begin{align*}
	(\mu_n)_{n\in\N}\subset Z\defin\left\{\mu \in \mathcal{C}^{\beta/2}([0,T];\mathcal{M}_1(\overline{\Omega}\times A)) \mid \|\mu\|_{\mathcal{C}^{\beta/2}([0,T];\mathcal{M}_1(\overline{\Omega}\times A))}\leq C_0\right\}.
	\end{align*}
	Since the set $Z$ is compact by the Ascoli-Arzel\'a theorem, we can extract from $(\mu_n)_{n\in\N}$ a convergent subsequence to some $\mu \in Z$ and the convergence is uniform in time, that is
	$$\lambda_\omega(\mu_{n,t},\mu_t) \rightarrow 0 \quad \text{ as } \quad n\rightarrow\infty \quad \text{ uniformly in } t.$$
	%
	On the other hand, we know from Lemma \ref{lem:stability} that the sequence $(\Gamma(p_{n,t},m_{n,t}))_{t\in[0,T]}$ converges pointwise to $(\Gamma(p_t,m_t))_{t\in[0,T]} \in Z$, therefore by the uniqueness of the pointwise limit we have
	$$\mu_t = \Gamma(p_t,m_t) \quad \text{ for every } t\in[0,T].$$
	which in turn implies that $(\Gamma(p_{n,t},m_{n,t}))_{t\in[0,T]}$ converges to $(\Gamma(p_t,m_t))_{t\in[0,T]}$ uniformly. 
\end{proof}

Notice that our smoothness assumptions on $b$ and $\mathcal{L}$ transfer easily to the Hamiltonian of the system, as the following technical result shows.

\begin{proposition}\label{prop:holderHamilton}
	The Hamiltonian $H$ satisfies the following statement: let $p \in \Hoelder{\beta}{\beta/2}{\overline{Q}_T;\R^d}$ and let $m \in \Hoelder{\beta}{\beta/2}{\overline{Q}_T;\R}$ with $m \in \mathcal{M}_1(\overline{\Omega})$, then it holds
	$$H(t,x,p(t,x);\Gamma(p_t,m_t)) \in \Hoelder{\beta}{\beta/2}{\overline{Q}_T;\R},$$
	as well as
	$$D_p H(t,x,p(t,x);\Gamma(p_t,m_t)) \in \Hoelder{\beta}{\beta/2}{\overline{Q}_T;\R^d}.$$
\end{proposition}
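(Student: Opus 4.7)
The plan is to reduce both claims to direct applications of Assumptions \ref{item:inverseb}, \ref{item:monotoneL}, \ref{item:contalpha}, together with the representation of $H$ (and of $D_p H$) at the unique maximizer furnished by \ref{item:contalpha} and the existence of the Hölder-in-time measure curve $t\mapsto \Gamma(p_t,m_t)$ provided by Lemma \ref{lem:hoelderfixpointmeas}.

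First I would set $\mu_t \defin \Gamma(p_t,m_t)$. By Proposition \ref{prop:measfp}-style reasoning we already have $m_t\in\mathcal{M}_1(\overline{\Omega})$ for each $t$, so the hypotheses of Lemma \ref{lem:hoelderfixpointmeas} apply, giving $\mu\in\mathcal{C}^{\beta/2}([0,T];\mathcal{M}_1(\overline{\Omega}\times A))$. Then Assumption \ref{item:contalpha}-\ref{item:contalpha1} applied to $p\in\Hoelder{\beta}{\beta/2}{\overline{Q}_T;\R^d}$ and this $\mu$ yields that the composite
\[
\alpha(t,x)\defin \alpha^*(t,x,p(t,x);\mu_t)
\]
belongs to $\Hoelder{\beta}{\beta/2}{\overline{Q}_T;A}$. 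Consequently, Assumption \ref{item:inverseb}-\ref{item:inverseb2} gives $b(t,x,\alpha(t,x);\mu_t)\in\Hoelder{\beta}{\beta/2}{\overline{Q}_T;\R^d}$ and Assumption \ref{item:monotoneL}-\ref{item:monotoneL2} gives $\mathcal{L}(t,x,\alpha(t,x);\mu_t)\in\Hoelder{\beta}{\beta/2}{\overline{Q}_T;\R}$.

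For the first conclusion, I would invoke the uniqueness of the maximizer in \ref{item:contalpha} to write, from \eqref{def:hamiltonian},
\[
H(t,x,p(t,x);\mu_t)= -p(t,x)\cdot b(t,x,\alpha(t,x);\mu_t) - \mathcal{L}(t,x,\alpha(t,x);\mu_t),
\]
and then observe that the Hölder space $\Hoelder{\beta}{\beta/2}{\overline{Q}_T;\R}$ is an algebra (pointwise products of Hölder functions are Hölder with the standard product estimate on norms), so that the right-hand side lies in the desired space. For the second conclusion, I would apply Danskin's envelope theorem to the parametrized sup
\[
H(t,x,p;\nu)=\sup_{\alpha\in A}\bigl\{-p\cdot b(t,x,\alpha;\nu)-\mathcal{L}(t,x,\alpha;\nu)\bigr\}.
\]
Because the integrand is smooth in $p$ (in fact affine in $p$) and the maximum is attained at the \emph{unique} point $\alpha^*(t,x,p;\nu)$ by \ref{item:contalpha}, Danskin's theorem gives
\[
D_p H(t,x,p;\nu)=-b\bigl(t,x,\alpha^*(t,x,p;\nu);\nu\bigr).
\]
Evaluated along our curves this is exactly $-b(t,x,\alpha(t,x);\mu_t)$, which was already shown to lie in $\Hoelder{\beta}{\beta/2}{\overline{Q}_T;\R^d}$.

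The only subtle step is the invocation of Danskin's formula when $\alpha^*$ can hit $\partial A$: this is precisely where the uniqueness clause in \ref{item:contalpha} and the smoothness in $p$ of the integrand (which is affine in $p$) are used, so no interior first-order condition is needed. Everything else is a chaining of the Hölder-stability assumptions (A4)(ii), (A5)(ii), (A6)(i) with the measure-curve Hölder regularity coming from Lemma \ref{lem:hoelderfixpointmeas}; I expect no serious obstacle, only careful bookkeeping of the Hölder norms through the product structure.
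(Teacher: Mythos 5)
Your proposal is correct and follows essentially the same route as the paper's proof: represent $H$ and $D_pH$ via the unique maximizer $\alpha^*$ from \ref{item:contalpha}, obtain the H\"older-in-time regularity of $\Gamma(p_t,m_t)$ from Lemma \ref{lem:hoelderfixpointmeas}, and then chain the H\"older-preservation clauses of \ref{item:inverseb}, \ref{item:monotoneL} and \ref{item:contalpha}. Your version merely makes explicit two points the paper leaves implicit (the envelope identity $D_pH=-b(\cdot,\alpha^*;\cdot)$ and the algebra property of parabolic H\"older spaces for the product $p\cdot b$), and note that $m_t\in\mathcal{M}_1(\overline{\Omega})$ is already a hypothesis of the statement, so no Proposition \ref{prop:measfp}-type argument is needed there.
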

\begin{proof}
	By Assumption \ref{item:contalpha}, we know that there exists an $\alpha^*:[0,T]\times\R^d\times\R^d\times\mathcal{M}_1(\overline{\Omega}\times A)\rightarrow\R$ for which
	\begin{align}\label{eq:Hrep}
	H(t,x,p;\nu) = -p\cdot b(t,x,\alpha^*(t,x,p;\nu);\nu) - \mathcal{L}(t,x,\alpha^*(t,x,p;\nu);\nu)
	\end{align}
	holds, which in turn implies
	\begin{align}\label{eq:DpHrep}
	D_p H(t,x,p;\nu) = -b(t,x,\alpha^*(t,x,p;\nu);\nu).
	\end{align}
	By Lemma \ref{lem:hoelderfixpointmeas} we have that $\Gamma(p_t,m_t) \in \mathcal{C}^{\beta/2}([0,T];\mathcal{M}_1(\overline{\Omega}\times A))$. The thesis then follows from Assumptions \mattia{\ref{item:inverseb}-\ref{item:inverseb2},  \ref{item:monotoneL}-\ref{item:monotoneL2}} and  \ref{item:contalpha}-\ref{item:contalpha1}.
\end{proof}

We can now show that the stability of the map $\Gamma$ easily translates into stability for the functions $H$ and $D_p H$.

\begin{lemma}\label{le:Hstability}
	Assume that the sequence
	$$(p_n,m_n)_{n\in\N}\subset\Hoelder{\beta}{\beta/2}{[0,T];\R^d}\times \Hoelder{\beta}{\beta/2}{[0,T];\R} \quad \text{ with } m_n \in \mathcal{M}_1(\overline{\Omega})$$
	converges to $(p,m)_{n\in\N}\in\Hoelder{\beta}{\beta/2}{[0,T];\R^d}\times \Hoelder{\beta}{\beta/2}{[0,T];\R}$ in the product topology. Then for $n\rightarrow\infty$ it holds	
	\begin{align*}
	H(t,x,p_{n,t}(x);\Gamma(p_{n,t},m_{n,t})) &\rightarrow H(t,x,p_{t}(x);\Gamma(p_{t},m_{t})) \quad \text{ uniformly,}
	\end{align*} 
	as well as
	\begin{align*}
	D_p H(t,x,p_{n,t}(x);\Gamma(p_{n,t},m_{n,t})) &\rightarrow D_p H(t,x,p_{t}(x);\Gamma(p_{t},m_{t}))  \quad \text{ uniformly.}
	\end{align*} 
\end{lemma}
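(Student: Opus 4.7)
The plan is to combine the H\"older compactness furnished by Proposition~\ref{prop:holderHamilton} and Assumption~\ref{item:hoelderH} with the weak$^*$ stability of $\Gamma$ from Lemma~\ref{lem:hoelderstability}, working through the explicit representations
\begin{align*}
H(t,x,p;\nu) &= -p\cdot b(t,x,\alpha^*(t,x,p;\nu);\nu) - \mathcal{L}(t,x,\alpha^*(t,x,p;\nu);\nu),\\
D_pH(t,x,p;\nu) &= -b(t,x,\alpha^*(t,x,p;\nu);\nu).
\end{align*}
Since $(p_n,m_n)\to(p,m)$ in the product $\beta$-H\"older topology, the H\"older norms of $p_n$ and $m_n$ are uniformly bounded, so Assumption~\ref{item:hoelderH} implies that the sequences $H_n(t,x)\defin H(t,x,p_n(t,x);\Gamma(p_{n,t},m_{n,t}))$ and $(D_pH)_n$ are uniformly bounded in $\Hoelder{\beta}{\beta/2}{\overline{Q}_T;\R}$ and $\Hoelder{\beta}{\beta/2}{\overline{Q}_T;\R^d}$ respectively. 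The Arzel\`a-Ascoli theorem then yields uniformly convergent subsequences; it only remains to identify every subsequential limit with the stated target, after which uniqueness of the limit will promote the convergence to the full sequence.

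To identify the limits I would argue pointwise. Fix $(t_0,x_0)\in\overline{Q}_T$: uniform convergence $p_n\to p$ (from H\"older convergence) gives $p_n(t_0,x_0)\to p(t_0,x_0)$, while Lemma~\ref{lem:hoelderstability} provides weak$^*$-convergence $\Gamma(p_{n,t_0},m_{n,t_0})\to\Gamma(p_{t_0},m_{t_0})$ uniformly in $t_0$. Via the representations, the problem reduces to showing joint continuity of the composed maximizer $\alpha^*_n(t,x)\defin\alpha^*(t,x,p_n(t,x);\Gamma(p_n,m_n)_t)$ at $(p(t,x),\Gamma(p,m)_t)$ in the Euclidean$\times$weak$^*$ product topology; once that is established, smoothness of $b$ and $\mathcal{L}$ in $\alpha$, together with the composition hypotheses \ref{item:inverseb}-\ref{item:inverseb2} and \ref{item:monotoneL}-\ref{item:monotoneL2}, propagates the convergence to $H_n$ and $(D_pH)_n$.

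The main obstacle is precisely this joint continuity, since \ref{item:contalpha}-\ref{item:contalpha4} only offers an $L^1$-in-$x$ stability of $\alpha^*$ at constant values of its vector argument. My plan is to bootstrap it to uniform convergence of $\alpha^*_n$ through the H\"older regularity in \ref{item:contalpha}-\ref{item:contalpha1}. For each fixed vector $q$, applying \ref{item:contalpha4} with $p^1=p^2=q$ and $\nu^1=\Gamma(p_n,m_n)_t$, $\nu^2=\Gamma(p,m)_t$ gives $L^1$-in-$x$ convergence of $\alpha^*(t,\cdot,q;\Gamma(p_n,m_n)_t)$, because $|\int\xi\,d(\Gamma(p_n,m_n)_t-\Gamma(p,m)_t)|\to 0$ by Lemma~\ref{lem:hoelderstability}; the uniform H\"older bound from \ref{item:contalpha}-\ref{item:contalpha1} (applied to the constant curve $q$) then upgrades this to uniform convergence via Arzel\`a-Ascoli. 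An analogous use of \ref{item:contalpha4} in the $q$-variable yields joint pointwise convergence $\alpha^*_n(t_0,x_0)\to\alpha^*(t_0,x_0,p(t_0,x_0);\Gamma(p,m)_{t_0})$ at every $(t_0,x_0)$. Finally, the uniform $\Hoelder{\beta}{\beta/2}{\overline{Q}_T;A}$-bound on $(\alpha^*_n)_n$ guarantees equicontinuity, so pointwise convergence is promoted to uniform convergence. Substituting back into the representations closes the argument for both $H_n$ and $(D_pH)_n$.
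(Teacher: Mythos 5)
Your proposal follows the same core route as the paper's proof: reduce everything to the uniform convergence of the composed maximizer $\alpha^*_n(t,x)=\alpha^*(t,x,p_n(t,x);\Gamma(p_{n,t},m_{n,t}))$, then push the convergence through the representations $H=-p\cdot b-\mathcal{L}$ and $D_pH=-b$ using \ref{item:inverseb}-\ref{item:inverseb2} and \ref{item:monotoneL}-\ref{item:monotoneL2}. Your two additions are (i) an Arzel\`a--Ascoli wrapper at the level of $H_n$ and $(D_pH)_n$ via Assumption \ref{item:hoelderH}, which is harmless but unnecessary once uniform convergence of $\alpha^*_n$ is in hand, and (ii) a more scrupulous reading of \ref{item:contalpha}-\ref{item:contalpha4} as an $L^1$-in-$x$ estimate with \emph{constant} vector arguments. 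The paper simply applies that estimate pointwise in $x$ (with $p^1=p_{n,t}(x)$, $p^2=p_t(x)$), combines it with Lemma \ref{le:techweakconv} and Lemma \ref{lem:hoelderstability}, and takes the supremum over $\overline{Q}_T$ — a one-line argument that your version replaces with a compactness bootstrap.

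The one place where your argument does not fully close is precisely the step you flag as the main obstacle. The $L^1$+equicontinuity upgrade works cleanly for a \emph{fixed} pair of frozen arguments $(q,\nu)$, but what you ultimately need is $\|\alpha^*(t,\cdot,p_n(t,\cdot);\nu_{n,t})-\alpha^*(t,\cdot,p(t,\cdot);\nu_t)\|_{L^1(\overline{\Omega})}\to 0$, where the $p$-slot varies with the integration variable $x$; Assumption \ref{item:contalpha}-\ref{item:contalpha4} as stated does not cover $x$-dependent arguments, and your "analogous use in the $q$-variable" only yields integrated control at frozen $q$, not control of the composition. One can patch this by freezing $p_n$ on small cubes using the equi-H\"older bound of \ref{item:contalpha}-\ref{item:contalpha1}, but as written the chain of reductions does not quite reach the target. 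To be fair, this is a defect inherited from the assumption itself: the paper's own proof silently reads \ref{item:contalpha4} as a pointwise Lipschitz bound, under which both your argument and the paper's become immediate. Likewise, both you and the paper gloss over the fact that \ref{item:inverseb}-\ref{item:inverseb2} and \ref{item:monotoneL}-\ref{item:monotoneL2} assert H\"older membership of the compositions rather than their stability under uniform convergence of $\alpha$ and weak$^*$ convergence of $\nu$, so on that point you are at the same level of rigor as the original.
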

\begin{proof}
	We shall first show that
	\begin{align*}
	\alpha^*(t,x,p_{n,t}(x);\Gamma(p_{n,t},m_{n,t})) \rightarrow \alpha^*(t,x,p_{t}(x);\Gamma(p_{t},m_{t}))  \quad \text{ uniformly.}
	\end{align*} 
	The thesis will then follow by the regularity assumptions on $b$ and $\mathcal{L}$, namely \ref{item:inverseb}-\ref{item:inverseb2} and \ref{item:monotoneL}-\ref{item:monotoneL2}, and the fact that $H$ satisfies \eqref{eq:Hrep} and \eqref{eq:DpHrep} by Proposition \ref{prop:holderHamilton}.
	
	Let $\omega$ be determined by \eqref{eq:omegachoice} (for $P_0$ and $M_0$ chosen according to the bounds of the sequences $(p_n)_{n \in \N}$ and $(m_n)_{n \in \N}$). Then the following estimate follows easily from Assumption \ref{item:contalpha}-\ref{item:contalpha4} together with Lemma \ref{le:techweakconv}:
	\begin{align*}
	|\alpha^*(t,x,p_{n,t}(x);\Gamma(p_{n,t},m_{n,t}))& - \alpha^*(t,x,p_{t}(x);\Gamma(p_{t},m_{t})) |  \\
	& \leq L|p_{n,t}(x)-p_{t}(x)| \\
	& \quad \quad+ L\left|\int_{\overline{\Omega}\times A}\xi(x,\alpha)d\Gamma(p_{n,t},m_{n,t})(x,\alpha)-\int_{\overline{\Omega}\times A}\xi(x,\alpha)d\Gamma(p_{t},m_{t})(x,\alpha)\right|\\
	& \leq L|p_{n,t}(x)-p_{t}(x)| + L\|\xi\|_{\infty}(\delta+1)^\omega(1+2\delta)\lambda_{\omega}(\Gamma(p_{n,t},m_{n,t}),\Gamma(p_{t},m_{t})).
	\end{align*}
	By taking the supremum over $(t,x)\in \overline{Q}_T$, the statement is proven by invoking Lemma \ref{lem:hoelderstability}.
\end{proof}

The following a priori H\"older bound for $u$ and $m$ follows easily from the regularity assumptions on the data (i.e., the sets, constants and functions listed in the assumptions) of the problem and the previous lemmas.

\begin{lemma}\label{lem:unifboundsol}
	There exists a constant $\overline{C}$ depending only on the data of the problem such that, if $(u,m) \in \Hoelder{2+\beta}{1+\beta/2}{\overline{Q}_T;\R}\times \Hoelder{2+\beta}{1+\beta/2}{\overline{Q}_T;\R}$ with $m_t \in \mathcal{M}_1(\overline{\Omega})$ is a $\beta$-classical solution of \eqref{eq:mfg} then
	\begin{align*}
	\|u\|_{\Hoelder{2+\beta}{1+\beta/2}{\overline{Q}_T;\R}} + \|m\|_{\Hoelder{2+\beta}{1+\beta/2}{\overline{Q}_T;\R}} \leq \overline{C}.
	\end{align*}
\end{lemma}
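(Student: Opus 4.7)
The plan is to obtain the bound through a standard bootstrap argument: starting from crude $L^\infty$-type controls, I iteratively improve the H\"older exponent of $u$ and $m$ by combining interior/boundary parabolic regularity (De~Giorgi--Nash--Moser in the first step, Solonnikov's Schauder estimates afterwards) with Assumption \ref{item:hoelderH}, which converts H\"older control of $(Du,m)$ into H\"older control of $H$ and $D_pH$, and with Assumption \ref{item:hoelderpsi}, which does the analogous job for the boundary data $\psi(\cdot,m_t)$.

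For the baseline bounds, Proposition \ref{prop:measfp} already gives $m\geq 0$ with $\|m_t\|_{L^1(\Omega)}\leq 1$. The weak$^*$ compactness of $\mathcal{M}_1(\overline{\Omega}\times A)$ together with the continuity of $\mathcal{L}$, the compactness of $A$ and the continuity of $b$ yield a data-dependent bound on $H(t,x,0;\nu)$ uniformly in $\nu$, and \ref{item:hoelderpsi} applied at the specific solution gives a bound on $\|\psi(\cdot,m_t)\|_\infty$ (once combined with a preliminary control of $\|m\|_{L^\infty}$); a comparison argument for the HJB then produces $\|u\|_{L^\infty(\overline{Q}_T)}\leq C_0$. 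A Bernstein-type gradient estimate, leveraging the quasilinear structure of the HJB and the compactness of $A$, gives $\|Du\|_{L^\infty(\overline{Q}_T)}\leq C_1$, so $D_pH(t,x,Du(t,x);\mu_t)=-b(t,x,\alpha^*;\mu_t)$ is uniformly bounded by compactness of $A$. The Fokker--Planck equation \eqref{eq:fokkerplanck} then reads as a linear parabolic equation with bounded drift, and Moser iteration promotes the $L^1$ bound on $m_t$ to $\|m\|_{L^\infty(\overline{Q}_T)}\leq C_2$.

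Armed with these $L^\infty$ bounds, I would apply De~Giorgi--Nash--Moser theory to both \eqref{eq:fokkerplanck} and the HJB \eqref{eq:mfghjb} to obtain $\Hoelder{\alpha_0}{\alpha_0/2}{\overline{Q}_T;\R}$ estimates on $u$ and $m$, and $\Hoelder{\alpha_0}{\alpha_0/2}{\overline{Q}_T;\R^d}$ on $Du$, for some $\alpha_0\in(0,\beta]$. At this point Lemma \ref{lem:hoelderfixpointmeas} shows that $\mu_t=\Gamma(Du_t,m_t)$ belongs to $\mathcal{C}^{\alpha_0/2}([0,T];\mathcal{M}_1(\overline{\Omega}\times A))$, so Proposition \ref{prop:holderHamilton} combined with \ref{item:hoelderH} gives uniform H\"older bounds on $H(t,x,Du;\mu_t)$ and $D_pH(t,x,Du;\mu_t)$ in terms of $\mathcal{H}_1,\mathcal{H}_2$ evaluated at $\|Du\|_{\Hoelder{\alpha_0}{\alpha_0/2}{\overline{Q}_T;\R^d}}+\|m\|_{\Hoelder{\alpha_0}{\alpha_0/2}{\overline{Q}_T;\R}}$.

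I then bootstrap via Solonnikov's linear parabolic Schauder estimates: at iteration $k$, H\"older bounds of exponent $\alpha_k\leq\beta$ on $(Du,m)$ translate, through \ref{item:hoelderH} and \ref{item:hoelderpsi}, into $\mathcal{C}^{\alpha_k,\alpha_k/2}$ bounds on the right-hand side of the HJB and on the drift of the Fokker--Planck equation, and into $\mathcal{C}^{2+\alpha_k,1+\alpha_k/2}$ bounds on the boundary/initial data (using also the regularity of $\overline{m}$ from \ref{item:positivem}); Schauder then upgrades $u$ and $m$ to $\Hoelder{2+\alpha_k}{1+\alpha_k/2}{\overline{Q}_T;\R}$ and, in particular, $Du$ to $\Hoelder{1+\alpha_k}{(1+\alpha_k)/2}{\overline{Q}_T;\R^d}$. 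After finitely many iterations the exponent reaches $\beta$, closing the estimate and exhibiting $\overline{C}$ as a continuous function of the data (through $\mathcal{H}_1,\mathcal{H}_2,\mathcal{K}_\psi$, the Lipschitz constant $L$ from \ref{item:contalpha}, $\|\overline{m}\|_{\mathcal{C}^{2+\beta}}$, and the parameters $\sigma,T,\Omega,A,\xi$). The main obstacle is the first stage: obtaining the $L^\infty$ bound on $Du$ and the $L^1\to L^\infty$ upgrade for $m$ without circular dependence on the as-yet-uncontrolled H\"older norms, in spite of the nonlocal coupling via $\mu_t=\Gamma(Du_t,m_t)$ and of the Dirichlet boundary data that itself depends on $m$; once this baseline is secured, the remaining bootstrap is routine thanks to the way Assumptions \ref{item:hoelderpsi} and \ref{item:hoelderH} have been formulated.
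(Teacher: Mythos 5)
Your overall strategy (baseline sup-norm and gradient bounds, then De Giorgi--Nash--Moser, then a Schauder bootstrap) is the classical way to prove such an a priori estimate, and it is genuinely different from what the paper does. The paper's proof is a one-step argument: since $(u,m)$ is \emph{assumed} to lie in $\Hoelder{2+\beta}{1+\beta/2}{\overline{Q}_T;\R}\times\Hoelder{2+\beta}{1+\beta/2}{\overline{Q}_T;\R}$, Lemma \ref{lem:hoelderfixpointmeas} and Proposition \ref{prop:holderHamilton} immediately place $H(t,x,Du;\mu_t)$ and $D_pH(t,x,Du;\mu_t)$ in $\Hoelder{\beta}{\beta/2}{\overline{Q}_T;\R}$ and $\Hoelder{\beta}{\beta/2}{\overline{Q}_T;\R^d}$, and the paper then invokes the \emph{quasilinear} a priori estimate of \cite[Theorem V.6.1, (6.12)--(6.12')]{solonnikov}, whose bounds depend only on the structure constants. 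Essentially all of the work you describe (sup-norm and gradient bounds, interior H\"older estimates, the Schauder upgrade) is packaged inside that single citation; note that a \emph{linear} Schauder estimate would not close the argument here, precisely because the parabolic H\"older norms of the coefficients are controlled by $\mathcal{H}_1$, $\mathcal{H}_2$ evaluated at the as-yet-unbounded norms of $Du$ and $m$ --- the circularity you correctly identify.

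As written, however, your proof has a genuine gap, and it is exactly the one you flag yourself: the baseline stage is asserted, not proved. The Bernstein gradient estimate for $u$ cannot be run directly from Assumptions (A): Assumption \ref{item:hoelderH} only controls H\"older norms of $H$ \emph{in terms of} H\"older norms of $(Du,m)$, so it provides no unconditional growth condition in $p$; to obtain $|H(t,x,p;\nu)|\leq C(1+|p|)$ one must return to the representation $H=-p\cdot b(t,x,\alpha^*;\nu)-\mathcal{L}(t,x,\alpha^*;\nu)$ together with compactness of $A$ and joint continuity of $b$ and $\mathcal{L}$, and one also needs boundary gradient barriers compatible with the $m$-dependent Dirichlet datum $\psi(\cdot,m_t)$, whose sup-norm control in turn requires the $L^\infty$ bound on $m$ that you have not yet produced at that stage. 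A second, more structural problem is that the intermediate steps of your bootstrap take place at exponents $\alpha_0<\beta$, whereas Assumptions \ref{item:hoelderpsi}, \ref{item:hoelderH} and \ref{item:contalpha}-\ref{item:contalpha1}, as well as Lemma \ref{lem:hoelderfixpointmeas} and Proposition \ref{prop:holderHamilton}, are formulated only at the exponent $\beta$, so nothing in the stated hypotheses licenses the $\alpha_0$-versions you use. For the present lemma this detour is unnecessary anyway: the qualitative $\Hoelder{2+\beta}{1+\beta/2}{\overline{Q}_T;\R}$ regularity is part of the hypothesis, so one may work at exponent $\beta$ from the outset and let the quasilinear theory supply the quantitative bound, which is the paper's route.
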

\begin{proof}
	By Lemma \ref{lem:hoelderfixpointmeas}, if $(u,m) \in \Hoelder{2+\beta}{1+\beta/2}{\overline{Q}_T;\R}\times \Hoelder{2+\beta}{1+\beta/2}{\overline{Q}_T;\R}$ then the measure valued curve $\mu_t = \Gamma(Du(t,\cdot),m_t)$ for every $t\in[0,T]$ is H\"older continuous in time with exponent $\beta/2$. Moreover, by Proposition \ref{prop:holderHamilton} follows that the coefficients of the PDEs \eqref{eq:mfghjb} and \eqref{eq:fokkerplanck} are bounded in the parabolic H\"older space $\Hoelder{\beta}{\beta/2}{\overline{Q}_T;\R}$ and $\Hoelder{\beta}{\beta/2}{\overline{Q}_T;\R^d}$, respectively.
	
	Therefore, since $u\in\Hoelder{2+\beta}{1+\beta/2}{\overline{Q}_T;\R}$, $m$ is a solution to a nondegenerate parabolic equation with coefficients bounded in $\Hoelder{\beta}{\beta/2}{\overline{Q}_T;\R}$. We can thus invoke \cite[Theorem V.6.1, Equation (6.12)-(6.12')]{solonnikov} to get the uniform bound
	\begin{align}\label{eq:unifboundm}
	\|m\|_{\Hoelder{2+\beta}{1+\beta/2}{\overline{Q}_T;\R}}  \leq C_1,
	\end{align}
	for some constant $C_1$ depending only on the data. For the same reasons, since $m\in\Hoelder{2+\beta}{1+\beta/2}{\overline{Q}_T;\R}$, 
	then $u$ also solves a nondegenerate parabolic equation with coefficients in $\Hoelder{\beta}{\beta/2}{\overline{Q}_T;\R}$ and it satisfies
	\begin{align}\label{eq:unifboundu}
	\|u\|_{\Hoelder{2+\beta}{1+\beta/2}{\overline{Q}_T;\R}}  \leq C_2,
	\end{align}
	where the constant $C_2$ depends only on the data. Putting together the two estimates we get the thesis for $\overline{C}\defin C_1+C_2$.
\end{proof}

\subsection{Well-posedness of system \eqref{eq:mfg}}

We are now ready to prove the main result of the paper.

\begin{proof}[Proof of Theorem \ref{th:mainresult}]
	Denote by $X$ the subset of $\Hoelder{2+\beta}{1+\beta/2}{\overline{Q}_T;\R}\times\Hoelder{2+\beta}{1+\beta/2}{\overline{Q}_T;\R}$ such that if $(u,m)\in X$ then $m_t\in\mathcal{M}_1(\overline{\Omega})$ for every $t\in[0,T]$, and equip it with the product norm (which we denote by $\|\cdot\|_X$).
	\medskip
	
	\begin{claim}
		$X$ is a closed subset of $\Hoelder{2+\beta }{1+\beta/2}{\overline{Q}_T;\R}\times\Hoelder{2+\beta }{1+\beta/2}{\overline{Q}_T;\R}$.
	\end{claim}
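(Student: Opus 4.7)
The plan is to verify sequential closedness: take any sequence $(u_n,m_n)_{n\in\N} \subset X$ converging in the product norm to some $(u,m) \in \Hoelder{2+\beta}{1+\beta/2}{\overline{Q}_T;\R}\times\Hoelder{2+\beta}{1+\beta/2}{\overline{Q}_T;\R}$, and check that the limit component $m$ satisfies $m_t \in \mathcal{M}_1(\overline{\Omega})$ for every $t\in[0,T]$. The only membership conditions to preserve are the pointwise nonnegativity $m_t(x)\geq 0$ and the total-mass bound $\int_{\overline{\Omega}} m_t(x)\,dx \leq 1$, since elements of $\mathcal{M}_1(\overline{\Omega})$ are identified here with their densities (which are guaranteed to be classical functions by the regularity of $\Hoelder{2+\beta}{1+\beta/2}{\overline{Q}_T;\R}$).

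The first step is to note that convergence in $\Hoelder{2+\beta}{1+\beta/2}{\overline{Q}_T;\R}$ controls the supremum norm, so $m_n \to m$ uniformly on the compact set $\overline{Q}_T$. In particular $m_n(t,x) \to m(t,x)$ pointwise for every $(t,x)\in\overline{Q}_T$. Since each $m_{n,t}(x)\geq 0$ by hypothesis, we immediately obtain $m_t(x)\geq 0$ for all $(t,x)\in \overline{Q}_T$.

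Next, I would verify the mass constraint. Fix $t\in[0,T]$. The uniform convergence of $m_n$ to $m$ on the compact domain $\overline{Q}_T$ implies
\begin{align*}
\left|\int_{\overline{\Omega}} m_t(x)\,dx - \int_{\overline{\Omega}} m_{n,t}(x)\,dx\right| \leq |\overline{\Omega}|\,\|m_n - m\|_{\infty} \xrightarrow[n\to\infty]{} 0,
\end{align*}
so $\int_{\overline{\Omega}} m_t(x)\,dx = \lim_{n\to\infty}\int_{\overline{\Omega}} m_{n,t}(x)\,dx \leq 1$, since each term in the sequence is bounded by $1$ by hypothesis $m_{n,t}\in\mathcal{M}_1(\overline{\Omega})$. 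Combined with the pointwise positivity above, this yields $m_t\in\mathcal{M}_1(\overline{\Omega})$ for every $t\in[0,T]$, hence $(u,m)\in X$.

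There is no real obstacle here: the claim is a straightforward consequence of the fact that the parabolic Hölder norm dominates the sup norm, and that both pointwise positivity and integral bounds are stable under uniform convergence on a compact domain. The only conceptual care needed is the identification of elements of $\mathcal{M}_1(\overline{\Omega})$ with their densities, which is automatic in our setting because the second component of any element of $X$ lies in $\Hoelder{2+\beta}{1+\beta/2}{\overline{Q}_T;\R}$ and is therefore a genuine continuous function.
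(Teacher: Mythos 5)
Your proof is correct and follows essentially the same route as the paper: both use the fact that convergence in the parabolic H\"older norm implies uniform convergence, which allows passing to the limit in $\int_{\overline{\Omega}} m_{n,t}\,dx \leq 1$. Your version is in fact slightly more complete, since you also verify the nonnegativity of the limit, which the paper leaves implicit.
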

	\begin{claimproof}
		We just need to check that if $(u_n,m_n)_{n\in\N}$ is a sequence in $X$ converging to $(u,m)$, then $m_t\in\mathcal{M}_1(\overline{\Omega})$ for every $t\in[0,T]$. However, since $(m_n)_{n\in\N}$ converges to $m$ uniformly then it holds
		$$\lim_{n\rightarrow\infty}\int_{\overline{\Omega}}m_n(x)dx = \int_{\overline{\Omega}}m(x)dx.$$
		Therefore, by the fact that $m_n(\overline{\Omega}) \leq 1$ for every $n\in\N$, we have the desired statement.
	\end{claimproof}
	\medskip
	
	We now introduce the map $T:X\times [0,1]\rightarrow X$ that associates to every $(u,m) \in X$ and every $\eps \in\ [0,1]$ a solution $(v,\rho)$ of the PDE system
	\begin{align}\label{eq:mfgfixedpoint}
	\left\{\begin{aligned}
	&\partial_t v(t,x) + \sigma \Delta v(t,x) - \eps H(t,x,D u(t,x);\mu_t) = 0, & \text{ for } x\in\Omega \text{ and } t \in[0,T]\\
	&\partial_t \rho(t,x) - \sigma \Delta \rho(t,x) - \eps\diver(D_p H(t,x,D u(t,x);\mu_t)\rho(t,x)) = 0, & \text{ for } x\in\Omega \text{ and } t \in[0,T]\\
	&\mu_t = (\Id,\alpha^*(t,\cdot,D u(t,\cdot);\mu_t))_{\sharp}m_t&\text{ for } t \in[0,T]\\
	&\rho(0,x) = \eps\overline{m}(x), v(T,x) = \eps\psi(x,m_T) & \text{ for } x\in\overline{\Omega} \\
	&\rho(t,x) = 0, v(t,x) = \eps\psi(x,m_t) & \text{ for } x\in\partial\Omega\text{ and } t \in[0,T]
	\end{aligned}\right.
	\end{align}
	
	\medskip
	
	\begin{claim}{$T(\cdot,\eps)$ is a well-defined operator for every $\eps \in [0,1]$.}
	\end{claim}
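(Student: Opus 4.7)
The plan is to observe that once $(u,m) \in X$ and $\eps \in [0,1]$ are frozen, the system \eqref{eq:mfgfixedpoint} decouples into two independent linear parabolic problems (the equations for $v$ and $\rho$ no longer interact, since $\mu_t$ depends only on $u$ and $m$). We can then invoke the standard parabolic Schauder theory separately on each equation, and finally check that $\rho_t$ lies in $\mathcal{M}_1(\overline{\Omega})$.

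First, I would fix the input $(u,m) \in X$ and apply Lemma \ref{lem:hoelderfixpointmeas} to define $\mu_t \defin \Gamma(D u_t, m_t)$, which is H\"older continuous in $t$ with exponent $\beta/2$ into $\mathcal{M}_1(\overline{\Omega}\times A)$. Proposition \ref{prop:holderHamilton} then guarantees that
$$
H(t,x,Du(t,x);\mu_t) \in \Hoelder{\beta}{\beta/2}{\overline{Q}_T;\R} \quad\text{and}\quad D_pH(t,x,Du(t,x);\mu_t) \in \Hoelder{\beta}{\beta/2}{\overline{Q}_T;\R^d}.
$$
Hence the coefficients appearing in the $v$-equation (as a source) and in the $\rho$-equation (as a drift) are known H\"older-regular functions of $(t,x)$.

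Next, the equation for $v$ is a linear backward heat equation on $\Omega$ with H\"older source $\eps H(t,x,Du(t,x);\mu_t)$, terminal datum $\eps\psi(\cdot,m_T)$ and lateral Dirichlet datum $\eps\psi(\cdot,m_t)$ on $\partial\Omega$. Assumption \ref{item:hoelderpsi} ensures both data lie in $\Hoelder{\beta}{\beta/2}{\overline{Q}_T;\R}$, and compatibility at the parabolic corner follows from the fact that the terminal and lateral traces agree by construction (both are given by $\eps\psi(\cdot,m_t)$). The classical Schauder theory of \cite[Thm.~V.6.1]{solonnikov} then yields existence, uniqueness, and the estimate $v \in \Hoelder{2+\beta}{1+\beta/2}{\overline{Q}_T;\R}$. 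Symmetrically, the equation for $\rho$ is a linear forward parabolic equation with H\"older-continuous drift coefficients coming from $\diver(D_p H(\cdot,\cdot;\mu_t)\cdot)$, initial datum $\eps\overline{m} \in \mathcal{C}^{2+\beta}(\overline{\Omega})$ (by \ref{item:positivem}) and zero lateral Dirichlet datum. The same theorem from \cite{solonnikov} delivers a unique solution $\rho \in \Hoelder{2+\beta}{1+\beta/2}{\overline{Q}_T;\R}$.

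It remains to verify the measure constraint $\rho_t \in \mathcal{M}_1(\overline{\Omega})$. Since $D_pH(\cdot,\cdot;\mu_\cdot)$ is H\"older continuous on the compact set $\overline{Q}_T$, it belongs to $L^\infty(\overline{Q}_T)$, so Proposition \ref{prop:measfp} applies with $\eps\overline{m}$ in place of $\overline{m}$ and yields $\rho_t \geq 0$ together with $\|\rho_t\|_{L^1(\Omega)} \leq \eps\|\overline{m}\|_{L^1(\Omega)} \leq 1$. This shows $(v,\rho) \in X$, so $T(\cdot,\eps)$ is indeed well defined; uniqueness of $(v,\rho)$ follows from the uniqueness parts of the Schauder theory (equivalently, from the parabolic maximum principle applied to the difference of two solutions). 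The only subtle point is the H\"older regularity up to the boundary of the coefficient functions $H$ and $D_p H$, but this is precisely what Proposition \ref{prop:holderHamilton} was designed to supply, so no further obstacle arises.
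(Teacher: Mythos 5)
Your proposal is correct and follows essentially the same route as the paper: fix $(u,m)$, obtain $\mu_t=\Gamma(Du_t,m_t)$ via Lemma \ref{lem:hoelderfixpointmeas}, transfer H\"older regularity to $H$ and $D_pH$ by Proposition \ref{prop:holderHamilton}, solve the two decoupled linear parabolic problems by Schauder theory in \cite{solonnikov}, and invoke Proposition \ref{prop:measfp} for $\rho_t\in\mathcal{M}_1(\overline{\Omega})$. The only (harmless) omission is that the paper additionally records the quantitative bound $\|T(u,m,\eps)\|\leq\mathcal{K}(\|(u,m)\|_X)$ at this stage, which it reuses later for the continuity and compactness claims.
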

	\begin{claimproof}
		Set $(v,\rho) \defin T(u,m,\varepsilon)$. Since the couple $(Du,m)$ satisfies the assumptions of Lemma \ref{lem:hoelderfixpointmeas}, the measure-valued curve $\mu_t = \Gamma(Du_t,m_t)$ satisfies the fixed-point relationship
	$$\mu_t = (\Id,\alpha^*(t,\cdot,D u(t,\cdot);\mu_t))_{\sharp}m_t,$$
	for every $t\in[0,T]$. Morever, $\mu_t$ belongs to $\mathcal{C}^{\beta/2}([0,T];\mathcal{M}_1(\overline{\Omega}\times A))$.
	
	Now, by Proposition \ref{prop:holderHamilton}, if $(u,m) \in X$ then the term $H(t,x,D u(t,x);\mu_t)$ satisfies the hypotheses of \cite[Theorem IV.5.2]{solonnikov}, hence there exists a unique solution $v\in \Hoelder{2+\beta }{1+\beta/2}{\overline{Q}_T}$ of
	\begin{align}\label{eq:hjbfixedpoint}
	\left\{\begin{aligned}
	&\partial_t v(t,x) + \sigma \Delta v(t,x) - \eps H(t,x,D u(t,x);\mu_t) = 0, & \text{ for } x\in\Omega \text{ and } t \in[0,T]\\
	&v(T,x) = \eps \psi(x,m_T) & \text{ for } x\in\overline{\Omega} \\
	&v(t,x) = \eps \psi(x,m_t) & \text{ for } x\in\partial\Omega
	\end{aligned}\right.
	\end{align}
	such that
	\begin{align}\begin{split}\label{eq:boundv}
	\|v\|_{\Hoelder{2+\beta}{1+\beta/2}{\overline{Q}_T;\R}} &\leq C_1 \left(\|H(t,x,D u(t,x);\mu_t)\|_{\Hoelder{\beta}{\beta/2}{\overline{Q}_T;\R}} + \|\psi(x,m_t)\|_{\Hoelder{\beta}{\beta/2}{\overline{Q}_T;\R}}\right)\\
	&\leq C_2 \left(\mathcal{H}_1\left(\|D u(t,x)\|_{\Hoelder{\beta}{\beta/2}{\overline{Q}_T;\R^d}} + \|m\|_{\Hoelder{\beta}{\beta/2}{\overline{Q}_T;\R}}\right) + \mathcal{K}_{\psi}\left(\|m\|_{\Hoelder{\beta}{\beta/2}{\overline{Q}_T;\R}}\right)\right)\\
	&\leq \mathcal{K}_1\left(\|(u,m)\|_X\right),
	\end{split}
	\end{align}
	for some continuous function $\mathcal{K}_1:\R\rightarrow\R$ and constants $C_1,C_2>0$ depending only on the data. Here we used Assumption \ref{item:hoelderpsi}-\ref{item:hoelderpsi1}, as well as Assumption \ref{item:hoelderH}.
	
	Furthermore, since $D_p H(t,x,D u(t,x);\mu_t)$ is H\"older continuous as well, for the same reasons there exists also a solution $\rho \in \Hoelder{2+\beta }{1+\beta/2}{\overline{Q}_T}$ of
	\begin{align}\label{eq:fpfixedpoint}
	\left\{\begin{aligned}
	&\partial_t \rho(t,x) - \sigma \Delta \rho(t,x) - \eps\diver(D_p H(t,x,D u(t,x);\mu_t)\rho(t,x)) = 0, & \text{ for } x\in\Omega \text{ and } t \in[0,T]\\
	&\rho(0,x) = \eps\overline{m}_0(x) & \text{ for } x\in\overline{\Omega} \\
	&\rho(t,x) = 0 & \text{ for } x\in\partial\Omega
	\end{aligned}\right.
	\end{align}
	which satisfies
	\begin{align}\begin{split}
	\label{eq:boundrho}
	\|\rho\|_{\Hoelder{2+\beta }{1+\beta/2}{\overline{Q}_T;\R}} &\leq C_3 \left(\|D_p H(t,x,D u(t,x);\mu_t)\|_{\Hoelder{\beta}{\beta/2}{\overline{Q}_T;\R^d}} + \|\overline{m}_0\|_{\mathcal{C}^{\beta}(\overline{\Omega};\R)}\right)\\
	&\leq C_4 \left(\mathcal{H}_2\left(\|D u(t,x)\|_{\Hoelder{\beta}{\beta/2}{\overline{Q}_T;\R^d}} + \|m\|_{\Hoelder{\beta}{\beta/2}{\overline{Q}_T;\R}}\right) + \|m\|_{\Hoelder{\beta}{\beta/2}{\overline{Q}_T;\R}}\right)\\
	&\leq \mathcal{K}_2\left(\|(u,m)\|_X\right),
	\end{split}
	\end{align}
	again for some continuous function $\mathcal{K}_2:\R\rightarrow\R$ and constants $C_3,C_4>0$ depending only on the data, having used Assumption \ref{item:hoelderH}.
	
	Moreover, by Proposition \ref{prop:measfp}, it holds that $\rho_t\in\mathcal{M}_1(\overline{\Omega})$ for every $t\in[0,T]$.
	
	Summing \eqref{eq:boundv} and \eqref{eq:boundrho} together we arrive at
	\begin{align}\begin{split}\label{eq:contboundT}
	\|T(u,m,\eps)&\|_{\Hoelder{2+\beta }{1+\beta/2}{\overline{Q}_T;\R}\times\Hoelder{2+\beta }{1+\beta/2}{\overline{Q}_T;\R}} \\
	& = \|(v,\rho)\|_{\Hoelder{2+\beta }{1+\beta/2}{\overline{Q}_T;\R}\times\Hoelder{2+\beta }{1+\beta/2}{\overline{Q}_T;\R}}\\
	& = \|v\|_{\Hoelder{2+\beta }{1+\beta/2}{\overline{Q}_T;\R}} + \|\rho\|_{\Hoelder{2+\beta }{1+\beta/2}{\overline{Q}_T;\R}}\\
	& \leq \mathcal{K}\left(\|(u,m)\|_X\right),
	\end{split}
	\end{align}
	for some continuous function $\mathcal{K}:\R\rightarrow\R$ depending only on the data. It then follows that $(v,\rho) = T(u,m,\eps) \in X$, and hence $T(\cdot,\eps)$ is a well-defined map.
\end{claimproof}

\medskip

	\begin{claim}
		$T(\cdot,\eps)$ is a continuous map for every $\eps \in [0,1]$.
	\end{claim}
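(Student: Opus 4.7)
The plan is to show that if a sequence $(u_n, m_n)_{n\in\N} \subset X$ converges to $(u,m) \in X$ in the product $\Hoelder{2+\beta}{1+\beta/2}{\overline{Q}_T;\R}$ norm, then $(v_n, \rho_n) \defin T(u_n, m_n, \eps)$ converges to $(v, \rho) \defin T(u, m, \eps)$, by a standard compactness-plus-uniqueness argument of the type used for parabolic fixed-point schemes.

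First I would invoke the a priori bound \eqref{eq:contboundT}: since $(\|(u_n,m_n)\|_X)_{n\in\N}$ is bounded and $\mathcal{K}$ is continuous, the sequence $((v_n,\rho_n))_{n\in\N}$ is uniformly bounded in the product $\Hoelder{2+\beta}{1+\beta/2}{\overline{Q}_T;\R}$ norm. By the Ascoli-Arzel\`a theorem the embedding $\Hoelder{2+\beta}{1+\beta/2}{\overline{Q}_T;\R} \hookrightarrow \Hoelder{2+\beta'}{1+\beta'/2}{\overline{Q}_T;\R}$ is compact for any $\beta' \in (0,\beta)$, so every subsequence of $((v_n,\rho_n))_{n\in\N}$ admits a further subsequence converging in the weaker product Hölder topology to some limit $(\tilde v, \tilde \rho) \in X$.

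Next I would identify this limit. Lemma \ref{le:Hstability} gives uniform convergence on $\overline{Q}_T$ of both $H(t,x,Du_n(t,x);\Gamma(Du_{n,t},m_{n,t}))$ and $D_p H(t,x,Du_n(t,x);\Gamma(Du_{n,t},m_{n,t}))$ to their counterparts built from $(u,m)$; combined with the convergence $m_n \to m$ and the regularity of $\psi$ given by Assumption \ref{item:hoelderpsi}, this is enough to pass to the limit pointwise on $\overline{Q}_T$ in the system \eqref{eq:mfgfixedpoint}. Therefore $(\tilde v, \tilde \rho)$ is a classical solution of the same linear, nondegenerate parabolic Dirichlet system that defines $(v,\rho) = T(u,m,\eps)$, and uniqueness for such problems (a consequence of the maximum principle applied to the differences, which solve homogeneous problems with zero Dirichlet data) yields $(\tilde v, \tilde \rho) = (v,\rho)$. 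A subsequence-of-subsequences argument then gives that the whole sequence $(v_n, \rho_n)$ converges to $(v,\rho)$ in the weaker Hölder norm.

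To upgrade this to convergence in the original $\Hoelder{2+\beta}{1+\beta/2}{\overline{Q}_T;\R}$ product norm, I would apply the parabolic Schauder estimates of \cite[Theorem IV.5.2]{solonnikov} to the differences $v_n - v$ and $\rho_n - \rho$, which satisfy linear parabolic equations with source and boundary data built from $\eps[H(\cdot;\mu_{n,\cdot}) - H(\cdot;\mu_\cdot)]$, the corresponding drift difference, and $\eps[\psi(\cdot,m_{n,t}) - \psi(\cdot,m_t)]$. The main obstacle is that Lemma \ref{le:Hstability} yields only uniform convergence of these terms, while the Schauder estimates require convergence in the parabolic Hölder norm $\Hoelder{\beta}{\beta/2}{\overline{Q}_T;\R}$. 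I would close this gap by interpolation, combining the uniform (supremum-norm) convergence with the uniform $\Hoelder{\beta}{\beta/2}$ bounds furnished by Assumption \ref{item:hoelderH}: this produces convergence in every intermediate norm $\Hoelder{\beta'}{\beta'/2}{\overline{Q}_T;\R}$ with $\beta' < \beta$, so the Schauder estimates deliver continuity of $T(\cdot,\eps)$ in the slightly weaker topology which is the natural setting for the subsequent Leray-Schauder homotopy argument (parametrized by $\eps \in [0,1]$).
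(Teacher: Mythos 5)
Your proposal follows essentially the same route as the paper's proof: the a priori bound \eqref{eq:contboundT}, compactness via Ascoli--Arzel\`a, identification of the subsequential limit by passing to the limit in the two linear parabolic problems using Lemmas \ref{lem:hoelderstability} and \ref{le:Hstability}, and uniqueness for those problems. You are in fact more explicit than the paper on two points it glosses over — the subsequence-of-subsequences step, and the fact that Ascoli--Arzel\`a only yields convergence in a weaker topology $\Hoelder{2+\beta'}{1+\beta'/2}{\overline{Q}_T;\R}$ with $\beta'<\beta$ — so your concluding remark about the continuity holding in that slightly weaker norm is a fair and honest reading of what the argument actually delivers.
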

	\begin{claimproof}
	Let $(u_n,m_n)_{n\in\N}\in X$ be a sequence converging to $(u,m)$ in the norm of $X$, and denote by $(v_n,\rho_n) \defin T(u_n,m_n,\eps)$ for every $n\in\N$. Then $(u_n,m_n)_{n\in\N}\in X$ is also bounded, and so is $\mathcal{K}(\|(u_n,m_n)\|_X)_{n\in\N}$ by the continuity of the function $\mathcal{K}$ in \eqref{eq:contboundT}. Therefore, by the estimate \eqref{eq:contboundT} we infer that $(v_n,\rho_n)_{n\in\N}\in X$ is bounded in $\Hoelder{2+\beta }{1+\beta/2}{\overline{Q}_T;\R}\times\Hoelder{2+\beta }{1+\beta/2}{\overline{Q}_T;\R}$ as well. Hence, by Ascoli-Arzel\'a Theorem and the fact that $X$ is closed, we may infer the existence of $(v,\rho)\in X$ such that $(v_n,\rho_n)_{n\in\N}$ converges to $(v,\rho)$ in H\"older norm (up to subsequences). We now have to show that $T(u,m,\eps) = (v,\rho)$.
	
	 To do so notice that, since $(D u_n)_{n\in\N}$ is converging in $\Hoelder{\beta}{\beta/2}{\overline{Q}_T;A}$ and $(m_n)_{n\in\N}$ in $\Hoelder{\beta}{\beta/2}{\overline{Q}_T;\R}$ with $m_n \in \mathcal{M}_1(\overline{\Omega})$, we can invoke Lemma \ref{lem:hoelderstability} to infer the convergence (up to subsequences) in $\mathcal{C}^{\beta/2}$ of the sequence $(\mu_n)_{n\in\N} \subset \mathcal{C}^{\beta/2}(\overline{Q}_T;\mathcal{M}_1(\overline{\Omega}\times A))$ defined as
	$$\mu_n \defin (\Gamma(D u_{n}(t,\cdot),m_{n,t}))_{t\in[0,T]}$$
	to $\mu = (\Gamma(D u(t,\cdot),m_{t}))_{t\in[0,T]}$. Since gradients converge uniformly, by Lemma \ref{le:Hstability} it follows that
	\begin{align*}
	H(t,x,D u_n(t,x);\mu_{n,t}) &\rightarrow H(t,x,D u(t,x);\mu_{t}) \quad \text{ uniformly.}
	\end{align*} 
	Therefore, using well-known stability results for viscosity solutions of second-order equations, it holds that $v$ is a classical solution of \eqref{eq:hjbfixedpoint}. Similarly, since by Lemma \ref{le:Hstability} we also get
	\begin{align*}
	D_p H(t,x,D u_n(t,x);\mu_{n,t}) &\rightarrow D_p H(t,x,D u_n(t,x);\mu_{n,t})  \quad \text{ uniformly,}
	\end{align*} 
	then we have that $\rho$ solves \eqref{eq:fpfixedpoint}, which in turn implies $T(u,m,\eps) = (v,\rho)$.
	\end{claimproof}

\medskip

	\begin{claim}
		$T(\cdot,\eps)$ is a compact operator for every $\eps \in [0,1]$.
	\end{claim}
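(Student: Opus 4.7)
The plan is to combine the a priori bound \eqref{eq:contboundT} already established in the well-definedness step with the compact embedding of parabolic H\"older spaces, mirroring the Ascoli--Arzel\`a argument used at the end of the continuity claim.

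First, I take a bounded sequence $(u_n,m_n)_{n\in\N}\subset X$ with $\|(u_n,m_n)\|_X\leq R$ for every $n$, and set $(v_n,\rho_n)\defin T(u_n,m_n,\eps)$. Applying \eqref{eq:contboundT} together with the continuity of the function $\mathcal{K}$ on the compact interval $[0,R]$, I obtain the uniform bound
\[
\|(v_n,\rho_n)\|_{\Hoelder{2+\beta}{1+\beta/2}{\overline{Q}_T;\R}\times \Hoelder{2+\beta}{1+\beta/2}{\overline{Q}_T;\R}} \leq M_R \defin \sup_{s\in[0,R]}\mathcal{K}(s)<+\infty
\]
for every $n\in\N$.

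Next, since $\overline{Q}_T$ is compact, the inclusion $\Hoelder{2+\beta}{1+\beta/2}{\overline{Q}_T;\R}\hookrightarrow \Hoelder{2+\beta'}{1+\beta'/2}{\overline{Q}_T;\R}$ is compact for every $0<\beta'<\beta$, as a standard consequence of Ascoli--Arzel\`a applied to the derivatives up to order two. I can therefore extract a subsequence, still denoted $(v_n,\rho_n)_{n\in\N}$, converging in the product topology of $\Hoelder{2+\beta'}{1+\beta'/2}$ to some limit $(v,\rho)$. The uniform bound above passes to the limit by lower semicontinuity of the H\"older norm, so $(v,\rho)\in \Hoelder{2+\beta}{1+\beta/2}{\overline{Q}_T;\R}\times \Hoelder{2+\beta}{1+\beta/2}{\overline{Q}_T;\R}$ with norm at most $M_R$. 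Finally, the closedness of $X$ established in the first claim (which preserves the mass constraint $\rho_t\in\mathcal{M}_1(\overline{\Omega})$ under uniform convergence of $\rho$) guarantees that $(v,\rho)\in X$.

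Putting the two steps together, $T(\cdot,\eps)$ sends bounded subsets of $X$ into relatively compact subsets, which is exactly the compactness property required by the forthcoming Leray--Schauder fixed-point argument. I do not foresee any serious obstacle here: all the substantive technical work has already been carried out in deriving the a priori estimate \eqref{eq:contboundT} via the Schauder-type bounds of \cite{solonnikov}, and the only remaining ingredient is the purely functional-analytic compact embedding of parabolic H\"older spaces on the compact space-time cylinder $\overline{Q}_T$.
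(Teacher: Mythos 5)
Your argument is correct and coincides with the paper's own proof of this claim: both rest on the uniform bound \eqref{eq:contboundT} applied over a bounded set $X_M$, followed by the Ascoli--Arzel\`a compactness of bounded sets of parabolic H\"older functions and the closedness of $X$ established in the first claim. You are in fact slightly more explicit than the paper in recording that the compact embedding lands in $\Hoelder{2+\beta'}{1+\beta'/2}{\overline{Q}_T;\R}$ for $\beta'<\beta$ and that the limit remains in the original space by lower semicontinuity of the H\"older norm.
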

	\begin{claimproof}
		To show compactness we need to prove that it maps bounded sets into precompact sets. Therefore, fix $M>0$ and consider the bounded set
		$$X_M\defin \{(u,m)\in X\mid \|(u,m)\|_X\leq M \}.$$
		By \eqref{eq:contboundT} it holds that $T(\cdot,\eps)$ maps $X_M$ into the subset of $\Hoelder{2+\beta }{1+\beta/2}{\overline{Q}_T;\R}\times\Hoelder{2+\beta }{1+\beta/2}{\overline{Q}_T;\R}$ with H\"older norm bounded by the constant $\sup_{[0,M]}\mathcal{K}$. Since $X$ is closed by a previous claim, the image of $X_M$ by $T(\cdot,\eps)$ is precompact by the Ascoli-Arzel\'a theorem, as desired.
	\end{claimproof}

\medskip
		
	\begin{claim}
	$T(\cdot,\eps)$ has a fixed point for every $\eps \in [0,1]$.
	\end{claim}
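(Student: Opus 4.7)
The plan is to close the argument by invoking the Leray-Schauder fixed point theorem applied to the homotopy $\eps \mapsto T(\cdot,\eps)$. The previous claims already supply three of the four ingredients: for every $\eps \in [0,1]$, the map $T(\cdot,\eps)$ is a well-defined, continuous, compact self-map of the closed subset $X$ of the Banach space $\Hoelder{2+\beta}{1+\beta/2}{\overline{Q}_T;\R}\times\Hoelder{2+\beta}{1+\beta/2}{\overline{Q}_T;\R}$. What remains is to exhibit a fixed point at $\eps=0$ and to establish an a priori bound, uniform in $\eps$, on every possible fixed point of $T(\cdot,\eps)$.

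The first step is effortless. Setting $\eps=0$ in \eqref{eq:mfgfixedpoint} decouples the two PDEs into homogeneous heat equations with vanishing initial, terminal, and boundary data. By uniqueness of classical solutions to the heat equation with Dirichlet boundary conditions, $T(\cdot,0)\equiv(0,0)$, so $(0,0)$ is trivially its fixed point.

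The main obstacle, as I anticipate, is the uniform a priori bound. Any fixed point $(u,m)=T(u,m,\eps)$ is a $\beta$-classical solution of the system obtained from \eqref{eq:mfg} by multiplying the Hamiltonian, the boundary/terminal datum $\psi$, and the initial density $\overline{m}$ by the scalar $\eps \in [0,1]$. Since $\eps\le 1$ and the continuous bounds $\mathcal{H}_1,\mathcal{H}_2,\mathcal{K}_\psi$ supplied by Assumptions \ref{item:hoelderH} and \ref{item:hoelderpsi} only grow with the norms of their arguments, I expect the derivation of Lemma \ref{lem:unifboundsol} to go through verbatim on the $\eps$-scaled problem and to yield the same H\"older bound $\overline{C}$, depending only on the data and not on $\eps$. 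The one delicate point to verify is that Proposition \ref{prop:measfp} still delivers $m_t \in \mathcal{M}_1(\overline{\Omega})$ after the initial datum has been rescaled to $\eps \overline{m}$; but since $\eps \in [0,1]$ the rescaled initial mass is nonnegative and has $L^1$-norm at most $1$, so the same nonnegativity and $L^1$-contraction arguments of that proposition apply without change.

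With both ingredients in hand, the Leray-Schauder principle (equivalently, Schaefer's fixed-point theorem) furnishes a fixed point of $T(\cdot,\eps)$ for every $\eps\in[0,1]$; specializing to $\eps=1$, such a fixed point is by construction a $\beta$-classical solution of \eqref{eq:mfg}, whence Theorem \ref{th:mainresult} follows.
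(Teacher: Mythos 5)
Your proposal is correct and follows essentially the same route as the paper: Leray--Schauder applied to the homotopy $\eps\mapsto T(\cdot,\eps)$, with $T(\cdot,0)\equiv(0,0)$ and a uniform-in-$\eps$ a priori bound obtained by running the argument of Lemma \ref{lem:unifboundsol} on the $\eps$-scaled system (the paper cites \eqref{eq:unifboundm}--\eqref{eq:unifboundu} for exactly this). Your extra check that Proposition \ref{prop:measfp} still yields $m_t\in\mathcal{M}_1(\overline{\Omega})$ for the rescaled datum $\eps\overline{m}$ is a welcome, if implicit in the paper, detail.
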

	\begin{claimproof}
		We shall apply the Leray-Schauder fixed point theorem. We already showed that for every $\eps\in[0,1]$ the map $T(\cdot,\eps)$ is a continuous compact mapping, and it trivially holds $T(u,m,0) = (0,0)$ by the maximum principle for the heat equation. Therefore, assume that $(u,m)$ is a fixed point of $T(\cdot,\eps)$, i.e., it is a classical solution of \eqref{eq:mfg} with $\eps H$, $\eps D_p H$, $\eps \psi$ and $\eps \overline{m}$ in place of $H$, $D_pH$, $\psi$ and $\overline{m}$. Then from \eqref{eq:unifboundm} and \eqref{eq:unifboundu} in the proof of Lemma \ref{lem:unifboundsol} we see that we can select a constant $\overline{C}>0$ independent from $\eps \in [0,1]$ such that
		$$\|(u,m)\|_{\Hoelder{2+\beta }{1+\beta/2}{\overline{Q}_T;\R}\times\Hoelder{2+\beta }{1+\beta/2}{\overline{Q}_T;\R}} \leq \overline{C}.$$
		Therefore, by the Leray-Schauder fixed point theorem we have that the application $T(\cdot,\eps)$ has a fixed point for every $\eps\in[0,1]$.
	\end{claimproof}

	To conclude the proof, we simply notice that a fixed point $(u,m)$ of $T(\cdot,1)$ is also a classical solution of \eqref{eq:mfg}.
\end{proof}

\section{Modeling applications}\label{sec:examples}

We now present a set of sufficient conditions for some of the assumptions reported in Section \ref{sec:assumption}, and then we shall provide two models that satisfy them, so that we may infer for them the existence of Nash equilibria by Theorem \ref{th:mainresult}.

\subsection{Sufficient conditions for Assumption \ref{item:monotoneL}}

We shall first focus on the Lagrangian and we will show a functional form satisfying Assumptions \ref{item:monotoneL} which can be easily found in the context of multi-agent models. In such contexts, one of the most employed modeling tools is the convolution of the mass of agents with an \textit{interaction kernel}, which models the interaction of the mass with itself. We recall that the \textit{convolution} between two functions $f:Y_1\rightarrow Y_2$ and $m \in \mathcal{M}_1(Y_1)$ is the function $f* m:Y_1\rightarrow Y_2$ defined as
$$f* m(x) \defin \int_{Y_1} f(x - y) dm(y),$$
whenever this quantity is well-defined (for instance, when $m$ is absolutely continuous with respect to the Lebesgue measure on $\R^d$ and $f \in L_1(\R^d)$). Convolutions with sufficiently regular interaction kernels play a crucial role, both in the dynamics as well as inside the cost functional, in several pedestrian and financial markets models (see for instance \cite{carrillo1501.07054} for the former and \cite{ahn13} for the latter).

\begin{definition}[Multi-agent interaction Lagrangian]
	Fix $\ell:[0,T]\times\overline{\Omega}\rightarrow\R$, $h:\R\rightarrow\R$, $Q:\R^d\rightarrow\R$, $g:A\times\mathcal{M}_1(\overline{\Omega}\times A)\rightarrow\R$ and $\varepsilon \geq 0$. We define the \textit{multi-agent interaction Lagrangian} to be
	\begin{align}\label{eq:multiagentL}
	\mathcal{L}(t,x,\alpha;\nu) \defin \ell(t,x) + h((Q*\pi_{1\sharp}\nu)(x)) + g(\alpha,\nu) + \varepsilon|\alpha|^2.
	\end{align}
\end{definition}

For this kind of cost function, a rather easy sufficient condition can be formulated for Assumption \ref{item:monotoneL}-\ref{item:monotoneL1}.

\begin{proposition}\label{prop:monotoneLprop}
	If there exists $F:\overline{\Omega}\times A\rightarrow\R$ such that for every $(t,x,\alpha)\in[0,T]\times\overline{\Omega}\times A$ and $\nu^1,\nu^2 \in\mathcal{M}_1(\overline{\Omega}\times A)$ it holds
	\begin{align}\begin{split}\label{eq:suffmonotone}
	h((Q*\pi_{1\sharp}\nu^1)(x)) +g(\alpha,\nu^1) - h((Q*\pi_{1\sharp}\nu^2)(x))& -g(\alpha,\nu^2) \\
	& \geq F(x,\alpha)\int_{\overline{\Omega}\times A}F(\tilde{x},\tilde{\alpha})d(\nu^1-\nu^2)(\tilde{x},\tilde{\alpha}),
	\end{split}
	\end{align}
	then the Lagrangian \eqref{eq:multiagentL} satisfies Assumption \ref{item:monotoneL}-\ref{item:monotoneL1}.
\end{proposition}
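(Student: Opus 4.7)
The plan is direct: the proposition asks us to verify \ref{item:monotoneL}-\ref{item:monotoneL1} under \eqref{eq:suffmonotone}, and the computation is essentially an algebraic manipulation once the redundant terms are identified. First, I would expand $\mathcal{L}(t,x,\alpha;\nu^1) - \mathcal{L}(t,x,\alpha;\nu^2)$ using \eqref{eq:multiagentL}. The summands $\ell(t,x)$ and $\varepsilon|\alpha|^2$ do not depend on $\nu$, so they cancel, and the integrand in \ref{item:monotoneL}-\ref{item:monotoneL1} reduces to
$$\Delta(x,\alpha) \defin h((Q*\pi_{1\sharp}\nu^1)(x)) + g(\alpha,\nu^1) - h((Q*\pi_{1\sharp}\nu^2)(x)) - g(\alpha,\nu^2).$$
I would also introduce the scalar $C \defin \int_{\overline{\Omega}\times A} F(\tilde{x},\tilde{\alpha})\,d(\nu^1-\nu^2)(\tilde{x},\tilde{\alpha})$, which depends only on $(\nu^1,\nu^2)$ and can be pulled out of any integration in $(x,\alpha)$.

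Next, I would apply hypothesis \eqref{eq:suffmonotone} twice. Applied to $(\nu^1,\nu^2)$ it gives $\Delta(x,\alpha) \geq F(x,\alpha)\,C$. Applied to the swapped pair $(\nu^2,\nu^1)$, the right-hand side becomes $-F(x,\alpha)\,C$ and the left-hand side becomes $-\Delta(x,\alpha)$, so combining the two yields the pointwise identity $\Delta(x,\alpha) = F(x,\alpha)\,C$. Substituting this into the monotonicity integral gives
$$\int_{\overline{\Omega}\times A} (\mathcal{L}(t,x,\alpha;\nu^1) - \mathcal{L}(t,x,\alpha;\nu^2))\,d(\nu^1-\nu^2)(x,\alpha) = C \int_{\overline{\Omega}\times A} F(x,\alpha)\,d(\nu^1-\nu^2)(x,\alpha) = C^2 \geq 0,$$
which is exactly \ref{item:monotoneL}-\ref{item:monotoneL1}.

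I expect no serious obstacle here: the proof is essentially a one-line manipulation once one realises that the symmetry of \eqref{eq:suffmonotone} under swapping $\nu^1$ and $\nu^2$ forces an equality rather than a mere bound. If one prefers not to invoke this pointwise equality, one can instead decompose the signed measure $\nu^1-\nu^2$ into its Jordan parts $\sigma^+ - \sigma^-$, use the lower bound $\Delta \geq F C$ on $\sigma^+$ and the upper bound $\Delta \leq F C$ on $\sigma^-$, and recombine to obtain the same conclusion $\int \Delta\,d(\nu^1-\nu^2) \geq C^2 \geq 0$.
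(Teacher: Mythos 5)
Your proposal is correct and reaches the same conclusion as the paper's proof, but it is actually more careful on the one delicate step. The paper's argument expands the difference of Lagrangians exactly as you do (the terms $\ell(t,x)$ and $\varepsilon|\alpha|^2$ cancel) and then passes directly from the pointwise inequality \eqref{eq:suffmonotone} to the integrated inequality
$\int_{\overline{\Omega}\times A}\Delta(x,\alpha)\,d(\nu^1-\nu^2)(x,\alpha)\geq \bigl[\int_{\overline{\Omega}\times A}F\,d(\nu^1-\nu^2)\bigr]^2$,
i.e.\ it integrates a pointwise inequality against the \emph{signed} measure $\nu^1-\nu^2$. As stated, that step is not legitimate in general: integration against a signed measure does not preserve inequalities. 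Your observation that \eqref{eq:suffmonotone}, applied to the swapped pair $(\nu^2,\nu^1)$, yields the reverse bound $\Delta(x,\alpha)\leq F(x,\alpha)\,C$ and hence the pointwise identity $\Delta = F\,C$ is exactly what justifies the computation; your alternative via the Jordan decomposition (using the lower bound on $\sigma^+$ and the upper bound on $\sigma^-$) accomplishes the same thing. So the two proofs follow the same route, but yours closes a small logical gap that the paper glosses over; the only cost is the extra remark that the hypothesis is symmetric under exchanging $\nu^1$ and $\nu^2$, which is free since \eqref{eq:suffmonotone} is assumed for all pairs.
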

\begin{proof}
	Follows easily from the fact that
	\begin{align*}
	\int_{\overline{\Omega}\times A}& (\mathcal{L}(t,x,\alpha;\nu^1) - \mathcal{L}(t,x,\alpha;\nu^2))d(\nu^1 - \nu^2)(x,\alpha)\\
	& = \int_{\overline{\Omega}\times A}\left(h((Q*\pi_{1\sharp}\nu^1)(x)) +g(\alpha,\nu^1) - h((Q*\pi_{1\sharp}\nu^2)(x)) -g(\alpha,\nu^2)\right)d(\nu^1 - \nu^2)(x,\alpha) \\
	& \geq \left[\int_{\overline{\Omega}\times A}F(x,\alpha)d(\nu^1-\nu^2)(x,\alpha)\right]^2 \geq 0.
	\end{align*}
	This concludes the proof.
\end{proof}

The regularity condition of Assumption \ref{item:monotoneL}-\ref{item:monotoneL2} requires to be more precise in the choice of the function $g$. As we want to model situations where agents optimize their choices taking into account the average strategy of the other agents, we first define the operator
\begin{align}\label{eq:intmu}
\Theta(\nu) \defin \int_{\overline{\Omega}\times A}\alpha d\nu(x,\alpha).
\end{align}
The operator $\Theta$ measures the average control strategy of the mass of players. Such quantity will be considered inside the Lagrangian in the form of the following cost
\begin{align}\label{eq:gintmu}
g(\alpha,\nu) \defin \varphi(\alpha \cdot \Theta(\nu)),
\end{align}
where $\varphi$ is a sufficiently smooth, convex and monotone function. Minimizing such cost leads the infinitesimal agents to align its strategy $\alpha^*$ with the direction of the average strategy $\Theta(\nu)$, provided that $\varphi$ is decreasing. On the other hand, if $\varphi$ is increasing, the optimal strategy $\alpha^*$ should go on the opposite direction of $\Theta(\nu)$.

\begin{remark}
	In several applications, the function $g$ \mattia{may be} chosen as
	\begin{align}\label{eq:gintmuclassic}
	g(\alpha,\nu) \defin \varphi(\alpha - \Theta(\nu)),
	\end{align}
	for some smooth, convex and monotone function $\varphi$ (with $\varphi(z) = z^2$ being the favourite choice, see e.g. \cite{MR4146720,kobeissi2019classical}). Our unconventional choice of $g$ is done on the basis that all the results that will be proven for \eqref{eq:gintmu} can be trivially extended to \eqref{eq:gintmuclassic} as well, and that our cost leads to nice closed loop solutions (see, for instance, Remark \ref{rem:expcontrol}) and to interesting agent dynamics, as shown in Section \ref{sec:numerics}.
\end{remark}

We first show that the H\"older regularity in time of a measure $\nu$ passes naturally to $\Theta(\nu)$, as the following result shows.
\begin{proposition}\label{prop:holderintegral}
	Assume that $\mu\in\mathcal{C}^{\beta/2}([0,T];\mathcal{M}_1(\overline{\Omega}\times A))$ with respect to the metric $\lambda_{\omega}$ of Lemma \ref{lem:hoelderfixpointmeas}. Then the function $\Theta(\mu_t)$ 
	belongs to $\mathcal{C}^{\beta/2}([0,T];\R)$.
\end{proposition}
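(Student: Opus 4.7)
The plan is to reduce the claim to a direct application of Lemma \ref{le:techweakconv}, by viewing $\Theta(\mu_t)$ as the integral against $\mu_t$ of the bounded vector-valued function $(x,\alpha)\mapsto \alpha$ on $\overline{\Omega}\times A$. The boundedness of $\Theta(\mu_t)$ is immediate from the compactness of $A$ (which gives $|\alpha|\leq \delta$) and $\mu_t\in\mathcal{M}_1$, so the substantive part is the H\"older-in-time estimate.

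First I would write, for $t,s\in[0,T]$,
\begin{align*}
|\Theta(\mu_t)-\Theta(\mu_s)| = \left|\int_{\overline{\Omega}\times A}\alpha\, d(\mu_t-\mu_s)(x,\alpha)\right|,
\end{align*}
and then apply Lemma \ref{le:techweakconv} with $B=\overline{\Omega}\times A$ (whose diameter is $\delta$ by Assumption \ref{item:compactset}), $h(x,\alpha)=\alpha$ (so $\|h\|_\infty\leq \delta$), and the specific choice $a=(\delta+1)^\omega$. With this choice the metric $d_a$ of Lemma \ref{le:techweakconv} coincides with the metric $\lambda_\omega$ used in Lemma \ref{lem:hoelderfixpointmeas}, so the lemma yields
\begin{align*}
|\Theta(\mu_t)-\Theta(\mu_s)| \leq (\delta+1)^\omega(1+2\delta)\,\delta\,\lambda_\omega(\mu_t,\mu_s).
\end{align*}

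Finally, by the hypothesis $\mu\in\mathcal{C}^{\beta/2}([0,T];\mathcal{M}_1(\overline{\Omega}\times A))$ with respect to $\lambda_\omega$, one has $\lambda_\omega(\mu_t,\mu_s)\leq \|\mu\|_\beta |t-s|^{\beta/2}$, which combined with the previous bound and the uniform bound $|\Theta(\mu_t)|\leq \delta$ delivers $\Theta(\mu)\in\mathcal{C}^{\beta/2}([0,T];\R)$ with explicit H\"older seminorm. There is no genuine obstacle here: the only point requiring care is to match the parameter $a$ with the exponent base $(\delta+1)^\omega$ so that Lemma \ref{le:techweakconv} produces a bound in the correct metric; everything else is bookkeeping.
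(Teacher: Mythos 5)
Your proof is correct and follows essentially the same route as the paper: both apply Lemma \ref{le:techweakconv} with $h(x,\alpha)=\alpha$, $B=\overline{\Omega}\times A$ and $a=(\delta+1)^{\omega}$ so that $d_a$ coincides with $\lambda_\omega$, and then invoke the H\"older continuity of $t\mapsto\mu_t$. The explicit remark that matching $a$ to $(\delta+1)^{\omega}$ is what makes the bound land in the metric $\lambda_\omega$ is a nice touch, but otherwise there is nothing to add.
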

\begin{proof}
	By Lemma \ref{le:techweakconv} for the choice $h = \alpha$, $a = (\delta+1)^\omega$ and $B = \overline{\Omega}\times A$, for every $t,s\in[0,T]$ we have
	\begin{align*}
	|\Theta(\mu_t) - \Theta(\mu_s)| & \leq \left|\int_{\overline{\Omega}\times A}\alpha d\mu_t(x,\alpha) - \int_{\overline{\Omega}\times A}\alpha d\mu_s(x,\alpha)\right| \\
	& \leq \delta(\delta+1)^{\omega}(1+2\delta)\lambda_\omega(\mu_t,\mu_s)\\
	& \leq \delta(\delta+1)^{\omega}(1+2\delta) |\mu|_{\mathcal{C}^{\beta}([0,T])}|t-s|^{\beta/2}.
	\end{align*}
	This concludes the proof.
\end{proof}

This result allows us to extend the regularity of $\alpha$ and $\nu$ to the multi-agent interaction Langrangian, as the following result shows.

\begin{proposition}\label{prop:A5prop}
	Assume that $\ell$, $h$, $Q$ and $\varphi$ are smooth functions and that $g$ satisfies \eqref{eq:gintmu}. Then the multi-agent interaction Lagrangian \eqref{eq:multiagentL} satisfies Assumption \ref{item:monotoneL}-\ref{item:monotoneL2}.
\end{proposition}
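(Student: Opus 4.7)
The plan is to decompose the multi-agent interaction Lagrangian into its four summands
$$\mathcal{L}(t,x,\alpha(t,x);\nu_t) = \ell(t,x) + h\bigl((Q*\pi_{1\sharp}\nu_t)(x)\bigr) + \varphi\bigl(\alpha(t,x)\cdot\Theta(\nu_t)\bigr) + \varepsilon|\alpha(t,x)|^2,$$
and to verify that each summand belongs to $\Hoelder{\beta}{\beta/2}{\overline{Q}_T;\R}$. The main technical work lies in the convolution term and the alignment term $g$; the other two are handled by general principles for composing Hölder functions with smooth ones.

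The first and the last summand are essentially trivial: $\ell$ is smooth, hence in every parabolic Hölder class, while $\varepsilon|\alpha(t,x)|^2$ is the composition of the Hölder map $\alpha$ with the smooth function $z\mapsto\varepsilon|z|^2$, which preserves $\Hoelder{\beta}{\beta/2}{\overline{Q}_T;\R}$-regularity because smooth functions are locally Lipschitz with all derivatives bounded on the compact range of $\alpha$. So I would dispatch these first and then focus on the remaining two.

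For the convolution term $h\bigl((Q*\pi_{1\sharp}\nu_t)(x)\bigr)$, I would first handle the spatial regularity by differentiating under the integral sign: since $Q$ is smooth, $D^k_x (Q*\pi_{1\sharp}\nu_t)(x) = (D^k_x Q * \pi_{1\sharp}\nu_t)(x)$ is bounded uniformly in $(t,x)$ because $\pi_{1\sharp}\nu_t \in\mathcal{M}_1(\overline{\Omega})$. For the time regularity, I would use the identity
$$(Q*\pi_{1\sharp}\nu_t)(x) - (Q*\pi_{1\sharp}\nu_s)(x) = \int_{\overline{\Omega}\times A} Q(x-y)\,d(\nu_t-\nu_s)(y,\alpha),$$
and apply Lemma \ref{le:techweakconv} with the bounded test function $(y,\alpha)\mapsto Q(x-y)$ (together with a choice of $a$ corresponding to the exponent used in the metric metrizing $\nu$'s continuity) to bound this by a constant times the $\mathcal{C}^{\beta/2}$-seminorm of $\nu$ times $|t-s|^{\beta/2}$. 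Composing with the smooth $h$ then preserves the parabolic $\beta$-Hölder regularity.

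For the term $\varphi(\alpha(t,x)\cdot\Theta(\nu_t))$, Proposition \ref{prop:holderintegral} gives directly that $\Theta(\nu_t)$ lies in $\mathcal{C}^{\beta/2}([0,T];\R^d)$; since this function is constant in $x$, it belongs to $\Hoelder{\beta}{\beta/2}{\overline{Q}_T;\R^d}$, and multiplying it with the parabolic Hölder map $\alpha(t,x)$ keeps us in $\Hoelder{\beta}{\beta/2}{\overline{Q}_T;\R}$. The final composition with the smooth $\varphi$ is again handled by the general composition rule. The step I expect to be the main obstacle is the convolution term, because it is the one that genuinely forces us to transfer the weak$^*$-metric Hölder control on $\nu$ into uniform-in-$x$ pointwise Hölder control, and this is where Lemma \ref{le:techweakconv} does the essential work; the rest is a careful but routine assembly.
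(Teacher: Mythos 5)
Your proposal is correct and follows essentially the same route as the paper: decompose the Lagrangian into its summands, handle $g(\alpha,\nu_t)=\varphi(\alpha\cdot\Theta(\nu_t))$ via Proposition \ref{prop:holderintegral} together with the composition/product rules of Lemma \ref{le:lipbasic}, and treat the convolution term by splitting the increment into a spatial part (controlled by the smoothness of $Q$) and a temporal part. The one place where you genuinely diverge is the temporal increment of the convolution: the paper bounds it by $\int_{\overline{\Omega}}|Q(x_2-y)|\,|\pi_{1\sharp}\nu_{t_1}(y)-\pi_{1\sharp}\nu_{t_2}(y)|\,dy$ and appeals directly to the $\mathcal{C}^{\beta/2}$ regularity of $\nu_t$, which is a total-variation-type bound not literally controlled by the weak$^*$ metric $\lambda_\omega$; you instead write the difference as $\int Q(x-y)\,d(\nu_{t_1}-\nu_{t_2})(y,\alpha)$ and invoke Lemma \ref{le:techweakconv} with the bounded test function $(y,\alpha)\mapsto Q(x-y)$, which is precisely the mechanism the paper uses elsewhere (e.g.\ in Lemma \ref{lem:hoelderfixpointmeas}) to convert weak$^*$-metric H\"older control into pointwise estimates, and the resulting constant is uniform in $x$ since it only depends on $\|Q\|_\infty$. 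So your version of that step is the more careful one; the rest of the argument matches the paper's.
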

\begin{proof}
	Assume that $\alpha \in\Hoelder{\beta}{\beta/2}{\overline{Q}_T;A}$ and that $\nu_t \in \mathcal{C}^{\beta/2}([0,T];\mathcal{M}_1(\overline{\Omega}\times A))$. By Proposition \ref{prop:holderintegral} we have that $\Theta(\nu_t)$ belongs to $\mathcal{C}^{\beta/2}([0,T];\R)$, so that Lemma \ref{le:lipbasic} implies that $g(\alpha; \nu_t)$ is.
	
	Concerning the convolution term, notice that for every $(x_1,t_1), (x_2,t_2) \in \overline{Q}_T$ it holds
	\begin{align*}
	|h((Q*\pi_{1\sharp}&\nu_{t_1})(x_1)) - h((Q*\pi_{1\sharp}\nu_{t_2})(x_2))| \\ 
	& \leq \Lip(h)|(Q*\pi_{1\sharp}\nu_{t_1})(x_1) - (Q*\pi_{1\sharp}\nu_{t_2})(x_2)|\\
	& = \Lip(h)\left|(Q*\pi_{1\sharp}\nu_{t_1})(x_1) - (Q*\pi_{1\sharp}\nu_{t_1})(x_2) + (Q*\pi_{1\sharp}\nu_{t_1})(x_2) - (Q*\pi_{1\sharp}\nu_{t_2})(x_2)\right|\\
	& \leq \Lip(h)\int_{\overline{\Omega}}\left|Q(x_1 - y) - Q(x_2 - y)\right||\pi_{1\sharp}\nu_{t_1}(y)|dy \\
	& \qquad + \Lip(h)\int_{\overline{\Omega}}\left|Q(x_2 - y)\right||\pi_{1\sharp}\nu_{t_1}(y) - \pi_{1\sharp}\nu_{t_2}(y)|dy,
	\end{align*}
	hence the desired regularity comes from the smoothness of $h$, $Q$ and the $\mathcal{C}^{\beta/2}$ regularity in time for $\nu_t$ (which passes to $\pi_{1\sharp}\nu_t$).
	
	Since the sum of H\"older continuous functions is still H\"older continuous, we get $\mathcal{L}(t,x,\alpha(t,x);\nu_t) \in \Hoelder{\beta}{\beta/2}{\overline{Q}_T;\R}$, which is the desired statement.
\end{proof}

\subsection{Sufficient conditions for Assumption \ref{item:contalpha}}

We now pass to give sufficient conditions for Assumption \ref{item:contalpha}. The framework we consider here is coherent with the one in the previous subsection, so that we will be satisfying the sufficient conditions for Assumption \ref{item:monotoneL} at basically no cost if we already satisfying the following sufficient conditions.

\begin{proposition}\label{prop:A6prop}
	Fix $M > 0$ and $\varepsilon > 0$ such that $0 < 2\varepsilon - 6M^3 - 9M^2 - 3M \leq 2\varepsilon \leq 1$. Assume that
	\begin{enumerate}
		\item $\delta(\overline{\Omega}\times A)\leq M$ and $A \subseteq B_M(0)$,
		\item $b(t,x,\alpha;\nu) = b_1(t,x,\nu) + \alpha$ for some $b_1:[0,T]\times\overline{\Omega}\times\mathcal{M}_1(\overline{\Omega}\times A)\rightarrow \R^d$
		\item $\mathcal{L}(t,x,\alpha;\nu) = \ell(t,x,\nu) + \varphi\left(\alpha\cdot\Theta(\nu)\right) + \varepsilon|\alpha|^2$ where $\ell:[0,T]\times\overline{\Omega}\times\mathcal{M}_1(\overline{\Omega}\times A)\rightarrow \R^d$ and $\varphi:\R\rightarrow\R$ satisfies
		\begin{enumerate}
			\item $\varphi$ is convex,
			\item\label{item:A6supprime} $\Lip(\varphi^\prime) < \frac{2\varepsilon - 6M^3 - 9M^2 - 3M}{M^{2}}$,
			\item\label{item:A6lipprime} $\|\varphi^\prime\|_{\infty} \leq 2\varepsilon - \Lip(\varphi^\prime)M^2$.
		\end{enumerate}
	\end{enumerate}
	Then there exists a unique function $\alpha^*:[0,T]\times\overline{\Omega}\times\R^d\times\mathcal{M}_1(\overline{\Omega}\times A)\rightarrow A$ satisfying \eqref{eq:alphamaximum} and Assumption \ref{item:contalpha}-\ref{item:contalpha4} for the choice $\xi(x,\alpha) \defin \alpha$. 
\end{proposition}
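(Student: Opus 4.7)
The plan is to exploit the special additive form of $b$ and of $\mathcal{L}$ to reduce the maximization \eqref{eq:alphamaximum} to a strongly convex finite-dimensional minimization with no $(t,x)$-dependence, then extract a Lipschitz bound for $\alpha^*$ from the variational inequality characterization, and finally check that hypotheses \ref{item:A6supprime} and \ref{item:A6lipprime} are calibrated exactly to meet \eqref{eq:alphalipbound}.

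First, since $b(t,x,\alpha;\nu)=b_1(t,x,\nu)+\alpha$ and the $\alpha$-independent pieces $-p\cdot b_1(t,x,\nu)-\ell(t,x,\nu)$ drop out of the maximization, the optimizer $\alpha^*$ depends only on $p$ and on $\theta\defin\Theta(\nu)$, and is characterized as the minimizer over $\alpha\in A$ of
\[
G_{p,\theta}(\alpha)\defin p\cdot\alpha+\varphi(\alpha\cdot\theta)+\varepsilon|\alpha|^2.
\]
Convexity of $\varphi$ and $\varepsilon>0$ make $G_{p,\theta}$ strongly convex on $\R^d$ with modulus $2\varepsilon$, so by compactness (and convexity) of $A$ the minimizer $\alpha^*(p,\theta)\in A$ is unique and satisfies the variational inequality $\langle p+\varphi'(\alpha^*\cdot\theta)\theta+2\varepsilon\alpha^*,\,\alpha-\alpha^*\rangle\geq 0$ for every $\alpha\in A$.

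Testing the variational inequalities at $\alpha^*_i\defin\alpha^*(p^i,\theta^i)$, $i=1,2$, against each other and summing, the quadratic term produces $-2\varepsilon|\alpha^*_1-\alpha^*_2|^2$ and Cauchy--Schwarz on the rest yields
\[
2\varepsilon|\alpha^*_1-\alpha^*_2|\leq|p^1-p^2|+\bigl|\varphi'(\alpha^*_1\cdot\theta^1)\theta^1-\varphi'(\alpha^*_2\cdot\theta^2)\theta^2\bigr|.
\]
Splitting the last term as $(\varphi'(\alpha^*_1\cdot\theta^1)-\varphi'(\alpha^*_2\cdot\theta^2))\theta^1+\varphi'(\alpha^*_2\cdot\theta^2)(\theta^1-\theta^2)$, using $\Lip(\varphi')$ and $\|\varphi'\|_\infty$, and invoking $|\alpha^*_i|,|\theta^i|\leq M$ (both from $A\subseteq B_M(0)$ and $\nu^i$ being a sub-probability) together with $|\alpha^*_1\cdot\theta^1-\alpha^*_2\cdot\theta^2|\leq M(|\alpha^*_1-\alpha^*_2|+|\theta^1-\theta^2|)$, after moving the $|\alpha^*_1-\alpha^*_2|$ contribution to the left I arrive at
\[
(2\varepsilon-\Lip(\varphi')M^2)|\alpha^*_1-\alpha^*_2|\leq|p^1-p^2|+(\Lip(\varphi')M^2+\|\varphi'\|_\infty)|\theta^1-\theta^2|.
\]

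To close, hypothesis \ref{item:A6lipprime} gives $\Lip(\varphi')M^2+\|\varphi'\|_\infty\leq 2\varepsilon\leq 1$, so the coefficient of $|\theta^1-\theta^2|$ is at most one. Dividing through yields $|\alpha^*_1-\alpha^*_2|\leq(2\varepsilon-\Lip(\varphi')M^2)^{-1}(|p^1-p^2|+|\theta^1-\theta^2|)$, and hypothesis \ref{item:A6supprime} together with the feasibility condition $2\varepsilon-6M^3-9M^2-3M>0$ give
\[
2\varepsilon-\Lip(\varphi')M^2>6M^3+9M^2+3M=3M(1+M)(1+2M)\geq(3+3\delta)(1+2\delta)\|\xi\|_\infty,
\]
since $\delta\leq M$ and $\|\xi\|_\infty\leq M$ for $\xi(x,\alpha)=\alpha$. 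Identifying $\theta^1-\theta^2=\int_{\overline{\Omega}\times A}\xi\,d(\nu^1-\nu^2)$ and integrating in $x$ (trivially, as $\alpha^*$ is $x$-independent) then produces \ref{item:contalpha4} with a constant $L$ meeting \eqref{eq:alphalipbound}. The main technical obstacle is the algebraic calibration: the factorization $6M^3+9M^2+3M=3M(1+M)(1+2M)$ is what exactly matches the denominator in \eqref{eq:alphalipbound}, so the $\varphi'$-difference must be split in the right direction and $|\theta^i|$ must be bounded by $M$ (rather than by the smaller $\delta$) for the constants to line up.
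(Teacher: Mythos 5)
Your proof is correct and follows essentially the same route as the paper: obtain the optimality condition for $\alpha^*$, subtract the two conditions at $(p^1,\nu^1)$ and $(p^2,\nu^2)$, split the $\varphi'$-difference, absorb the $\Lip(\varphi')M^2|\alpha^*_1-\alpha^*_2|$ contribution into the left-hand side, use hypothesis (c) to bound the coefficient of the $\Theta$-difference by one, and calibrate via $(3+3\delta)(1+2\delta)\|\xi\|_\infty\leq 3M(1+M)(1+2M)=3M+9M^2+6M^3$ against hypothesis (b). The only deviation is that you characterize $\alpha^*$ through the variational inequality on $A$ (which tacitly requires $A$ convex, an assumption not stated) whereas the paper subtracts the unconstrained first-order equality $p+2\varepsilon\alpha^*+\varphi'(\alpha^*\cdot\Theta(\nu))\Theta(\nu)=0$ (which tacitly assumes the minimizer is interior to $A$); the two glosses are on an equal footing and yield the identical Lipschitz estimate.
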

\begin{proof}
	If $\varphi$ is convex, with the above choice of $b$ and $\mathcal{L}$, the Lagrangian function $\mathcal{J}$ is strictly convex in $\alpha$, and thus admits a unique minimizer $\alpha^*$ for every $(t,x,p,\nu) \in [0,T]\times\overline{\Omega}\times\R^d\times\mathcal{M}_1(\overline{\Omega}\times A)$. Since, under the above hypotheses, the Hamiltonian $H$ reads
	\begin{align*}
	H(t,x,p,\nu) = \sup_{\alpha}\left\{-p\cdot b_1(t,x,\nu) - p\cdot \alpha - \ell(t,x,\nu) - \varphi\left(\alpha\cdot\Theta(\nu)\right) - \varepsilon|\alpha|^2\right\},
	\end{align*}
	then optimal control $\alpha^*(t,x,p,\nu)$ satisfies the identity
	\begin{align}\label{eq:firstoder}
	p + 2\varepsilon\alpha + \varphi^\prime\left(\alpha\cdot\Theta(\nu)\right)\Theta(\nu) = 0.
	\end{align}
	
	In order to prove that Assumption \ref{item:contalpha}-\ref{item:contalpha4} holds, let $p^1, p^2 \in \R^d$ and $\nu^1, \nu^2 \in \mathcal{M}_1(\overline{\Omega}\times A)$ and denote by $\alpha_i \defin \alpha^*(t,x,p_i,\nu_i)$ for every $i = 1,2$. Then by \eqref{eq:firstoder} follows
	\begin{align*}
	2\varepsilon|\alpha_1 - \alpha_2| & = |p_1 + \varphi^\prime\left(\alpha_1\cdot\Theta(\nu_1)\right)\Theta(\nu_1) -p_2 - \varphi^\prime\left(\alpha_2\cdot\Theta(\nu_2)\right)\Theta(\nu_2)| \\
	& \leq |p_1 - p_2| + |\varphi^\prime\left(\alpha_1\cdot\Theta(\nu_1)\right)\Theta(\nu_1) - \varphi^\prime\left(\alpha_2\cdot\Theta(\nu_2)\right)\Theta(\nu_2)| \\
	& \leq |p_1 - p_2| + |\varphi^\prime\left(\alpha_1\cdot\Theta(\nu_1)\right)\Theta(\nu_1) - \varphi^\prime\left(\alpha_1\cdot\Theta(\nu_1)\right)\Theta(\nu_2)| \\
	& \qquad + |\varphi^\prime\left(\alpha_1\cdot\Theta(\nu_1)\right)\Theta(\nu_2) - \varphi^\prime\left(\alpha_2\cdot\Theta(\nu_2)\right)\Theta(\nu_2)|\\
	& \leq |p_1 - p_2| + |\varphi^\prime\left(\alpha_1\cdot\Theta(\nu_1)\right)||\Theta(\nu_1) - \Theta(\nu_2)| \\
	& \qquad + |\Theta(\nu_2)||\varphi^\prime\left(\alpha_1\cdot\Theta(\nu_1)\right) - \varphi^\prime\left(\alpha_2\cdot\Theta(\nu_2)\right)|\\
	& \leq |p_1 - p_2| + \|\varphi^\prime\|_{\infty}|\Theta(\nu_1) - \Theta(\nu_2)| \\
	& \qquad + \Lip(\varphi^\prime)|\Theta(\nu_2)||\alpha_1\cdot\Theta(\nu_1)-\alpha_2\cdot\Theta(\nu_2)|\\
	& \leq |p_1 - p_2| + \|\varphi^\prime\|_{\infty}|\Theta(\nu_1) - \Theta(\nu_2)| \\
	& \qquad + \Lip(\varphi^\prime)|\Theta(\nu_2)||\Theta(\nu_1)||\alpha_1-\alpha_2| + \Lip(\varphi^\prime)|\Theta(\nu_2)||\alpha_2||\Theta(\nu_1) - \Theta(\nu_2)|\\
	& = |p_1 - p_2| + \left(\|\varphi^\prime\|_{\infty} +\Lip(\varphi^\prime)|\Theta(\nu_2)||\alpha_2| \right)|\Theta(\nu_1) - \Theta(\nu_2)| \\
	& \qquad + \Lip(\varphi^\prime)|\Theta(\nu_2)||\Theta(\nu_1)||\alpha_1-\alpha_2|.
	\end{align*}
	Since $A \subseteq B_M(0)$ then it follows $|\alpha_2| \leq M$ as well as
	$$|\Theta(\nu)| = \left|\int_{\overline{\Omega}\times A}\alpha d\nu(x,\alpha)\right| \leq \int_{\overline{\Omega}\times A}\left|\alpha\right| d\nu(x,\alpha) \leq M\nu(\overline{\Omega}\times A) \leq M.$$
	We can therefore conclude the above chain of inequalities with
	\begin{align*}
	2\varepsilon|\alpha_1 - \alpha_2| & \leq |p_1 - p_2| + \left(\|\varphi^\prime\|_{\infty} +\Lip(\varphi^\prime)M^2 \right)|\Theta(\nu_1) - \Theta(\nu_2)| \\
	& \qquad + \Lip(\varphi^\prime)M^2|\alpha_1-\alpha_2|,
	\end{align*}
	which, after rearranging, yields
	\begin{align*}
	\left(2\varepsilon - \Lip(\varphi^\prime)M^2\right)|\alpha_1 - \alpha_2| & \leq |p_1 - p_2| + \left(\|\varphi^\prime\|_{\infty} +\Lip(\varphi^\prime)M^2 \right)\left|\int_{\overline{\Omega}\times A}\alpha d(\nu_1-\nu_2)(x,\alpha)\right|,
	\end{align*}
	and since $\Lip(\varphi^\prime) < 2\varepsilon M^{-2}$ by assumption \eqref{item:A6lipprime} and the choice of $M$ and $\varepsilon$, we have
	\begin{align*}
	|\alpha_1 - \alpha_2| & \leq \frac{1}{2\varepsilon - \Lip(\varphi^\prime)M^2}|p_1 - p_2| + \frac{\|\varphi^\prime\|_{\infty} +\Lip(\varphi^\prime)M^2 }{2\varepsilon - \Lip(\varphi^\prime)M^2}\left|\int_{\overline{\Omega}\times A}\alpha d(\nu_1-\nu_2)(x,\alpha)\right|\\
	&\leq \frac{1}{2\varepsilon - \Lip(\varphi^\prime)M^2}\left(|p_1 - p_2| + \left|\int_{\overline{\Omega}\times A}\alpha d(\nu_1-\nu_2)(x,\alpha)\right|\right),
	\end{align*}
	where we used the fact that, by assumption \eqref{item:A6supprime}, it holds $\|\varphi^\prime\|_{\infty} \leq 2\varepsilon- \Lip(\varphi^\prime)M^2$, whence $\|\varphi^\prime\|_{\infty} + \Lip(\varphi^\prime)M^2 \leq 2\varepsilon \leq 1$. It remains to show that the Lipschitz coefficient satisfies condition \eqref{eq:alphalipbound}, that is
	\begin{align*}
	\frac{1}{2\varepsilon - \Lip(\varphi^\prime)M^2} < \frac{1}{(3+3\delta)(1+2\delta)\|\xi\|_{\infty}},
	\end{align*}
	which is equivalent to
	\begin{align*}
	\Lip(\varphi^\prime)M^2 < 2\varepsilon - (3+3\delta)(1+2\delta)\|\xi\|_{\infty}.
	\end{align*}
	However, since by assumption $\delta \leq M$ and, due to the choice of $\xi(x,\alpha) = \alpha$, we have that $\|\xi\|_{\infty} \leq M$, then it is enough to show that
	\begin{align*}
	\Lip(\varphi^\prime)M^2 < 2\varepsilon - (3+3M)(1+2M)M,
	\end{align*}
	which is, however, equivalent to assumption \eqref{item:A6lipprime} and it is well defined by the choice of $M$ and $\varepsilon$. This concludes the proof.
\end{proof}

We now provide two explicit optimal control strategies satisfying both Proposition \ref{prop:A6prop} and Assumption \ref{item:contalpha}-\ref{item:contalpha1}, so that these controls satisfy Assumption \ref{item:contalpha} in full.

\begin{remark}[Linear-cost optimal control]\label{rem:linearcontrol}
In the special case $\varphi(z) \defin M_1 z$ then we can easily compute $\alpha^*$ from the first order condition \eqref{eq:firstoder} to get
\begin{align}\label{eq:linearcontrol}
\alpha^*(t,x,p;\nu) = -p - M_1\Theta(\nu),
\end{align}
which satisfies the assumptions of Proposition \ref{prop:A6prop} for $M_1$ small enough. Furthermore, if $p \in\Hoelder{\beta}{\beta/2}{\overline{Q}_T;\R^d}$ and $\nu_t \in \mathcal{C}^{\beta/2}([0,T];\mathcal{M}_1(\overline{\Omega}\times A))$, then by Proposition \ref{prop:holderintegral}, the control $\alpha^*$ satisfies Assumption \ref{item:contalpha}-\ref{item:contalpha1}.

In this particular case we can actually solve the self-referencing hidden in \eqref{eq:linearcontrol} by means of a simple computation. Indeed, for given $p$ and $m$ we have
\begin{align*}
\alpha^*&(t,x,p_t(x);\Gamma(p_t,m_t))  = -p_t(x) - M_1\Theta(\Gamma(p_t,m_t)) \\
& \Longleftrightarrow \alpha^*(t,x,p_t;\Gamma(p_t,m_t))m_t(x) = -p_t(x)m_t(x) - M_1\Theta(\Gamma(p_t,m_t))m_t(x) \\
& \Longleftrightarrow \int_{\overline{\Omega}}\alpha^*(t,x,p_t;\Gamma(p_t,m_t))m_t(x)dx = -\int_{\overline{\Omega}}p_t(x)m_t(x)dx -M_1\Theta(\Gamma(p_t,m_t))\int_{\overline{\Omega}}m_t(x)dx \\
& \Longleftrightarrow \Theta(\Gamma(p_t,m_t)) = -\int_{\overline{\Omega}}p_t(x)m_t(x)dx -M_1m_t(\overline{\Omega})\Theta(\Gamma(p_t,m_t)) \\
& \Longleftrightarrow \Theta(\Gamma(p_t,m_t)) = -\frac{1}{1 +M_1m_t(\overline{\Omega}) }\int_{\overline{\Omega}}p_t(x)m_t(x)dx,
\end{align*}
hence we get the closed-loop form
\begin{align}\label{eq:linearcontrolexplicit}
\alpha^*(t,x,p_t(x);\Gamma(p_t,m_t)) = -p_t(x) + \frac{M_1}{1 +M_1m_t(\overline{\Omega}) }\int_{\overline{\Omega}}p_t(x)m_t(x)dx.
\end{align}
This representation clearly shows that, along the typical adjoint term $-p_t$, in this modeling scenario the average adjoint with respect to the mass of players contributes in determining the optimal choice. Notice that this extra term vanishes as soon as $m_t$ does.
\end{remark}

\begin{remark}[Exponential-cost optimal control]\label{rem:expcontrol}
If we assume that $\varphi(z) \defin M_1 \exp(M_2 z)$ then the first order equation \eqref{eq:firstoder} reads
\begin{align}\label{eq:productlog}
p + \alpha + M_1M_2\exp\left(M_2\alpha\cdot\Theta(\nu)\right)\Theta(\nu) = 0,
\end{align}
whose solution by Proposition \ref{prop:productlog} is given by
\begin{align*}
\alpha^*(t,x,p;\nu) = -p - M_1M_2 \exp\left(M_2 W\left(M_1M_2^2|\Theta(\nu)|^2 p\cdot \Theta(\nu) \exp\left(-M_1M_2p\cdot \Theta(\nu)\right) \right) \right) \Theta(\nu).
\end{align*}
Notice how this functional form resembles \eqref{eq:linearcontrol}, except for the multiplier for $\Theta(\nu)$. This solution satisfies Assumption \ref{item:contalpha}--\ref{item:contalpha4} for the appropriate choices of $M_1$ and $M_2$, thanks to Proposition \ref{prop:A6prop}. In addition to this, by invoking Lemma \ref{le:lipbasic} and Proposition \ref{prop:holderintegral}, if $p \in\Hoelder{\beta}{\beta/2}{\overline{Q}_T;\R^d}$ and $\nu_t \in \mathcal{C}^{\beta/2}([0,T];\mathcal{M}_1(\overline{\Omega}\times A))$ then $\alpha^*$ satisfies the H\"older regularity conditions of Assumption \ref{item:contalpha}-\ref{item:contalpha1}.
\end{remark}

\subsection{Sufficient conditions for Assumption \ref{item:hoelderH}}\label{sec:assumption7alt}

Before moving to actual applications of our modeling framework, we show how Assumption \ref{item:hoelderH} can be satisfied. We first introduce the following definition.

\begin{definition}
	Let $Y \subseteq \R^n$ for some $n \in \N$. A function $f:[0,T]\times\overline{\Omega}\times \R^d\times\mathcal{M}_1(\overline{\Omega}\times A)\rightarrow Y$ is \textit{bounded along fixed points} iff there exists a continuos function $\mathcal{H}_f:\R\rightarrow\R$ such that for every $(p,m)\in\Hoelder{\beta}{\beta/2}{\overline{Q}_T;\R^d}\times \Hoelder{\beta}{\beta/2}{\overline{Q}_T;\R}$ it holds
	\begin{align*}
	\|f(t,x,p(t,x);\Gamma(p_t,m_t))\|_{\Hoelder{\beta}{\beta/2}{\overline{Q}_T;Y}}\leq \mathcal{H}_f\left( \|p\|_{\Hoelder{\beta}{\beta/2}{\overline{Q}_T;\R^d}} + \|m\|_{\Hoelder{\beta}{\beta/2}{\overline{Q}_T;\R}} \right).
	\end{align*}
\end{definition}

In order to give concreteness to the above definition, we immediately give an example of an optimal control strategy which is bounded along fixed points.

\begin{lemma}\label{le:linearcontrolbounded}
	The linear-cost optimal control of Remark \ref{rem:linearcontrol} is bounded along fixed points.
\end{lemma}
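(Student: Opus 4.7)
My plan is to invoke the closed-loop representation \eqref{eq:linearcontrolexplicit}, which rewrites the linear-cost control evaluated at the fixed point as
$$\alpha^*(t,x,p(t,x);\Gamma(p_t,m_t)) = -p(t,x) + F(t), \qquad F(t)\defin\frac{M_1\,G(t)}{1+M_1 N(t)},$$
with $N(t)\defin\int_{\overline{\Omega}}m(t,y)\,dy$ and $G(t)\defin\int_{\overline{\Omega}}p(t,y)m(t,y)\,dy$. Since the summand $-p$ contributes exactly $\|p\|_{\Hoelder{\beta}{\beta/2}{\overline{Q}_T;\R^d}}$ to the target norm, the whole task reduces to controlling the $\Hoelder{\beta}{\beta/2}{\overline{Q}_T;\R^d}$ norm of the purely time-dependent curve $F$ by a continuous function of $\|p\|_{\Hoelder{\beta}{\beta/2}}+\|m\|_{\Hoelder{\beta}{\beta/2}}$. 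Because $F$ is constant in $x$, that norm coincides with the standard parabolic H\"older norm of $F:[0,T]\rightarrow\R^d$ (including time derivatives of the appropriate order when $\beta\geq 2$).

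First I would verify that integration in $x$ over $\overline{\Omega}$ defines a continuous linear map from $\Hoelder{\beta}{\beta/2}{\overline{Q}_T;\R}$ into the standard H\"older space $\mathcal{C}^{\beta/2}([0,T];\R)$, differentiating under the integral sign when $\beta\geq 2$. Applied to $m$ and to the pointwise product $p\cdot m$ --- which is H\"older in $(t,x)$ because products of H\"older functions are H\"older, with explicit polynomial bounds on the norm --- this yields continuous (polynomial) controls on $\|N\|_{\mathcal{C}^{\beta/2}([0,T])}$ and $\|G\|_{\mathcal{C}^{\beta/2}([0,T])}$ in terms of $\|p\|_{\Hoelder{\beta}{\beta/2}}$ and $\|m\|_{\Hoelder{\beta}{\beta/2}}$.

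Next I would exploit the assumption $m_t\in\mathcal{M}_1(\overline{\Omega})$, which forces $N(t)\in[0,1]$ and hence keeps the denominator $1+M_1N(t)$ inside the compact interval $[1,1+M_1]$, uniformly bounded away from zero. Consequently the real rational map $(a,b)\mapsto\frac{M_1 b}{1+M_1 a}$ is smooth on the relevant domain; composing it with the H\"older curve $t\mapsto (N(t),G(t))$ preserves $\beta/2$-H\"older regularity in time, with a bound on $\|F\|_{\mathcal{C}^{\beta/2}}$ that depends continuously on $\|N\|_{\mathcal{C}^{\beta/2}}$ and $\|G\|_{\mathcal{C}^{\beta/2}}$. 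Assembling the two estimates produces the continuous function $\mathcal{H}_{\alpha^*}$ required by the definition.

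The only mildly technical point --- not a genuine obstacle --- is the case $\beta\geq 2$, where $\Hoelder{\beta}{\beta/2}{\overline{Q}_T;\R^d}$ contains bona fide time derivatives. Here one differentiates $N$ and $G$ under the integral sign and applies the chain/quotient rule to the smooth rational map, with all intermediate quantities controlled by the parabolic H\"older norms of $p$ and $m$ via the definition of $\Hoelder{\beta}{\beta/2}$ recalled in Section \ref{sec:assumption}. This is routine and yields a polynomial (hence continuous) dependence on $\|p\|_{\Hoelder{\beta}{\beta/2}}+\|m\|_{\Hoelder{\beta}{\beta/2}}$, concluding the proof.
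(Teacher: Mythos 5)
Your proposal is correct and follows essentially the same route as the paper's proof: both start from the closed-loop representation \eqref{eq:linearcontrolexplicit}, differentiate under the integral sign via the Leibniz rule, and handle the quotient $M_1/(1+M_1 m_t(\overline{\Omega}))$ by the higher-order chain rule (the paper writes this out explicitly with Fa\`a di Bruno's formula and Bell polynomials, whereas you invoke the smoothness of the rational map composed with a H\"older curve, which is the same argument in abstract form). Your explicit remark that $m_t\in\mathcal{M}_1(\overline{\Omega})$ forces the denominator into $[1,1+M_1]$, uniformly away from zero, is a point the paper leaves implicit and is worth making.
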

\begin{proof}
	We start from the equivalent formulation of the linear-cost optimal control given by \eqref{eq:linearcontrolexplicit}. Since all quantities inside integrals are bounded, for every $r,s$ such that $2r+s\leq\beta$ by Leibniz integral rule and Fa\`a di Bruno's formula we get the identity
	\begin{align*}
	D^r_tD^s_x &\alpha^*(t,x,p_t(x),\Gamma(p_t,m_t))  = -D^r_tD^s_xp_t(x) + D^r_t\left(\frac{M_1}{1 +M_1m_t(\overline{\Omega}) }\int_{\overline{\Omega}}p_t(x)m_t(x)dx\right) \\
	& = -D^r_tD^s_xp_t(x) \\
	& \qquad+ \sum_{k = 0}^r \binom{r}{k}\left(D^k_t\left(\frac{M_1}{1 +M_1m_t(\overline{\Omega})}\right)\int_{\overline{\Omega}}p_t(x)m_t(x)dx + \frac{M_1}{1 +M_1m_t(\overline{\Omega})}D^k_t\int_{\overline{\Omega}}p_t(x)m_t(x)dx \right)\\
	& = -D^r_tD^s_xp_t(x) \\
	& \qquad+ \sum_{k = 0}^r \binom{r}{k}\Bigg(\sum^k_{j=0}\frac{(-1)^j j! M_1^{j+1}}{(1 +M_1m_t(\overline{\Omega}))^{j+1}}B_{k,j}\left(D_tm_t(\overline{\Omega}), \ldots, D^{k-j+1}_tm_t(\overline{\Omega})\right)\int_{\overline{\Omega}}p_t(x)m_t(x)dx \\
	&\qquad + \frac{M_1}{1 +M_1m_t(\overline{\Omega})}\sum^k_{j=0}\binom{k}{j}\int_{\overline{\Omega}}D^j_tp_t(x)D^{k-j}_tm_t(x)dx \Bigg),
	\end{align*}
	where $B_{k,j}$ denotes the $j$-th summand of the $k$-th complete exponential Bell polynomial.
	
	Since all the quantities involved on the right-hand side involve integrals of $p_t$ and $m_t$, it can be easily shown that
	\begin{align*}
	\|D^r_tD^s_x \alpha^*(t,x,p_t(x),\Gamma(p_t,m_t))\|_{\infty}  \leq \mathcal{H}_{r,s}\left(\|p\|_{\Hoelder{\beta}{\beta/2}{\R^d;\R^d}} + \|m\|_{\Hoelder{\beta}{\beta/2}{\R^d;\R}}\right),
	\end{align*}
	for some continuous function $\mathcal{H}_{r,s}:\R\rightarrow\R$. In a similar fashion, the above identity allows to bound the H\"older norm of $D^r_tD^s_x \alpha^*$ by a continuous function of the H\"older norms of $p$ and $m$. Summing all contributions for all $r, s$ such that $2r+s\leq\beta$, we get the result.
\end{proof}

Assumption \ref{item:hoelderH} can be simply reformulated by saying that the Hamiltonian $H$ as well as its derivative $D_pH$ are bounded along fixed points.
The following result shows that if the optimal control is bounded along fixed points and if the functionals of the system preserve boundedness along fixed points, then the Hamiltonian satisfies Assumption \ref{item:hoelderH}.

\begin{proposition} \label{eq:prophoeldeH}
	
	Assume that
	\begin{enumerate}
		
		\item The function $\alpha^*:[0,T]\times\overline{\Omega}\times A \times\mathcal{M}_1(\overline{\Omega}\times A)\rightarrow A$ is bounded along fixed points,
		
		\item the function $b:[0,T]\times\overline{\Omega}\times A\times\mathcal{M}_1(\overline{\Omega}\times A)\rightarrow\R^d$ is such that, if $\alpha^*(t,x,p;\nu)$ is bounded along fixed points, then also $\overline{b}(t,x,p;\nu) \defin b(t,x,\alpha^*(t,x,p;\nu);\nu)$ is,
		
		\item The function $\mathcal{L}:[0,T]\times\overline{\Omega}\times A \times\mathcal{M}_1(\overline{\Omega}\times A)\rightarrow\R$ is such that, if $\alpha^*(t,x,p;\nu)$ is bounded along fixed points, then also $\overline{\mathcal{L}}(t,x,p;\nu) \defin \mathcal{L}(t,x,\alpha^*(t,x,p;\nu);\nu)$ is.
		
	\end{enumerate}
	
	Then the Hamiltonian $H$ satisfies Assumption \ref{item:hoelderH}.
\end{proposition}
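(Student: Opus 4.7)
The plan is to leverage the representations from Proposition \ref{prop:holderHamilton},
$$H(t,x,p;\nu) = -p\cdot b(t,x,\alpha^*(t,x,p;\nu);\nu) - \mathcal{L}(t,x,\alpha^*(t,x,p;\nu);\nu), \qquad D_pH(t,x,p;\nu) = -b(t,x,\alpha^*(t,x,p;\nu);\nu),$$
in order to reduce Assumption \ref{item:hoelderH} to boundedness along fixed points for the composite maps $\overline{b}(t,x,p;\nu) \defin b(t,x,\alpha^*(t,x,p;\nu);\nu)$ and $\overline{\mathcal{L}}(t,x,p;\nu) \defin \mathcal{L}(t,x,\alpha^*(t,x,p;\nu);\nu)$, and to a standard product estimate in the parabolic H\"older scale.

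First I would fix $(p,m)\in\Hoelder{\beta}{\beta/2}{\overline{Q}_T;\R^d}\times\Hoelder{\beta}{\beta/2}{\overline{Q}_T;\R}$, set $s\defin \|p\|_{\Hoelder{\beta}{\beta/2}{\overline{Q}_T;\R^d}} + \|m\|_{\Hoelder{\beta}{\beta/2}{\overline{Q}_T;\R}}$, and invoke hypothesis (1) to conclude that $\alpha^*$ is bounded along fixed points; hypotheses (2) and (3) then propagate this property to $\overline{b}$ and $\overline{\mathcal{L}}$ via composition. This yields continuous functions $\mathcal{H}_{\overline{b}},\mathcal{H}_{\overline{\mathcal{L}}}:\R\to\R$ with
$$\|\overline{b}(t,x,p(t,x);\Gamma(p_t,m_t))\|_{\Hoelder{\beta}{\beta/2}{\overline{Q}_T;\R^d}} \leq \mathcal{H}_{\overline{b}}(s),\qquad \|\overline{\mathcal{L}}(t,x,p(t,x);\Gamma(p_t,m_t))\|_{\Hoelder{\beta}{\beta/2}{\overline{Q}_T;\R}} \leq \mathcal{H}_{\overline{\mathcal{L}}}(s).$$

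Next I would use the fact that the parabolic H\"older space $\Hoelder{\beta}{\beta/2}{\overline{Q}_T;\cdot}$ is a Banach algebra up to a universal constant $C_0 > 0$ depending only on $\overline{Q}_T$ and $\beta$, so that in particular $\|p\cdot \overline{b}\|_{\Hoelder{\beta}{\beta/2}{\overline{Q}_T;\R}} \leq C_0\,\|p\|_{\Hoelder{\beta}{\beta/2}{\overline{Q}_T;\R^d}}\,\|\overline{b}\|_{\Hoelder{\beta}{\beta/2}{\overline{Q}_T;\R^d}} \leq C_0\,s\,\mathcal{H}_{\overline{b}}(s)$. Summing this with the bound on $\overline{\mathcal{L}}$ and using the first representation produces
$$\|H(t,x,p(t,x);\Gamma(p_t,m_t))\|_{\Hoelder{\beta}{\beta/2}{\overline{Q}_T;\R}} \leq C_0\,s\,\mathcal{H}_{\overline{b}}(s) + \mathcal{H}_{\overline{\mathcal{L}}}(s) \defin \mathcal{H}_1(s),$$
which gives the first half of Assumption \ref{item:hoelderH} since $\mathcal{H}_1$ is continuous. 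The second half is then immediate from $D_pH = -\overline{b}$, with $\mathcal{H}_2 \defin \mathcal{H}_{\overline{b}}$.

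No serious obstacle is expected: the whole argument is essentially bookkeeping once the correct composite objects are identified. The only mildly technical point is the multiplicative H\"older estimate for $p\cdot\overline{b}$, which is classical for parabolic H\"older norms on bounded cylinders. The real conceptual content of the proposition is the stipulation in hypotheses (2) and (3) that $b$ and $\mathcal{L}$ preserve boundedness along fixed points under composition with $\alpha^*$; once this is in place together with the regularity of the fixed point $\Gamma(p_t,m_t)$ established in Lemma \ref{lem:hoelderfixpointmeas}, the assertion follows transparently.
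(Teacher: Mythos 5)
Your argument is correct and coincides with the paper's own proof: both reduce the claim to the representations $H=-p\cdot\overline{b}-\overline{\mathcal{L}}$ and $D_pH=-\overline{b}$, apply hypotheses (1)--(3) to get the bounds $\mathcal{H}_{\overline{b}}$, $\mathcal{H}_{\overline{\mathcal{L}}}$, and finish with the product estimate in the parabolic H\"older norm, yielding $\mathcal{H}_1(s)=s\,\mathcal{H}_{\overline{b}}(s)+\mathcal{H}_{\overline{\mathcal{L}}}(s)$ (up to the multiplicative constant $C_0$, which you track slightly more carefully than the paper does) and $\mathcal{H}_2=\mathcal{H}_{\overline{b}}$.
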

\begin{proof}
	Let $\mathcal{H}_{b}, \mathcal{H}_{\mathcal{L}}:\R\rightarrow\R$ be the continuous function associated to the functions $b$ and $\mathcal{L}$, respectively. From the definition \eqref{def:hamiltonian} we get
	\begin{align*}
	\|H&(t,x,p_t(x);\Gamma(p_t,m_t))\|_{\Hoelder{\beta}{\beta/2}{\R^d;\R}} \leq \\
	& \leq \|p \|_{\Hoelder{\beta}{\beta/2}{\R^d;\R^d}}\| b(t,x,\alpha^*(t,x,p_t(x);\Gamma(p_t,m_t));\Gamma(p_t,m_t))\|_{\Hoelder{\beta}{\beta/2}{\R^d;\R^d}} \\
	& \qquad\qquad\qquad+\| \mathcal{L}(t,x,\alpha^*(t,x,p_t(x);\Gamma(p_t,m_t));\Gamma(p_t,m_t))\|_{\Hoelder{\beta}{\beta/2}{\R^d;\R}}\\
	&\leq \|p \|_{\Hoelder{\beta}{\beta/2}{\R^d;\R^d}} \mathcal{H}_{b}\left(\|p\|_{\Hoelder{\beta}{\beta/2}{\R^d;\R^d}} + \|m\|_{\Hoelder{\beta}{\beta/2}{\R^d;\R}}\right) \\
	& \qquad\qquad\qquad+\mathcal{H}_{\mathcal{L}}\left(\|p\|_{\Hoelder{\beta}{\beta/2}{\R^d;\R^d}} + \|m\|_{\Hoelder{\beta}{\beta/2}{\R^d;\R}}\right),
	\end{align*}
	which shows that $H$ is bounded along fixed points by means of the function $\mathcal{H}_1(x)\defin x\mathcal{H}_b(x) + \mathcal{H}_{\mathcal{L}}(x)$. Since $D_pH$ is trivially bounded along fixed points by the function $\mathcal{H}_2(x)\defin \mathcal{H}_b(x)$, the statement is proven.
\end{proof}

We now show examples of functions that preserve boundedness along fixed points (and thus can be employed in the definition of $b$ and $\mathcal{L}$ to give rise to a Hamiltonian for which Proposition \ref{eq:prophoeldeH} holds).

\begin{lemma} \label{le:thetalip}
	If $\alpha^*(t,x,p;\nu)$ is bounded along fixed points, then also $\Theta(\nu)$ is.
\end{lemma}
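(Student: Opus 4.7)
The plan is to exploit the fixed-point characterization of $\Gamma(p_t,m_t)$ to rewrite $\Theta(\Gamma(p_t,m_t))$ as a spatial integral of a H\"older product, and then to use that integration in $x$ preserves H\"older regularity in $t$. Concretely, by the fixed-point identity $\Gamma(p_t,m_t) = (\Id,\alpha^*(t,\cdot,p_t(\cdot);\Gamma(p_t,m_t)))_{\sharp}m_t$ established in Lemmas \ref{lem:fixpointmeas} and \ref{lem:hoelderfixpointmeas}, the definition \eqref{eq:intmu} of $\Theta$ yields
$$\Theta(\Gamma(p_t,m_t)) = \int_{\overline{\Omega}\times A} \alpha \, d\Gamma(p_t,m_t)(x,\alpha) = \int_{\overline{\Omega}} \alpha^*(t,x,p_t(x);\Gamma(p_t,m_t)) \, m_t(x)\, dx.$$

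Next, by the hypothesis that $\alpha^*$ is bounded along fixed points, the map $(t,x)\mapsto \alpha^*(t,x,p_t(x);\Gamma(p_t,m_t))$ has its $\Hoelder{\beta}{\beta/2}{\overline{Q}_T;A}$ norm controlled by a continuous function of $\|p\|_{\Hoelder{\beta}{\beta/2}{\overline{Q}_T;\R^d}} + \|m\|_{\Hoelder{\beta}{\beta/2}{\overline{Q}_T;\R}}$, while $m_t(x)$ has H\"older norm equal to $\|m\|_{\Hoelder{\beta}{\beta/2}{\overline{Q}_T;\R}}$. Since parabolic H\"older spaces are closed under pointwise multiplication with the usual product estimate on norms, the integrand $\alpha^*(t,x,p_t(x);\Gamma(p_t,m_t))\, m_t(x)$ belongs to $\Hoelder{\beta}{\beta/2}{\overline{Q}_T;\R^d}$ with norm controlled by a continuous function of $\|p\| + \|m\|$.

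I would then invoke the standard fact that the integration operator $f \mapsto \int_{\overline{\Omega}} f(\cdot,x)\, dx$ is bounded from $\Hoelder{\beta}{\beta/2}{\overline{Q}_T;\R^d}$ into $\mathcal{C}^{\beta/2}([0,T];\R^d)$, with operator norm at most the Lebesgue measure of $\overline{\Omega}$ (both the supremum and the time-H\"older seminorm can be pulled under the integral by linearity and the triangle inequality). Since $\Theta(\Gamma(p_t,m_t))$ is independent of $x$, its full parabolic H\"older norm on $\overline{Q}_T$ coincides with its $\mathcal{C}^{\beta/2}$ norm in $t$, and is thereby controlled by a continuous function of $\|p\| + \|m\|$, proving that $\Theta$ is bounded along fixed points.

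The main (mild) obstacle is merely checking that the product and integration steps preserve H\"older regularity with the correct quantitative estimates; both are classical facts about parabolic H\"older spaces and essentially immediate once the fixed-point substitution has been carried out, so I do not anticipate any substantive difficulty.
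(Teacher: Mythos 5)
Your proposal is correct and follows essentially the same route as the paper: both rewrite $\Theta(\Gamma(p_t,m_t))$ via the fixed-point pushforward identity as $\int_{\overline{\Omega}}\alpha^*(t,x,p_t(x);\Gamma(p_t,m_t))\,m_t(x)\,dx$ and then control the parabolic H\"older norm of the product under the integral (the paper phrases this through the Leibniz integral and product rules for the time derivatives, which is exactly your ``product estimate plus boundedness of integration'' step). No gap.
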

\begin{proof}
	We start by noticing that, since $\alpha^*(t,x,p(t,x);\Gamma(p_t,m_t)$ is a bounded $\Hoelder{\beta}{\beta/2}{\overline{Q}_T;\R}$-function for any $(p,m)\in\Hoelder{\beta}{\beta/2}{\overline{Q}_T;\R^d}\times \Hoelder{\beta}{\beta/2}{\overline{Q}_T;\R}$, then by the Leibniz integral rule it holds
	\begin{align*}
	D^r_t\Theta(\Gamma(p_t, m_t)) & = D^r_t\int_{\overline{\Omega}\times A}\alpha d\Gamma(p_t, m_t)(y,\alpha)\\
	& = D^r_t\int_{\overline{\Omega}}\alpha^*(t,y,p(t,y);\Gamma(p_t, m_t)) m_t(y) dy\\
	& = \int_{\overline{\Omega}}D^r_t\left(\alpha^*(t,y,p(t,y);\Gamma(p_t, m_t)) m_t(y)\right)dy\\
	& = \sum_{k = 1}^r \binom{r}{k}\int_{\overline{\Omega}} D^k_t\alpha^*(t,y,p(t,y);\Gamma(p_t, m_t)) D^{r-k}_t m_t(y)dy.
	\end{align*}
	We can therefore argue as in the proof of Lemma \ref{le:linearcontrolbounded} to conclude that $\Theta(\nu)$ is bounded along fixed points.
\end{proof}

The following statements are easy corollaries of the above result.

\begin{corollary}\label{cor:fixpointscost}
	Let $\phi:A\rightarrow\R^d$ be a smooth function. Then if $\alpha^*(t,x,p;\nu)$ is bounded along fixed points, also $\overline{f}(t,x,p;\nu) \defin \phi(\alpha^*(t,x,p;\nu)\cdot \Theta(\nu))$ is.
\end{corollary}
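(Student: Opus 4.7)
The plan is to assemble the claim from three elementary closure properties of the class of functions that are bounded along fixed points: (i) the hypothesis that $\alpha^*$ lies in this class, (ii) Lemma \ref{le:thetalip}, which guarantees that $\Theta(\nu)$ does as well, and (iii) the fact that pointwise products and compositions with smooth functions preserve the property.

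More concretely, I would first fix $(p,m)\in\Hoelder{\beta}{\beta/2}{\overline{Q}_T;\R^d}\times\Hoelder{\beta}{\beta/2}{\overline{Q}_T;\R}$ and set
\[
A_t(x) \defin \alpha^*(t,x,p(t,x);\Gamma(p_t,m_t)), \qquad T_t \defin \Theta(\Gamma(p_t,m_t)).
\]
By the hypothesis on $\alpha^*$ there is a continuous $\mathcal{H}_{\alpha^*}$ controlling $\|A\|_{\Hoelder{\beta}{\beta/2}{\overline{Q}_T;A}}$ in terms of $\|p\|_{\Hoelder{\beta}{\beta/2}{\overline{Q}_T;\R^d}}+\|m\|_{\Hoelder{\beta}{\beta/2}{\overline{Q}_T;\R}}$, and by Lemma \ref{le:thetalip} the same holds for $T_t$ via some continuous $\mathcal{H}_{\Theta}$ (here $T_t$ depends only on time, so its spatial derivatives vanish and its parabolic Hölder norm reduces to the time-Hölder norm). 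Because the space $\Hoelder{\beta}{\beta/2}{\overline{Q}_T;\R}$ is an algebra, the scalar product $A_t(x)\cdot T_t$ satisfies an estimate of the form
\[
\|A\cdot T\|_{\Hoelder{\beta}{\beta/2}{\overline{Q}_T;\R}} \le C\,\|A\|_{\Hoelder{\beta}{\beta/2}{\overline{Q}_T;\R^d}}\,\|T\|_{\Hoelder{\beta}{\beta/2}{\overline{Q}_T;\R^d}},
\]
so that $A_t(x)\cdot T_t$ is bounded along fixed points with controlling function $\mathcal{H}_{\text{prod}}(r)\defin C\,\mathcal{H}_{\alpha^*}(r)\,\mathcal{H}_{\Theta}(r)$.

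The final step is to apply $\phi$. Since $A$ and $\Theta$ take values in the compact set $A\subset\R^d$, the values of $A_t(x)\cdot T_t$ lie in a bounded interval $I\subset\R$ that depends only on $\diam(A)$; on $I$ the function $\phi$ and all its derivatives are uniformly bounded by smoothness, so composition with $\phi$ is a Lipschitz (in fact smooth) operation on $\Hoelder{\beta}{\beta/2}$, and by Faà di Bruno the norm $\|\phi(A\cdot T)\|_{\Hoelder{\beta}{\beta/2}{\overline{Q}_T;\R^d}}$ is bounded by a polynomial expression in $\|A\cdot T\|_{\Hoelder{\beta}{\beta/2}{\overline{Q}_T;\R}}$ with coefficients depending only on $\sup_{I}|\phi^{(k)}|$ for $k\le\lceil\beta\rceil+1$. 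Composing with $\mathcal{H}_{\text{prod}}$ yields the desired continuous controlling function $\mathcal{H}_{\overline{f}}$, proving that $\overline{f}$ is bounded along fixed points.

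The only mildly delicate point is the Faà di Bruno bound in the last step, but since $\phi$ is smooth and its argument ranges in a compact set, this is standard and no sharper obstacle arises; the rest of the proof is merely chaining together the algebra and composition rules for parabolic Hölder spaces with the two inputs already at our disposal.
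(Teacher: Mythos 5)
Your proof is correct and follows exactly the route the paper intends: the corollary is stated as an immediate consequence of Lemma \ref{le:thetalip}, and your chain (hypothesis on $\alpha^*$, Lemma \ref{le:thetalip} for $\Theta$, the algebra property of $\Hoelder{\beta}{\beta/2}{\overline{Q}_T;\R}$ for the product, and a Fa\`a di Bruno composition bound for the smooth $\phi$ on the compact range of $\alpha^*\cdot\Theta(\nu)$) is precisely the elaboration the paper leaves implicit. The only cosmetic remark is that $\Theta(\nu)$ need not lie in $A$ itself but only in a ball of radius $\diam(A)$, which is all your argument actually uses.
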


\begin{corollary}\label{cor:fixpointsdynrates}
	Let $\phi:\R\rightarrow\R^d$ be a smooth function. Then if $\alpha^*(t,x,p;\nu)$ is bounded along fixed points, also $\overline{f}(x;\nu) \defin (1 + \phi(\Theta(\nu))x$ is.
\end{corollary}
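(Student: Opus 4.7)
The plan is to chain the boundedness along fixed points established for $\Theta(\nu)$ with the smoothness of $\phi$, and then dispatch the multiplication by $x$ by an elementary Leibniz argument. The statement is essentially a composition and product of well-behaved ingredients, so no deep obstacle is expected.

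First, I would invoke Lemma \ref{le:thetalip}: since by hypothesis $\alpha^{*}$ is bounded along fixed points, so is $\Theta(\nu)$. Concretely, for every $(p,m) \in \Hoelder{\beta}{\beta/2}{\overline{Q}_T;\R^{d}} \times \Hoelder{\beta}{\beta/2}{\overline{Q}_T;\R}$ the curve $t \mapsto \Theta(\Gamma(p_{t},m_{t}))$ lies in $\mathcal{C}^{\beta/2}([0,T];\R^{d})$, with norm controlled by a continuous function $\mathcal{H}_{\Theta}$ of $\|p\|_{\Hoelder{\beta}{\beta/2}{\overline{Q}_T;\R^{d}}} + \|m\|_{\Hoelder{\beta}{\beta/2}{\overline{Q}_T;\R}}$.

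Second, because $A$ is compact (Assumption \ref{item:compactset}) and every $\nu \in \mathcal{M}_{1}(\overline{\Omega}\times A)$ satisfies $\nu(\overline{\Omega}\times A) \leq 1$, the values $\Theta(\Gamma(p_{t},m_{t}))$ remain inside a fixed compact set of $\R^{d}$. Since $\phi$ is smooth, both $\phi$ and its first derivative are therefore bounded on this set; a routine composition lemma (of the flavour of Lemma \ref{le:lipbasic}) yields that $t \mapsto 1 + \phi(\Theta(\Gamma(p_{t},m_{t})))$ is still $\mathcal{C}^{\beta/2}([0,T])$, with norm controlled by a continuous function of $\mathcal{H}_{\Theta}(\|p\|_{\Hoelder{\beta}{\beta/2}{\overline{Q}_T;\R^{d}}} + \|m\|_{\Hoelder{\beta}{\beta/2}{\overline{Q}_T;\R}})$.

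Finally, the map $(t,x) \mapsto \overline{f}(x;\Gamma(p_{t},m_{t})) = (1+\phi(\Theta(\Gamma(p_{t},m_{t}))))\,x$ factors as the product of a purely time-dependent $\mathcal{C}^{\beta/2}$ factor with the purely space-dependent smooth factor $x \mapsto x$, which has bounded derivatives on the compact set $\overline{\Omega}$. Applying the Leibniz rule to $D^{r}_{t}D^{s}_{x}\overline{f}$ for every pair $(r,s)$ with $2r + s \leq \beta$, exactly as done in the proof of Lemma \ref{le:linearcontrolbounded}, yields pointwise and H\"older bounds whose right-hand sides are continuous functions of $\|p\|_{\Hoelder{\beta}{\beta/2}{\overline{Q}_T;\R^{d}}} + \|m\|_{\Hoelder{\beta}{\beta/2}{\overline{Q}_T;\R}}$. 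Summing the contributions gives the desired estimate and shows $\overline{f}$ to be bounded along fixed points.

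The only mildly delicate point is ensuring that the compact set in which $\Theta(\Gamma(p_{t},m_{t}))$ ranges does not depend on $(p,m)$, which is exactly why we relied on the uniform bound $|\Theta(\nu)| \leq \delta$ coming from compactness of $A$; once this is observed, everything else reduces to routine composition estimates and poses no substantive obstacle.
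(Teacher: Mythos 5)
Your proof is correct and follows exactly the route the paper intends: the paper states this as an ``easy corollary'' of Lemma \ref{le:thetalip} without writing out a proof, and your argument --- boundedness of $\Theta$ along fixed points, composition with the smooth $\phi$ using the uniform bound $|\Theta(\nu)|\leq\delta$, and a Leibniz/Fa\`a di Bruno estimate for the product with $x$ as in Lemma \ref{le:linearcontrolbounded} --- is precisely the intended fleshing-out of that chain.
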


Another corollary is the following result which shows that the convolution dynamics preserve boundedness along fixed points.

\begin{corollary}
	Let $K:\R^d\rightarrow\R^d$ be a smooth bounded function and set
	$$f(t,x;\nu) \defin (K*\pi_{1\sharp}\nu)(x).$$
	Then if $\alpha^*(t,x,p;\nu)$ is bounded along fixed points, also $f(t,x;\nu)$ is.
\end{corollary}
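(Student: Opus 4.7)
The plan is to exploit the fact that, by construction of $\Gamma$, the first marginal of $\Gamma(p_t,m_t)$ is exactly $m_t$. Indeed, since $\Gamma(p_t,m_t) = (\Id,\alpha^*(t,\cdot,p_t(\cdot);\Gamma(p_t,m_t)))_{\sharp}m_t$, we have $\pi_{1\sharp}\Gamma(p_t,m_t) = m_t$, so that evaluating $f$ along the fixed point simplifies to
\begin{align*}
f(t,x;\Gamma(p_t,m_t)) = (K * m_t)(x) = \int_{\overline{\Omega}} K(x-y) m_t(y)\,dy.
\end{align*}
In particular, the control $\alpha^*$ does not enter directly in the expression; the hypothesis on $\alpha^*$ therefore plays no active role beyond guaranteeing that the underlying fixed-point $\Gamma(p_t,m_t)$ is well defined in the first place (via Lemma \ref{lem:hoelderfixpointmeas}).

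The next step is to differentiate under the integral sign: for $2r+s\leq\beta$, spatial derivatives can be shifted onto the smooth kernel $K$, while the time derivatives fall directly on $m_t$. Concretely, by the Leibniz integral rule one gets
\begin{align*}
D_t^r D_x^s\, f(t,x;\Gamma(p_t,m_t)) = \int_{\overline{\Omega}} D_x^s K(x-y)\, D_t^r m_t(y)\, dy,
\end{align*}
and the supremum over $(t,x)\in\overline{Q}_T$ is bounded by $|\overline{\Omega}|\,\|D_x^s K\|_\infty \|D_t^r m\|_\infty$, which in turn is controlled by $\|m\|_{\Hoelder{\beta}{\beta/2}{\overline{Q}_T;\R}}$ and the smoothness of $K$.

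To handle the H\"older seminorms of the top-order derivatives, I would split increments as in the proof of Proposition \ref{prop:A5prop}, writing
\begin{align*}
\bigl|D_t^r D_x^s f(t_1,x_1;\Gamma) - D_t^r D_x^s f(t_2,x_2;\Gamma)\bigr|
&\leq \int_{\overline{\Omega}} \bigl|D_x^s K(x_1-y) - D_x^s K(x_2-y)\bigr|\,|D_t^r m_{t_1}(y)|\,dy \\
&\quad + \int_{\overline{\Omega}} |D_x^s K(x_2-y)|\,|D_t^r m_{t_1}(y) - D_t^r m_{t_2}(y)|\,dy,
\end{align*}
and then use Lipschitz/H\"older control of $D_x^s K$ (from the smoothness of $K$) together with the parabolic H\"older regularity of $m$. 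Summing the resulting bounds over all $r,s$ with $2r+s\leq\beta$ yields
\begin{align*}
\|f(t,x;\Gamma(p_t,m_t))\|_{\Hoelder{\beta}{\beta/2}{\overline{Q}_T;\R^d}} \leq \mathcal{H}_f\bigl(\|m\|_{\Hoelder{\beta}{\beta/2}{\overline{Q}_T;\R}}\bigr)
\end{align*}
for a continuous $\mathcal{H}_f$, which is the desired bounded-along-fixed-points estimate (with no dependence on $\|p\|$). There is no real obstacle here: since the first marginal kills the dependence on $\alpha^*$, this is essentially the classical regularization estimate for convolution against a smooth bounded kernel, with the parabolic H\"older scales handled separately in space (through $K$) and in time (through $m$).
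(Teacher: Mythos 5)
Your proposal is correct and follows essentially the same route as the paper: both rest on the marginal identity $\pi_{1\sharp}\Gamma(p_t,m_t)=m_t$, which reduces $f$ along fixed points to $K*m_t$, and then apply the Leibniz integral rule to place spatial derivatives on $K$ and time derivatives on $m_t$, bounding the result by $|\overline{\Omega}|$ times norms of $K$ and $m$. Your treatment of the H\"older seminorms via the increment splitting is in fact more explicit than the paper's, which passes over that step with ``this easily implies.''
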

\begin{proof}
	By $\pi_{1\sharp}\Gamma(p_t, m_t) = m_t$ and the properties of the convolution operator follows that if $m \in \Hoelder{\beta}{\beta/2}{\overline{Q}_T;\R}$ then
	\begin{align*}
	D^r_tD^s_x (K*\pi_{1\sharp}\Gamma(p_t, m_t) )(x) & = D^r_tD^s_x (K*m_t )(x)\\
	& = \int_{\overline{\Omega}}K(x-y) m_t(y) dy\\
	& = \int_{\overline{\Omega}}D^s_xK(x-y) D^r_tm_t(x-y) dy\\
	& =  (D^s_x K)*\left(D^r_tm_t\right) (x),
	\end{align*}
	where we passed derivation inside the integral because of Leibniz integral rule. This easily implies that
	\begin{align*}
	\|K*\pi_{1\sharp}\Gamma(p_t, m_t) \|_{\Hoelder{\beta}{\beta/2}{\overline{Q}_T;\R^d}}\leq \|K\|_{\mathcal{C}^\infty}|\overline{\Omega}| \|m_t \|_{\mathcal{C}^{\beta/2}{([0,T];\R)}},
	\end{align*}
	which in turn yields the statement.
\end{proof}

\subsection{Refinancing dynamics}\label{sec:refinancing}

We now pass to show the first application of our modeling framework. 

Assume that $d=1$ and $\Omega = (-1,1)$, and let us consider a continuum of indistiguishable firms with $X_t \in [-1,1]$ denoting the face value of single-period debt of each firm. Notice that being indistiguishable implies that the impact of the individual choice on debt value must be infinitesimal. To simplify the setting, we assume that the level $X_t = 1$ corresponds to the default of the firm (e.g., for the lack of collateral to pledge in return), while $X_t = -1$ means that the firm quits the debt market since it has enough savings for the rest of its lifetime.

At each $t$, the firm decides the adjustment $\alpha^*$ of its debt level according to the following minimization problem
\begin{align*}
\min_{\alpha\in \mathcal{U}} J(\mattia{0},x_0,\alpha;\nu) = \mathbb{E}\left[\int^{T\wedge\tau}_0 \left(\ell(t,X_t) + g(\alpha_t,\nu_t) + \varepsilon|\alpha_t|^2 \right)dt + \psi(X_{T\wedge\tau},m_{T\wedge \tau})\right],
\end{align*}
where the term $|\alpha_t|^2$ should be considered as a \textit{deadweight adjustment cost}. The stopping time $\tau$ can be interpreted as the exit time from the debt market (if an agent could exit only in the case $X_t = 1$ it could be interpreted as a default time, but in the present case we allow to exit also for the case $X_t = -1$). Furthermore, the infinitesimal firm rolls over its entire stock of debt by paying a rate of return $R(\nu_t)$ which depends on the current demand for debt in the market, i.e., for some $\rho:\R\rightarrow\R$ sufficiently smooth we have
\begin{align*}
R(\nu) \defin \rho\left(\int_{\overline{\Omega}\times A} \alpha d\nu(x,\alpha)\right) = \rho(\Theta(\nu)).
\end{align*}
Similarly, we shall prescribe that the function $g:A\times\mathcal{M}_1(\overline{\Omega}\times A)\rightarrow \R$ is given by
\begin{align*}
g(\alpha,\nu) \defin \varphi\left(\alpha\int_{\overline{\Omega}\times A}\tilde{\alpha}d\nu(\tilde{x},\tilde{\alpha})\right) = \varphi(\alpha\Theta(\nu)),
\end{align*}
with $\varphi:\R\rightarrow \R$ smooth, increasing and convex with $\varphi'\geq B$ for some $B \geq 0$. With this cost, it is advantageous for the individual firm to \say{go against} the average demand for debt. The motivation is that it is better for a firm to increase its stock of debt (i.e., $\alpha > 0$) while there is an excess supply of funds, corresponding to the condition
$$\Theta(\nu) = \int_{\overline{\Omega}\times A}\tilde{\alpha}d\nu(\tilde{x},\tilde{\alpha}) < 0,$$
than while there is excess demand.

This setting yields the minimization problem
\begin{align*}
\min_{\alpha\in \mathcal{U}} J(\mattia{0},x_0,\alpha;\mu) = \mathbb{E}\left[\int^{T\wedge\tau}_0 \left(\ell(t,X_t) + g(\alpha_t,\mu_t) + \varepsilon|\alpha_t|^2 \right)dt + \psi(X_{T\wedge\tau},m_{T\wedge \tau})\right]
\end{align*}
subject to the stopped SDE
\begin{align*}
\left\{\begin{aligned}
d X_t &= \left((1+R(\mu_t))X_t + \alpha^*(t,X_t,Du(t,X_t);\mu_t)\right)dt + 2\sqrt{\sigma}dW_t & \quad \text{ for } t \in [0,T\wedge\tau],\\
X_t & = X_{\tau} & \quad \text{ for } t \in [T\wedge\tau,T]
\end{aligned}\right.
\end{align*}
and the fixed point relationship for $\mu_t$ given by \eqref{eq:fixpointmu}.

It is quite easy to show that we fall under the general framework outlined in Section \ref{sec:assumption} as soon as we choose $\varphi(z) = M_1z$ for the appropriate value of $M_1>0$. As Assumptions \ref{item:compactset}--\ref{item:inverseb} are trivially satisfied, let us see how we can employ the sufficient conditions of the previous sections to show that also Assumptions \ref{item:monotoneL}-\ref{item:hoelderH} hold true.

Firstly, Assumption \ref{item:monotoneL} holds as soon as we show that Proposition \ref{prop:monotoneLprop} applies to our choice of $g$: since it holds
\begin{align*}
g(\alpha,\nu_1) - g(\alpha,\nu_2) & = \varphi\left(\alpha\int_{\overline{\Omega}\times A}\tilde{\alpha}d\nu_1(\tilde{x},\tilde{\alpha})\right) - \varphi\left(\alpha\int_{\overline{\Omega}\times A}\tilde{\alpha}d\nu_2(\tilde{x},\tilde{\alpha})\right)\\
& = M_1\left(\alpha\int_{\overline{\Omega}\times A}\tilde{\alpha}d\nu_1(\tilde{x},\tilde{\alpha}) - \alpha\int_{\overline{\Omega}\times A}\tilde{\alpha}d\nu_2(\tilde{x},\tilde{\alpha})\right)\\
& = M_1\alpha\int_{\overline{\Omega}\times A} \tilde{\alpha} d(\nu_1 - \nu_2)(\tilde{x},\tilde{\alpha}).
\end{align*}
then we have \eqref{eq:suffmonotone} for $F(x,\alpha) = \alpha\sqrt{M_1}$.

Secondly, Proposition \ref{prop:A6prop} holds true as soon as we choose $M_1 \leq 2\varepsilon$, so that in this case also Assumption \ref{item:contalpha} holds.

Finally, as we are in the linear-cost optimal control case, we can employ all the results of Section \ref{sec:assumption7alt} to establish the validity of Assumption \ref{item:hoelderH}, in particular Corollaries \ref{cor:fixpointscost} and \ref{cor:fixpointsdynrates}.

\subsection{Evacuation of pedestrians}\label{ss:evac_ped}

Let $d = 2$, $\Omega\subset\R^2$ be an open subset and $A = B_M(0)$ for some $M>0$. We consider a continuum of infinitesimal pedestrian whose dynamics is given by
$$dX_t = K*m_t(X_t)dt + \alpha^*dt + 2\sqrt{\sigma}dW_t,$$
for some smooth convolution kernel $K:\R^d\rightarrow\R^d$ (like the smoothed Hegselmann-Krause one). The optimal velocity $\alpha^*$ should be chosen according to two main principles: firstly, pedestrians should try to avoid congestions, which can be done by minimizing the functional
\begin{align*}
\int_0^T Q*m_t(X_t) dt = \int_0^T\int_{\overline{\Omega}}Q(X_t-\tilde{x})dm_t(\tilde{x})dt,
\end{align*}
where $Q:\R^d\rightarrow\R$ is a decreasing radial function (see \cite{bongini2016optimal} for an analysis of this cost term in the context of pedestrian dynamics) satisfying $\inf_{\overline{\Omega}} Q\geq B$ for some $B>0$. Secondly, as argued in \cite{albi2015invisible}, pedestrian have a natural attitude to follow their mates, a tendency that can be reproduced by minimizing the quantity
\begin{align*}
\int_0^T \varphi(\alpha\cdot\Theta(\nu))dt = \int_0^T\varphi\left(\alpha\cdot\int_{\overline{\Omega}\times A}\tilde{\alpha}d\nu(\tilde{y},\tilde{\alpha})\right)dt,
\end{align*}
where $\varphi:\R\rightarrow\R$ is a strictly convex, decreasing functional with $\varphi' \geq -B/M^2$. Indeed, if this quantity is minimized, then the angle between the optimal velocity $\alpha^*$ and the average velocity chosen by the rest of the agents is going to be \say{small}. As a result, this model has the advantage of being able to treat alignment of agents even if it is a first-order model.

Putting things together, we obtain the minimization problem
\begin{align*}
\min_{\alpha \in \mathcal{U}} J(\mattia{0},x_0,\alpha;\mu)  =  \mathbb{E}\left[\int^{T\wedge\tau}_0 \left(Q*\pi_{1\sharp}\mu_t(X_t) +\varphi(\alpha\cdot\Theta(\nu)) + \varepsilon|\alpha_t|^2 \right)dt + \psi(X_{T\wedge\tau},m_{T\wedge\tau})\right]
\end{align*}
subject to the stopped SDE
\begin{align*}
\left\{\begin{aligned}
d X_t & = K*m_t(X_t)dt + \alpha^*(t,X_t,Du(t,X_t);\mu_t)dt + 2\sqrt{\sigma}dW_t & \quad \text{ for } t \in [0,T\wedge\tau],\\
X_t & = X_{\tau} & \quad \text{ for } t \in [T\wedge\tau,T],
\end{aligned}\right.
\end{align*}
and the fixed point relationship for $\mu_t$ given by \eqref{eq:fixpointmu}.

Arguing similarly to Section \ref{sec:refinancing}, we can easily show that the general framework presented above applies to this model as well as soon as we choose $\varphi(z) = -M_1z$ with $M_1 \leq B/M^2$. We shall only show that Proposition \ref{prop:monotoneLprop} holds true, since we trivially have
\begin{align*}
M_1\alpha&\cdot\int_{\overline{\Omega}\times A}\tilde{\alpha}d\nu_1(\tilde{x},\tilde{\alpha})+\int_{\overline{\Omega}}Q(x-\tilde{x})d\pi_{1\sharp}\nu_1(\tilde{x})- M_1\alpha\cdot\int_{\overline{\Omega}\times A}\tilde{\alpha}d\nu_2(x,\tilde{\alpha}) -\int_{\overline{\Omega}}Q(x-\tilde{x})d\pi_{1\sharp}\nu_2(\tilde{x})\\
& = M_1\left(\alpha\cdot\int_{\overline{\Omega}\times A}\tilde{\alpha}d(\nu_1-\nu_2)(x,\tilde{\alpha})\right)+\int_{\overline{\Omega}}Q(x-\tilde{x})d\pi_{1\sharp}(\nu_1-\nu_2)(\tilde{x})\\
&\geq -M_1M^2\frac{|\alpha|}{M}\int_{\overline{\Omega}\times A}\frac{|\tilde{\alpha}|}{M}d(\nu_1-\nu_2)(\tilde{x},\tilde{\alpha}) + B\int_{\overline{\Omega}\times A}d(\nu_1-\nu_2)(\tilde{x},\tilde{\alpha})\\
& \geq -M_1M^2\int_{\overline{\Omega}\times A}d(\nu_1-\nu_2)(\tilde{x},\tilde{\alpha})+B\int_{\overline{\Omega}\times A}d(\nu_1-\nu_2)(\tilde{x},\tilde{\alpha}) \geq 0,
\end{align*}
where we used the fact that $M \geq |\alpha|$. However the above chain of inequalities reduces to \eqref{eq:suffmonotone} for the trivial choice $F \equiv 0$.

\section{Numerical experiments} \label{sec:numerics}

\subsection{The numerical method}

In this article, we have chosen a particle method for simulating system \eqref{eq:mfg}.
The strategy consists in describing a population, composed of a great number of individuals,
by means of a reduced set of numerical particles, which obey to the action criterion of the problem. 

Let $\varphi_{\varepsilon}$ be an $\varepsilon$-dependent smooth function, such that $\varphi_{\varepsilon}(x)\defin\varphi( \varepsilon^{-1} x)/\varepsilon^d$.
It is often denoted \textit{shape function} because it is used in reconstructing the density of particle starting from the numerical particles. A careful choice of the shape function is crucial for eliminating (or, at least, greatly reducing) the noise of the density profile and for respecting the boundary conditions of the continuous problem.

The first step of the particle method we are going to employ consists in discretizing the initial density $f^{\rm in}$ by means of a set of smooth shape functions centred on the individual positions and controls, in such a way that
\begin{equation} \label{finieps}
f^{\rm in}_{\varepsilon,N_m}(x,\alpha) = \sum_{k=1}^{N_m}  \omega_k \, \varphi_{\varepsilon}(x - x_k^{0})\, \varphi_{\varepsilon}(\alpha - \alpha_k^0),
\end{equation}
where $N_m$ represents the number of numerical particles, $(x_k^{0})_{1\leq k \leq N_m}$ their initial positions and $(\alpha_k^{0})_{1\leq k \leq N_m}$ their initial velocities (i.e. their controls). We underline that each numerical particle is weighted by means of a set of parameters $\omega_k \in \R_+$ (which in our application shall be uniform in $k$). Once the number $N_m$ of numerical particles has been chosen, the initial positions $(x_k^{0})_{1\leq k \leq N_m}$ are sampled according to the initial density $f^{\rm in}$, either in a deterministic way or thanks to a Monte-Carlo procedure. In the first iteration (since the exit cost of our cost functional is going to be set to zero), we choose $(v_k^{0})_{1\leq k \leq N_m}$ such that the particles move in the direction of the boundary point closest to their initial position with velocities of maximum norm.

We introduce a time discretization of step $\Delta t$ so that we set $t^n \defin n \Delta t$. The density of the continuous problem at time $t^n$ is hence
\begin{equation} \label{fneps}
f^{n}_{\varepsilon,N_m}(x,\alpha) = \sum_{k=1}^{N_m}  \omega_k \, \varphi_{\varepsilon}(x - x_k^n)\, \varphi_{\varepsilon}(\alpha - \alpha_k^n),
\end{equation}
where $(x_k^{n})_{1\leq k \leq N_m}$ and $(v_k^{n})_{1\leq k \leq N_m}$ are the positions and the velocities of the numerical particles at time $t^n$. The positions of the numerical particles evolve in time by minimizing the individual cost functional specific to the problem, in a time interval $[n \Delta t,(n+1) \Delta t]$. In the next Section, we will provide an example of such a cost. Once we identify the velocity (that is, the control strategy) which minimizes the individual cost, the numerical particles move according to such velocity.

In order to find the Nash equilibrium solution of the problem, an iterative numerical scheme has been implemented. 
We begin the iteration procedure by considering a simplified problem, and use this solution as the starting point of the iteration scheme. 
First, the admissible velocities are discretized in an appropriated way in order to reduce them to a finite set.
Then, the next steps are implemented as follows. 

In what follows, we denote by $j$ the index labelling the iteration step, by $(x_{k,j}^{n})_{1\leq k \leq N_m}$ and $(\alpha_{k,j}^{n})_{1\leq k \leq N_m}$ the positions and the velocities of the particle labelled with the index $k$ at time $t^n$ at the $j$-th iteration.
At the beginning of the $j$-th iteration, we randomly order the numerical particles, thanks to a sample from the uniform distribution. We then consider the first particle $x^{1}_{1,j}$ (with respect to the random order) at the numerical time $t^1$ and choose $\alpha^1_{1,j}$ by testing the possible costs between $t^1$ and $t^2$ of this particle with respect to the set of discretized admissible velocities, by assuming that the other particles have the positions and the velocities computed at the $(j-1)$-th iteration (i.e., using $(\alpha_{k,j-1}^{0})_{2\leq k \leq N_m}$).
Once we find the minimal cost for the first particle, we update the position and the velocity of such particle, which will be used for the computations of the next step. At the numerical time $t^2$, we consider the second particle $x^2_{2,j}$ (with respect to the random order). We choose $\alpha^2_{2,j}$ by testing the possible costs between $t^2$ and $t^3$ of this particle with respect to the set of discretetized admissible velocities, by assuming that the other particles have the positions and the velocities computed at the $(j-1)$-th iteration, except for the first particle, whose position and velocity have been updated in the previous step (i.e., using $\alpha_{1,j}^1$ and $(\alpha_{k,j-1}^{0})_{3\leq k \leq N_m}$). We then end the procedure either once all the particles have been tested, or when the time horizon of the problem has been reached.

The updated positions and velocities are then the starting point for the $(j+1)$-th iteration.

The procedure ends when the difference between the distribution obtained at the $(j+1)$-th iteration and the distribution obtained at the $j$-th iteration
is below a given threshold.

A peculiar feature of the method is the absence of a stability condition. This could be an advantage for simulating the long-time behaviour of a MFG system, especially in space dimensions higher than one.
Moreover, this technique is very well-suited to treat not only second-order systems of PDEs in space, but also first-order systems. In such case, it allows to manage non-conventional boundary conditions, such as absorbing boundary conditions or specular reflection boundary conditions. We finally underline that particle methods suffer from artificial numerical diffusion much less than finite-difference methods.

On the other hand, in order to have accurate simulations, the number of required numerical particles should be very high and the simulations can be
time-consuming.

In some sense, our procedure, based on successive approximations, mimics a real-life repeated game and is linked to the concepts of
\say{best reply} \cite{MR3268055} and \say{fictitious play} \cite{MR3608094}. We underline that iterative procedures for the numerical implementation of MFG, not based on finite difference methods, have been proposed in the context of non-compulsory vaccination \cite{MR3810807}.

\subsection{Numerical simulations}

In this section, we discuss some numerical results obtained from the implementation of the method described in the previous subsection to the evacuation model of Section \ref{ss:evac_ped}.

We have considered the space domain $\Omega\defin(0,1)$, the control space $A\defin[-.2,.2]$, the convolution kernel $K = 0$, the initial measure $\overline{m}$ as a smoothed version of the step function
\begin{equation*}
f^{\rm in}\defin\left\{
\begin{array}{ll}
0, & x\in(0, 0.5) \\[10pt]
2, & x\in(0.5,1), 
\end{array}
\right.
\end{equation*}
and the cost functional given by
\begin{equation}
\begin{aligned}
	\label{f:int}
	\mathcal{J}(t,x,\alpha;\nu)& \defin  \mathbb{E}\Bigg[\eta \int_{\mattia{t}}^{T\wedge\tau} \left (\left \vert \int_{\Omega} y\, \pi_{1\sharp}\nu_t(y,\tilde{\alpha})dy -x \right\vert +.2\right)^{-1}dt  \\
& \qquad \qquad +\beta  \int_{\mattia{t}}^{T\wedge\tau} \alpha_t\int_{\Omega\times A}{\tilde{\alpha}}\, \nu_t({y},\tilde{\alpha})dy \, d\tilde{\alpha} \, dt + \varepsilon \int_{\mattia{t}}^{T\wedge\tau}|\alpha_t|^2dt\Bigg],
\end{aligned}	
\end{equation}
with $\eta=4$, $\beta=1$, $\varepsilon = 1/2$, and where $\tau$ is the exit time from the domain. Notice that we assume that there is no exit cost, i.e. $\phi=0$. It is easy to show that this functional satisfies the hypotheses of Proposition \ref{prop:A5prop}, Proposition \ref{prop:A6prop} and Proposition \ref{eq:prophoeldeH}. Hence, Assumptions (A) listed in Section \ref{sec:assumption} are satisfied and the pedestrian evacuation model admits a Nash equilibrium.

Notice that the first integral in \eqref{f:int} takes into account the tendency of pedestrians to avoid congestions and the second one the tendency to follow their mates (see Subsection \ref{ss:evac_ped} for more details concerning the meaning of this functional).

In the numerical experiments, we discretized the above quantities in the following way.

First, the initial density used in the numerical experiments is $f^{\rm in}$. We underline that $f^{\rm in}$ has total mass equal to one and its center of mass is located at $x=0.75$.

Then, the simulation has been obtained by using $N=6\times 10^3$ numerical particles.

To discretize the control space $A$, we choose $N_\alpha\in \N$, with $N_\alpha >2$ and denoted by $\Delta\alpha = 0.4/(N_\alpha-1)$. In our simulations we have considered the control set 
$$
A_\alpha \defin \{-0.2, -0.2+\Delta\alpha, \dots, -0.2 + (N_\alpha-1)\Delta\alpha , 0.2\}
$$ 
obtained by discretizing the control space $A$. At each instant, the numerical particles choose their optimal velocity $\alpha^*\in A_\alpha$
in order to minimize the functional \eqref{f:int}.

We have moreover assumed that $N_\alpha=11$ and that $\sigma=2.5\times 10^{-9}$.

The time history of the density is plotted in Figure \ref{fig:1}. We see that the support of the density is split in two disjoints subsets. Then, when the position of the center of mass is modified by the exits from the domain, the new position of the center of mass induces some numerical particles to modify their velocity in order to reduce their global cost.

Moreover, we observe that the numerical boundary conditions are coherent with the problem (i.e., the density vanishes in $x=0$ and in $x=1$).

\begin{figure}[h!]
	\begin{center}
		a)\ \includegraphics[width=0.45\textwidth]{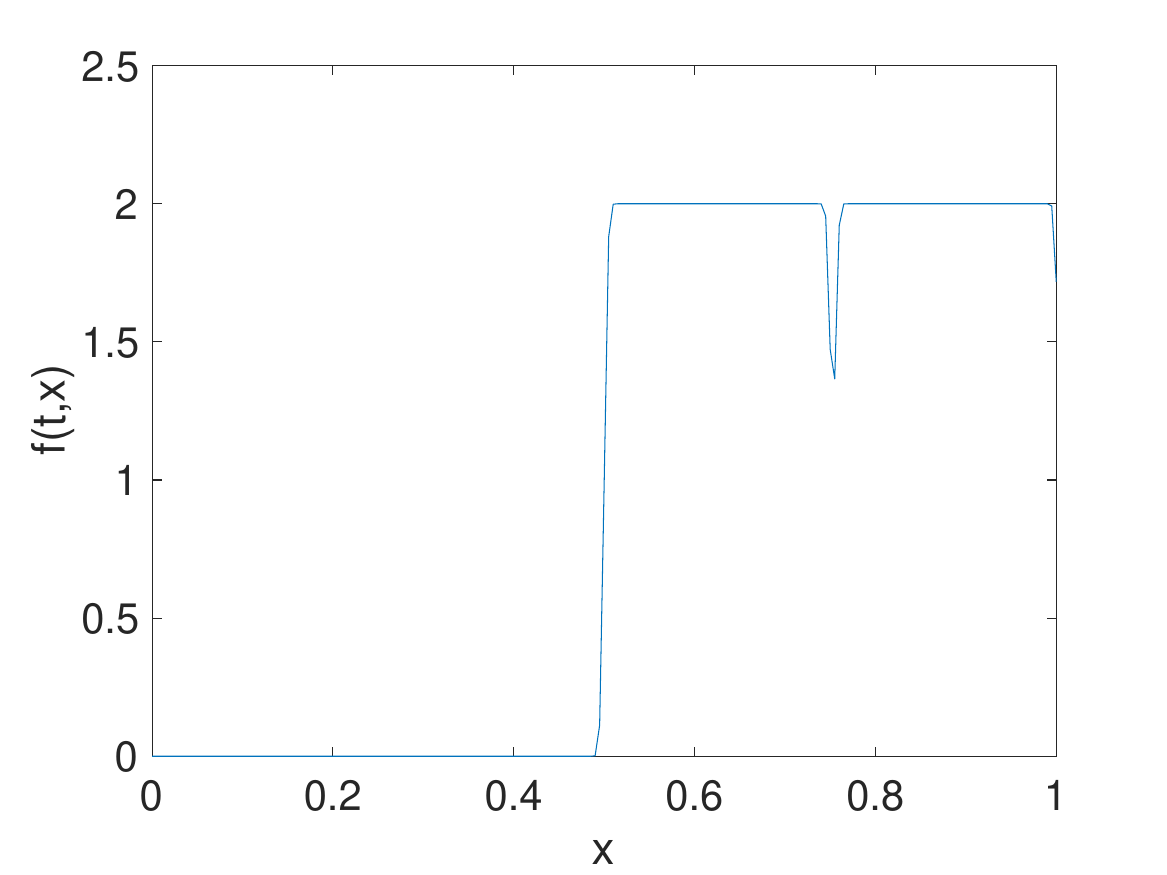}\
		b)\ \includegraphics[width=0.45\textwidth]{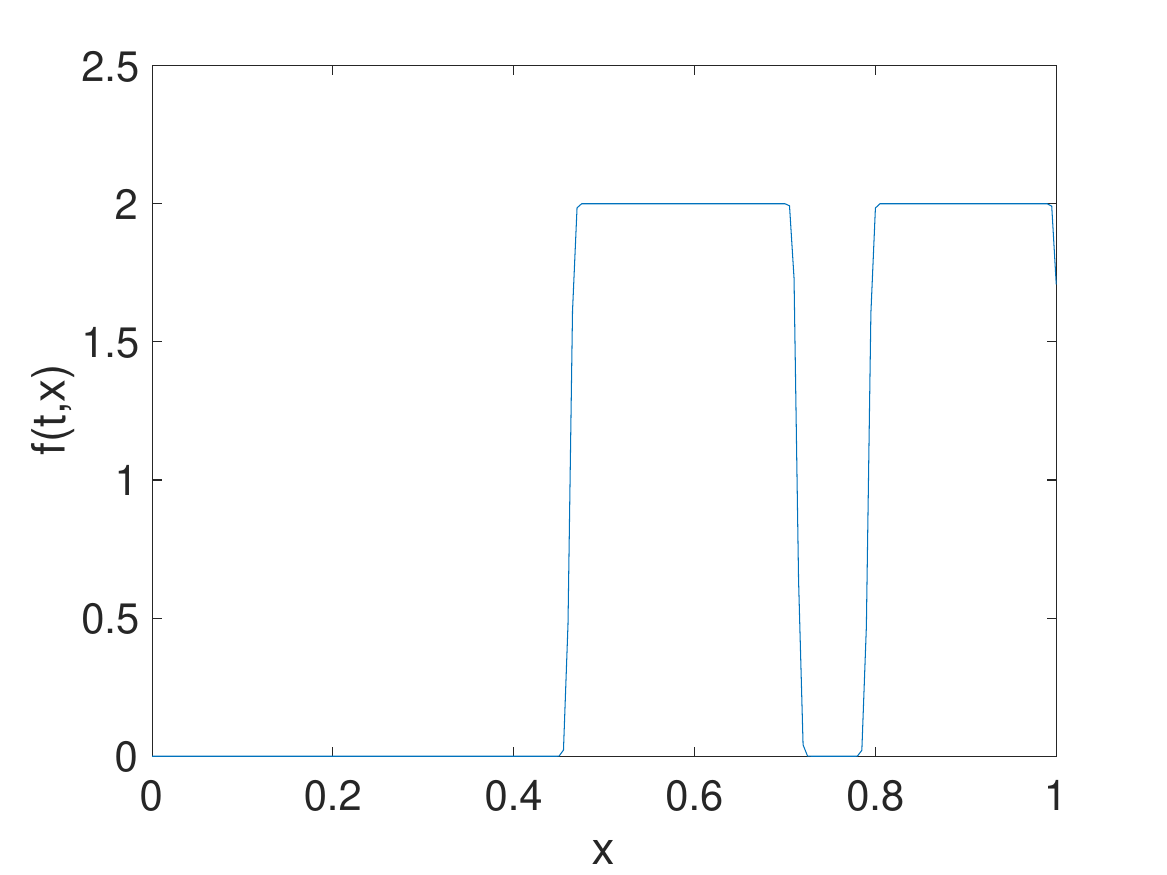}\\[1.2ex]
		c)\ \includegraphics[width=0.45\textwidth]{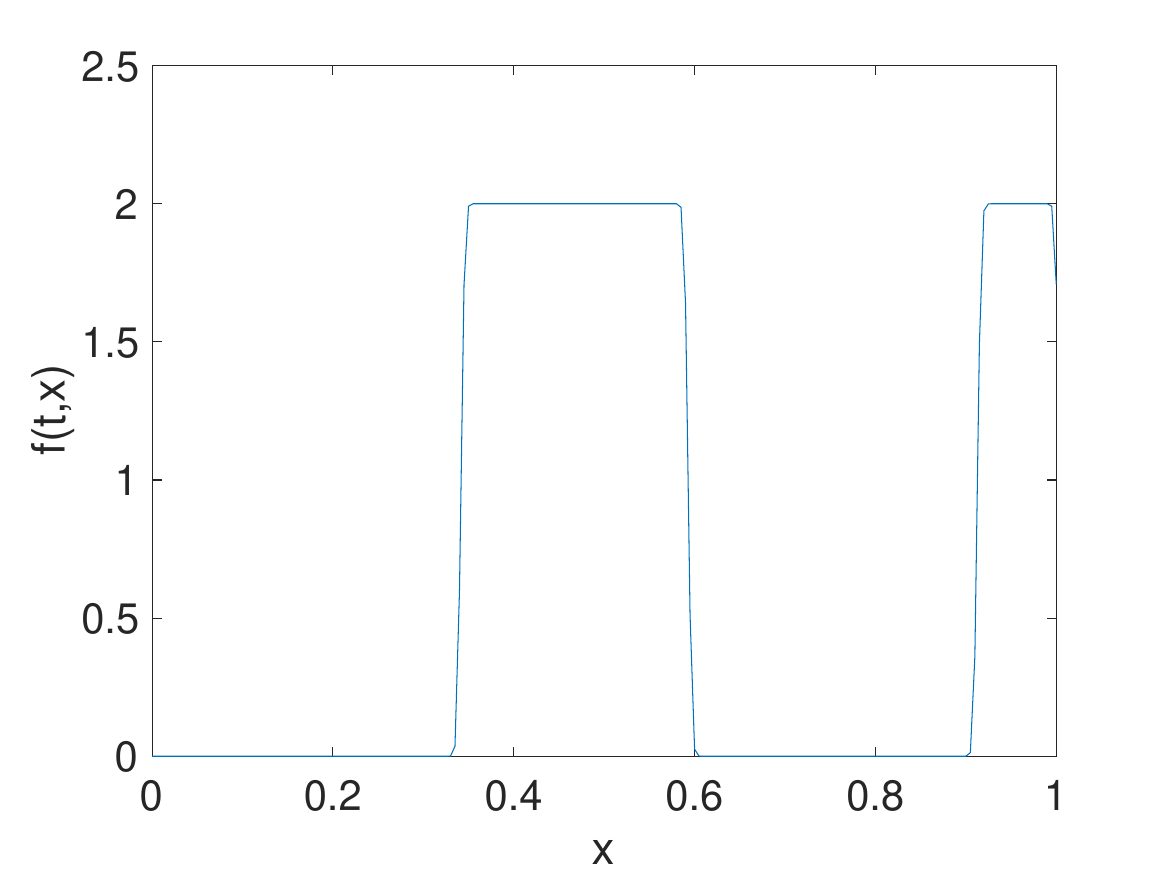}\
		d)\ \includegraphics[width=0.45\textwidth]{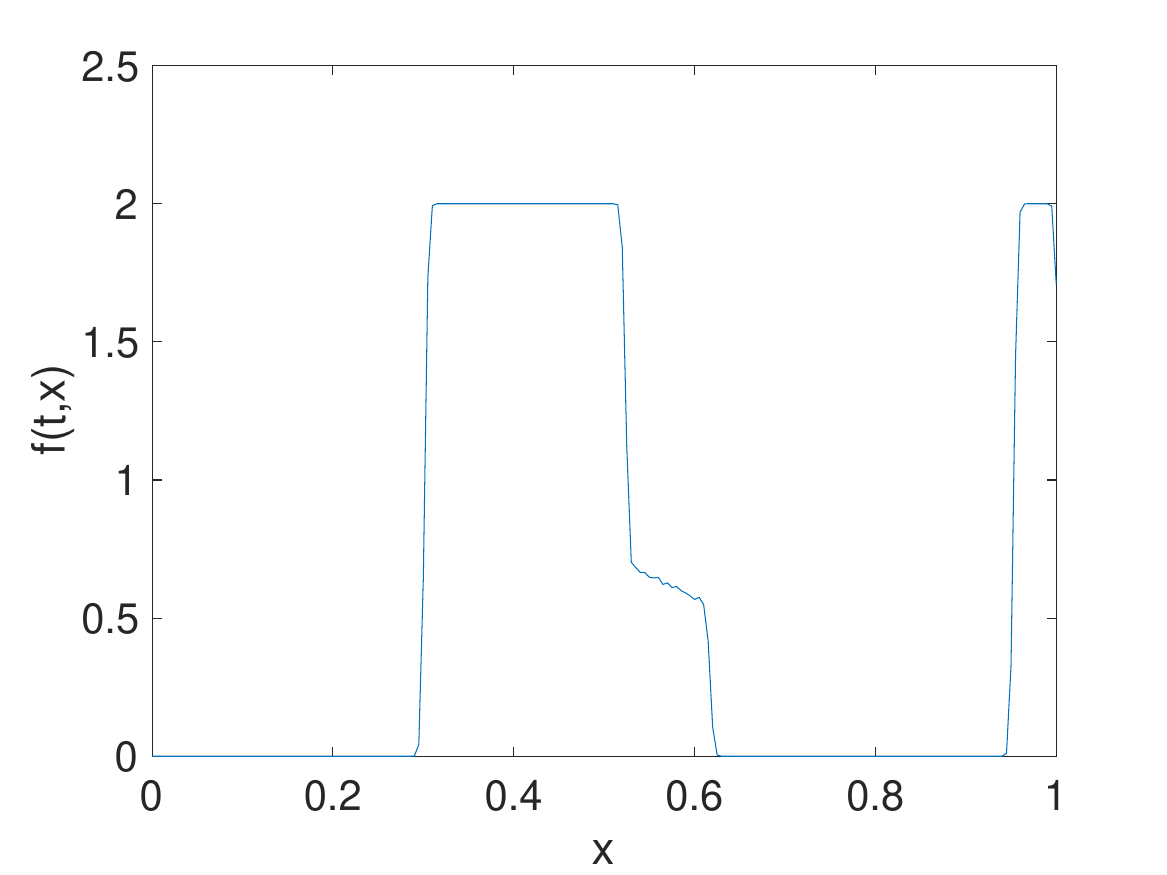}\\ [1.2ex]
		e)\ \includegraphics[width=0.45\textwidth]{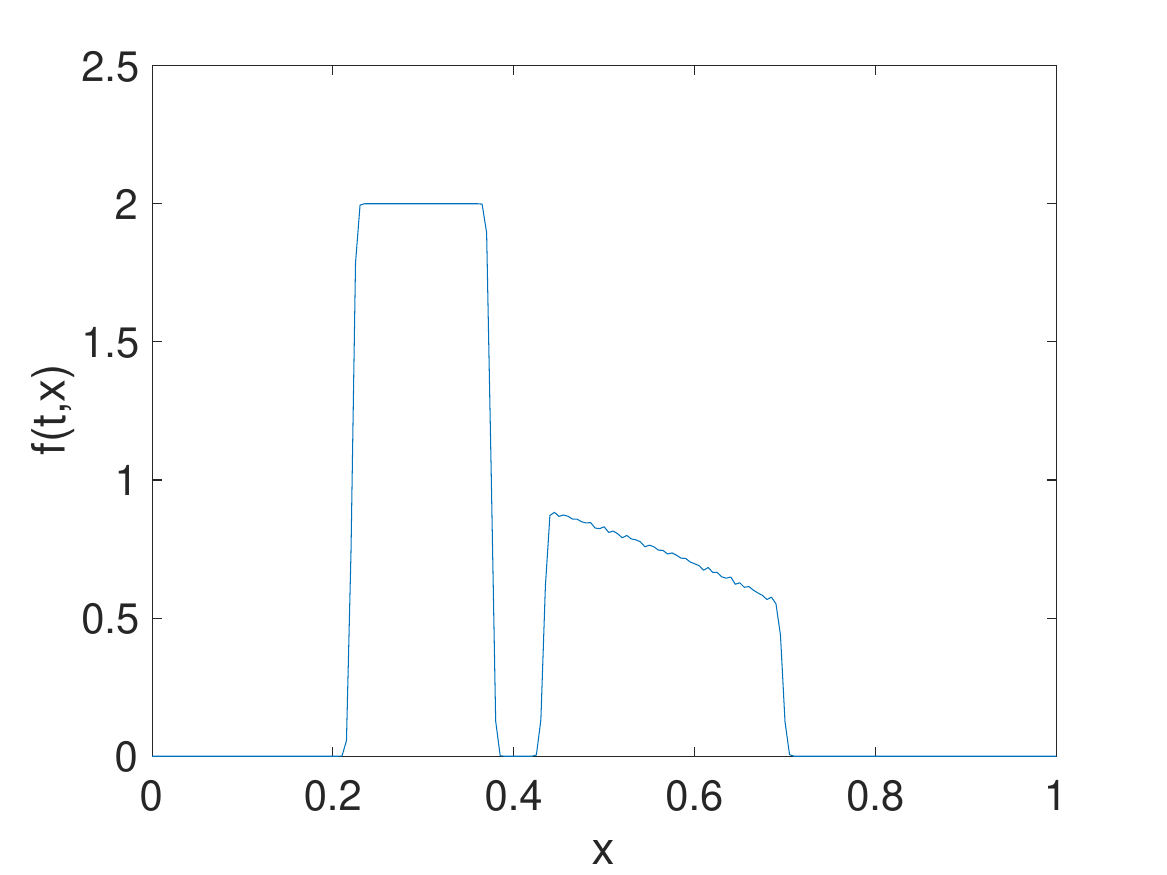}
		f)\ \includegraphics[width=0.45\textwidth]{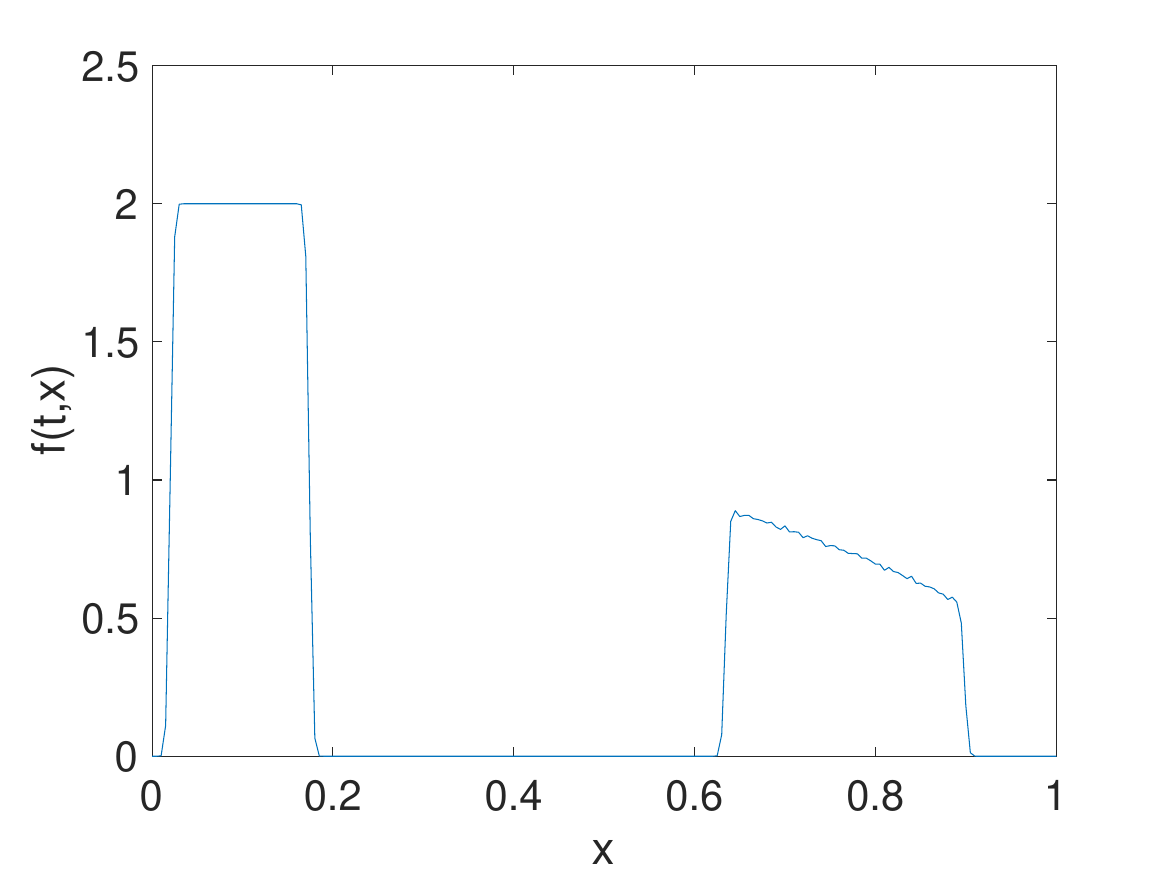}
		\caption{Profiles of the spatial distribution for six different time instants: a) $t=0.012$, b) $t=0.2$, c) $t=0.8$,
			d) $t=1$, e) $t=1.4$ and f) $t=2.4$.} \label{fig:1}
	\end{center}
\end{figure}

Moreover, we have plotted, in Figure \ref{fig:2} (left) the time evolution of the total mass with the data of our simulation and, in Figure \ref{fig:2} (right) the time evolution of the center of mass of the population.

\begin{figure}[!ht]
	\centering
	\includegraphics[width=0.45\textwidth]{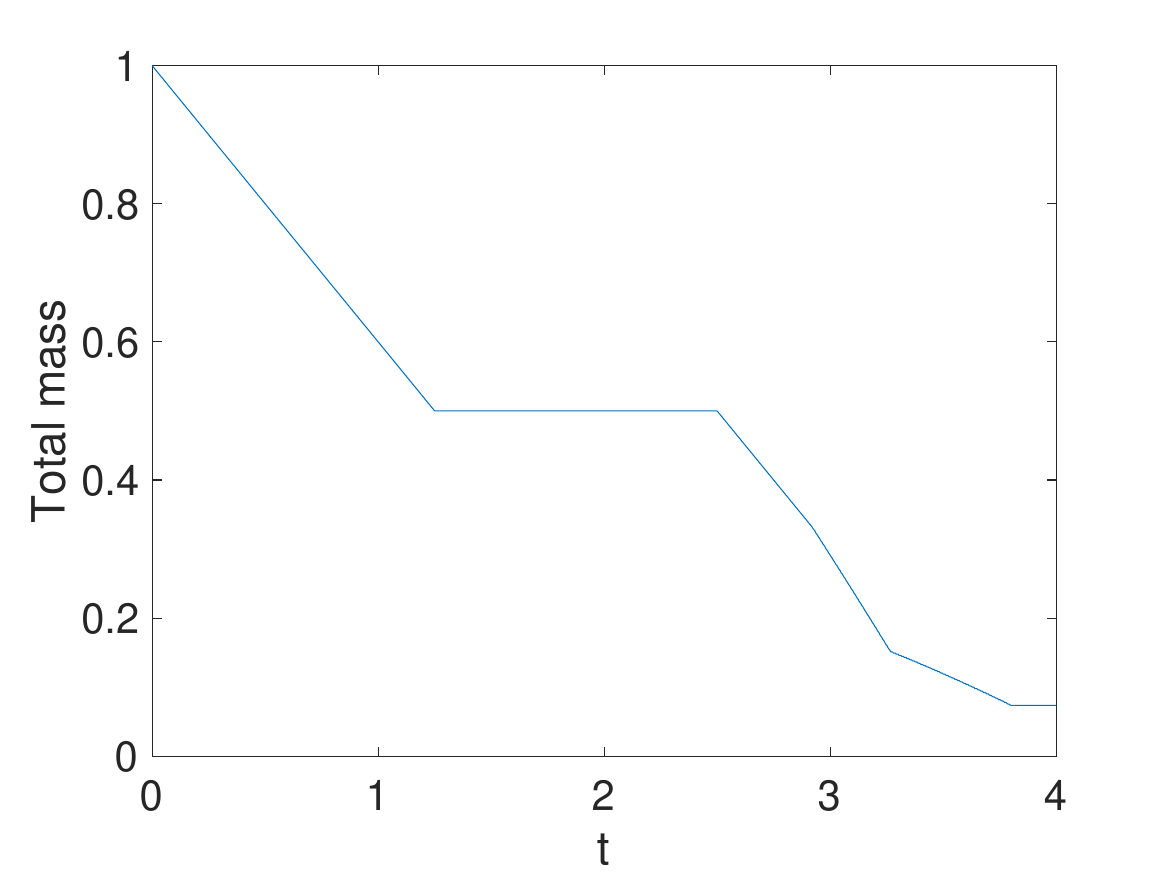}
	\includegraphics[width=0.45\textwidth]{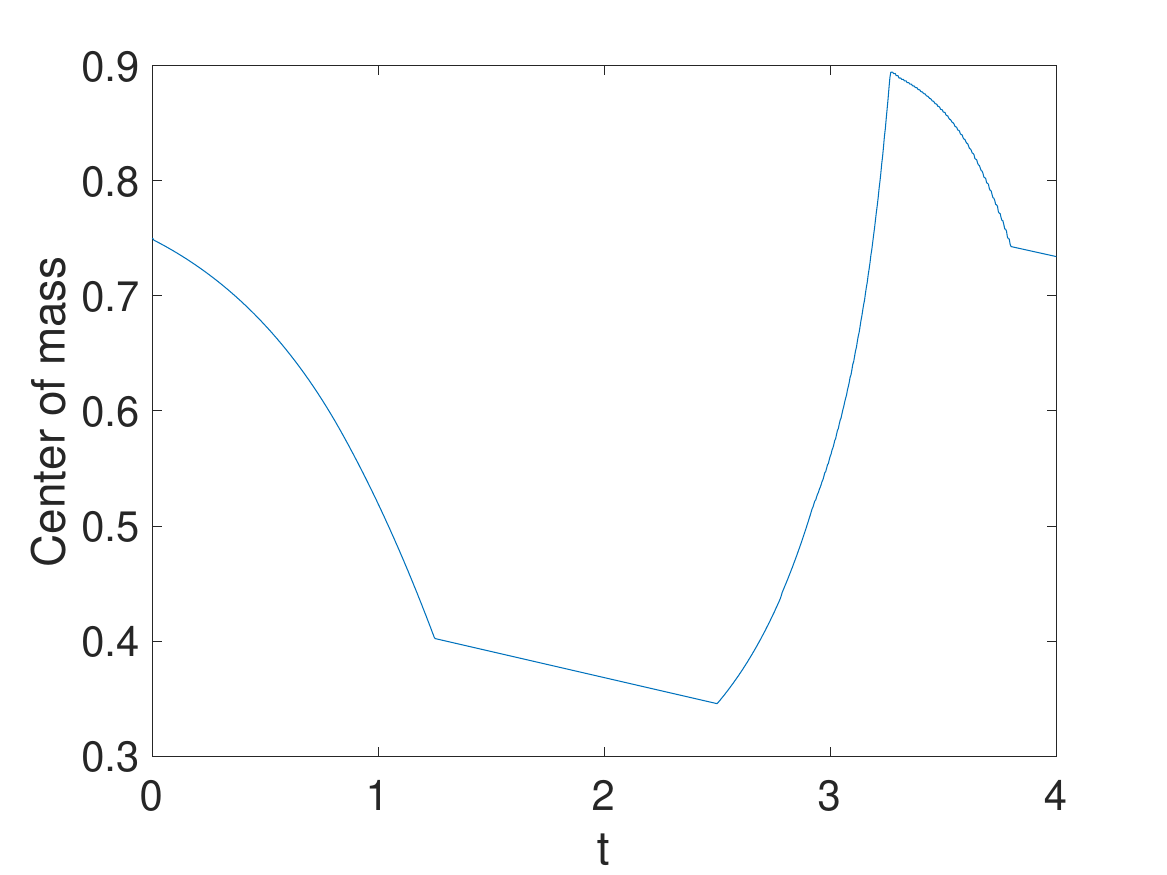}
	\caption{Time evolution of the total mass (left) and time evolution of the center of mass (right).}
	\label{fig:2}
\end{figure}
 
The profile of the center of mass is non-monotone and all the oscillations are coherent with the loss of mass during the time evolution of the population.

It is clear that, in principle, an iterative procedure may be time-consuming.

In particular, the computational time heavily depend both on the number of points of the discretized control set and on the number of numerical particles.
A strategy used in our article has been to work in two steps. In the first step, we have computed the numerical solution by using the full discretized control set and
a reduced number of numerical particles. Then, this solution of the numerical simulation with a relatively low number of numerical particles has been the basis for reducing the control set, by eliminating the elements of the discretized control set which are not relevant for the minimization of the cost fuction.

We have observed, in our numerical experiments, that the fixed point is reached quite rapidly  once the set of admissible control velocities is reduced: 
as shown in Figure 
\ref{fig:3} (left), 3 iterations have been enough to reach the Nash equilibrium in the studied case. In the first iteration, by summing all the modification in
each time step, 1,064,896 individual strategy modifications have been observed; in the second iteration, 8,382 individual modification have been carried
out and in the third one, 1,005 individual strategy modifications have been reported. Finally, in the forth iteration step no individual modifications have been observed, and hence the Nash equilibrium has been reached.
We underline that, in the time span of the simulation, by multiplying the number of time steps by the number of numerical particles,
the total number of possible strategy modifications is equal to $6\times 10^6$.

\begin{figure}[!ht]
	\centering
	\includegraphics[width=0.45\textwidth]{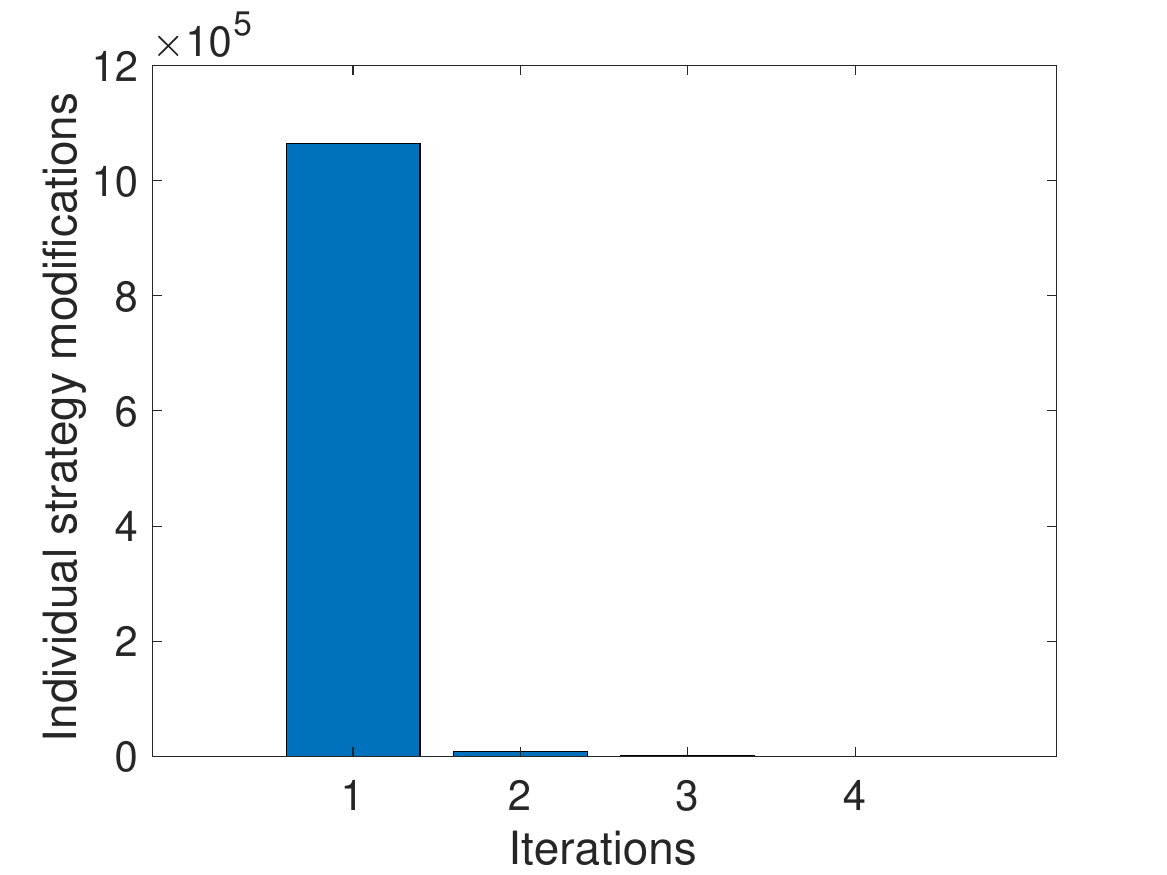}
	\includegraphics[width=0.45\textwidth]{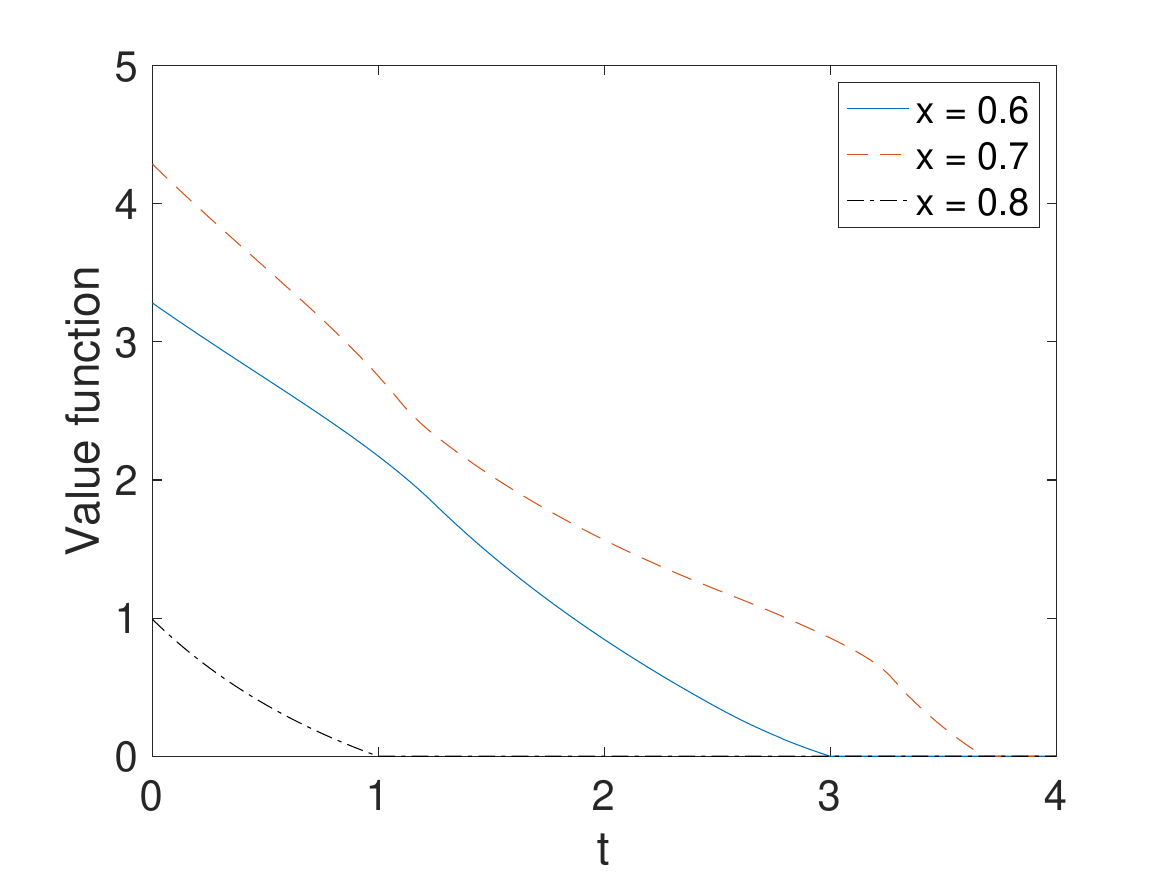}
	\caption{Individual strategy modifications before reaching the Nash equilibrium (left) and time evolution of the value function at $x=0.6$, $x=0.7$
		and $x=0.8$ (right).}
	\label{fig:3}
\end{figure}

We conclude our analysis of the numerical experiment by underlying that our method allows to reconstruct the value function not only as a function of
space and time, but also as a function of time for a particle starting at a given point $x\in\Omega$ as $t=0$. In Figure 
\ref{fig:3} (right), we have plotted the time evolution of the value function for particles starting at $x=0.6$, $x=0.7$ and $x=0.8$. All of the are curves are strictly decreasing functions of time and are identically equal to zero for times greater than the exit time of the corresponding particle (for example,
$t\geq 3,684$ for a particle located at $x=0.7$ when $t=0$).

\section{Appendix} \label{sec:appendix}

We here recall some basic facts concerning H\"older continuous functions and provide a proof of the explicit solution to the product log equation \eqref{eq:productlog}.

\begin{lemma}\label{le:lipbasic}
	Assume that $k:X\rightarrow Y$ is Hö\"older continuous with exponent $\alpha$ and coefficient $\|k\|_{\alpha}$. Then the following hold:
	\begin{enumerate}
		\item if $h:X\rightarrow Y$ is H\"older continuous with exponent $\alpha$ and coefficient $\|h\|_{\alpha}$, then $h + k$ is H\"older continuous with exponent $\alpha$ and coefficient $\|h\|_{\alpha}+\|k\|_{\alpha}$,
		\item if $X$ is bounded and $h:X\rightarrow Y$ is H\"older continuous with exponent $\alpha$ and coefficient $\|h\|_{\alpha}$, then $hk$ is H\"older continuous with exponent $\alpha$ and coefficient $\|k\|_{\infty}\|h\|_{\alpha}+\|h\|_{\infty}\|k\|_{\alpha}$
		\item if $h:X\rightarrow Y$ is H\"older continuous with exponent $\beta$ and coefficient $\|h\|_{\beta}$, then $h \circ k$ is H\"older continuous with exponent $\alpha\beta$ and coefficient $\|h\|_{\beta}\|k\|^{\beta}_{\alpha}$.
		\item if $X$ is bounded and $h:X\rightarrow Y$ is bounded, then $h * k$ is H\"older continuous with exponent $\alpha$ and coefficient $|X|\|h\|_{\infty}\|k\|_{\alpha}$, where $|X|$ denotes the measure of $X$. 
	\end{enumerate}
\end{lemma}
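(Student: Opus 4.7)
The plan is to establish each of the four items by a short, self-contained computation that directly applies the definition of Hölder continuity. In every case I take arbitrary $x, y \in X$ and bound $|F(x)-F(y)|$ above by the claimed coefficient times $|x-y|^\gamma$ for the appropriate exponent $\gamma$, then pass to the supremum.

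For (1), the triangle inequality splits $|(h+k)(x)-(h+k)(y)|$ into the sum of the pointwise increments of $h$ and $k$, so that the two Hölder bounds combine to give the coefficient $\|h\|_\alpha + \|k\|_\alpha$. For (2), I would use the standard add-and-subtract trick $h(x)k(x)-h(y)k(y) = (h(x)-h(y))k(x) + h(y)(k(x)-k(y))$, estimating each summand by the sup-norm of one factor times the Hölder modulus of the other. Boundedness of $X$ enters here only to guarantee that a Hölder continuous function on $X$ is automatically bounded, so that the sup norms appearing in the coefficient are finite.

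For (3), the bounds chain: applying the $\beta$-Hölder bound on $h$ at the points $k(x), k(y)$ yields $|h(k(x))-h(k(y))| \leq \|h\|_\beta |k(x)-k(y)|^\beta$, and raising the $\alpha$-Hölder bound on $k$ to the power $\beta$ produces the factor $|x-y|^{\alpha\beta}$ with coefficient $\|h\|_\beta\|k\|_\alpha^\beta$. For (4), using the convention $(h*k)(x) = \int_X h(x-y)k(y)\,dy$ and the change of variables $w = x-y$ to rewrite this as $\int h(w)k(x-w)\,dw$, the difference $(h*k)(x)-(h*k)(z)$ becomes a single integral of $h(w)$ against $k(x-w)-k(z-w)$; pulling the absolute value inside, using $|h|\leq\|h\|_\infty$ and $|(x-w)-(z-w)|=|x-z|$, yields the bound $|X|\|h\|_\infty\|k\|_\alpha|x-z|^\alpha$.

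None of the four steps presents a real obstacle: they are textbook exercises, recorded here only for use as citations in the proofs of Proposition \ref{prop:A5prop} and Remark \ref{rem:expcontrol}. The only mild care required is to check that the sup norms invoked in (2) and (4) are indeed finite (via boundedness of $X$) and to align the convolution convention in (4) with the one already used in the paper.
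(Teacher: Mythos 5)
Your proposal is correct and follows essentially the same route as the paper's own proof: the triangle inequality for the sum, the add-and-subtract decomposition $h(x_1)k(x_1)-h(x_1)k(x_2)+h(x_1)k(x_2)-h(x_2)k(x_2)$ for the product, chaining the two Hölder bounds for the composition, and pulling the absolute value inside the convolution integral to get the factor $|X|\|h\|_{\infty}\|k\|_{\alpha}$. No gaps; the only difference is your cosmetic change of variables in item (4), which lands on the same integral the paper writes directly.
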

\begin{proof}
	The statement about $f + g$ is trivial.
	
	Assume that $h$ is H\"older continuous with exponent $\beta$ and coefficient $|h|_{\beta}$ and that $k$ is Hö\"older continuous with exponent $\alpha$ and coefficient $|k|_{\alpha}$. Then for every $x_1, x_2 \in X$ we have
	\begin{align}\begin{split}\label{eq:holdprod}
	|h(x_1)k(x_1) - h(x_2)k(x_2)| &\leq |h(x_1)k(x_1) -h(x_1)k(x_2) +h(x_1)k(x_2) - h(x_2)k(x_2)|\\
	&\leq |h(x_1)|k(x_1) -k(x_2)| +|k(x_2)||h(x_1) - h(x_2)|\\
	&\leq \|h\|_{\infty}\|k\|_{\alpha}|x_1 -x_2|^{\alpha} +\|k\|_{\infty}\|h\|_{\beta}|x_1 - x_2|^{\beta},
	\end{split}
	\end{align}
	which proves the statement about $hk$. Concerning $h \circ k$ we have
	\begin{align}\begin{split}\label{eq:holdcirc}
	|h(k(x_1)) - h(k(x_2))| &\leq \|h\|_{\beta}|k(x_1) - k(x_2)|^{\beta}\\
	&\leq \|h\|_{\beta}\|k\|_{\alpha}^{\beta}|x_1 - x_2|^{\alpha\beta}.
	\end{split}
	\end{align}
	Finally,
	\begin{align}\begin{split}\label{eq:convcirc}
	|h*k(x_1) - h*k(x_2)| & = \left|\int_{X} h(y)k(x_1 - y)dy - \int_{X} h(y)k(x_2 - y)dy\right|\\
	&\leq  \int_{X} |h(y)|\left|k(x_1 - y) - k(x_2 - y)\right|dy\\
	&\leq |X|\|h\|_{\infty}\|k\|_{\beta}|x_1 - x_2|^{\beta}.
	\end{split}
	\end{align}
	This concludes the proof. 
\end{proof}

\begin{proposition}\label{prop:productlog}
	Let $p,q\in\R^d$ and $a,b\in\R$. Then the solution of
	\begin{align}\label{eq:productlogeq}
	p + \alpha + a \exp\left(b \alpha \cdot q\right) q = 0
	\end{align}
	is given by
	\begin{align}\label{eq:productlogsol}
	\alpha = -p - a \exp\left(b W\left(ab |q|^2 p\cdot q \exp\left(-bp\cdot q\right) \right) \right)q,
	\end{align}
	where $W$ denotes the principal branch of \mattia{the} Lambert W function \cite{corless1996lambertw}.
\end{proposition}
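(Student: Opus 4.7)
The plan is to exploit a structural observation: \eqref{eq:productlogeq} can be rewritten as $\alpha + p = -a\exp(b\alpha\cdot q)q$, which immediately shows that the unknown vector $\alpha + p$ must be a scalar multiple of the given vector $q$. Thus the $d$-dimensional problem collapses to a scalar problem via the ansatz $\alpha = -p - \lambda q$, and one only needs to determine the scalar $\lambda \in \R$.

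Next, I would substitute this ansatz into \eqref{eq:productlogeq}. From $\alpha\cdot q = -p\cdot q - \lambda|q|^2$ one obtains the transcendental scalar equation
$$\lambda = a\, e^{-b\,p\cdot q}\, e^{-b\lambda|q|^2}.$$
The key step is the substitution $z \defin b\lambda|q|^2$, which turns this into $z\, e^z = ab|q|^2 e^{-b\,p\cdot q}$, precisely the defining equation of the Lambert $W$ function. Selecting the principal branch yields $z = W\!\left(ab|q|^2 e^{-b\,p\cdot q}\right)$, and back-substitution gives the compact form
$$\alpha = -p \,-\, \frac{W\!\left(ab|q|^2 e^{-b\,p\cdot q}\right)}{b|q|^2}\,q.$$
A purely algebraic manipulation based on the identity $e^{W(Y)} = Y/W(Y)$ then converts the coefficient of $q$ into the \say{exponential} form appearing in \eqref{eq:productlogsol}.

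The main obstacle will be justifying the selection of the principal branch of the multivalued Lambert $W$. I would verify that the argument $ab|q|^2 e^{-b\,p\cdot q}$ lies in the admissibility domain $[-1/e,+\infty)$ of the principal branch (which is automatic when $ab\geq 0$, and in particular under the hypotheses used in Proposition \ref{prop:A6prop}), and then use the strict convexity of the Lagrangian established there to conclude that the critical point $\alpha^*$ is unique, so that no other branch of $W$ can contribute. Once these points are settled, the chain of substitutions $\lambda = z/(b|q|^2)$ and $\alpha = -p-\lambda q$ yields formula \eqref{eq:productlogsol} directly.
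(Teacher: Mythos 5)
Your reduction is genuinely cleaner than the paper's: you observe from $\alpha+p=-a\exp(b\,\alpha\cdot q)\,q$ that $\alpha+p$ is collinear with $q$ and pass immediately to the scalar unknown $\lambda$, whereas the paper assumes w.l.o.g.\ $q_1\neq 0$, expresses $\alpha_2,\dots,\alpha_d$ as affine functions of $\alpha_1$, recomputes $\alpha\cdot q$, and solves a scalar equation for $\alpha_1$ alone. Both routes land on the same transcendental equation and the same Lambert-$W$ resolution; your version avoids the coordinate bookkeeping, handles $q=0$ uniformly, and your discussion of the admissibility domain $[-1/e,+\infty)$ of the principal branch (together with uniqueness of the critical point from strict convexity) addresses a point the paper passes over in silence. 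Up to and including the formula
\begin{align*}
\alpha \;=\; -p \;-\; \frac{W\!\left(ab|q|^2 e^{-b\,p\cdot q}\right)}{b|q|^2}\,q,
\end{align*}
your argument is correct and in fact coincides with the paper's own intermediate line.

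The gap is in your last sentence. The \say{purely algebraic manipulation} you invoke cannot produce \eqref{eq:productlogsol} as printed, because \eqref{eq:productlogsol} is not equivalent to the formula you derived. Using $e^{-W(Y)}=W(Y)/Y$ with $Y\defin ab|q|^2e^{-b\,p\cdot q}$, the correct exponential form is
\begin{align*}
\alpha \;=\; -p \;-\; a\,\exp\!\left(-b\,p\cdot q - W\!\left(ab|q|^2 e^{-b\,p\cdot q}\right)\right) q,
\end{align*}
which differs from \eqref{eq:productlogsol} in three ways: the spurious factor $p\cdot q$ inside the argument of $W$, the factor $b$ multiplying $W$ in the exponent (and its sign), and the missing prefactor $e^{-b\,p\cdot q}$. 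A one-line check confirms the discrepancy: for $d=1$, $p=0$, $q=a=b=1$ the equation $\alpha+e^{\alpha}=0$ has the solution $\alpha=-W(1)\approx-0.567$, which your formula returns, while \eqref{eq:productlogsol} gives $-\exp(W(0))=-1$, which does not solve the equation. So the step you describe as routine would in fact fail; you should either state the corrected closed form explicitly or flag that the target identity cannot hold as written. (The same inconsistency is present between the paper's penultimate and final displayed lines, so your derivation, not the printed formula, is the trustworthy one.)
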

\begin{proof}
	First notice that if $q = 0$ then the statement is straightforward. Assume therefore that w.l.o.g. $q_1 \ne 0$.
	
	Notice that any $\alpha = (\alpha_1, \ldots, \alpha_d) \in \R^d$ which solves \eqref{eq:productlogeq} must also satisfy the system
	\begin{align*}
	\begin{cases}
	p_1 + \alpha_1 + a q_1 \exp\left(b \alpha \cdot q\right)  = & 0 \\
	\qquad\quad\vdots \notag &  \\
	p_d + \alpha_d + a q_d \exp\left(b \alpha \cdot q\right)  = & 0
	\end{cases}
	\end{align*}
	which implies that for every $i = 2, \ldots, d$ it holds
	\begin{align}\label{eq:linearalpha}
	\alpha_i = \frac{q_i}{q_1}\alpha_1 + \frac{q_i}{q_1}p_1 - p_i.
	\end{align}
	This in turn implies that
	\begin{align*}
	\alpha \cdot q & = \alpha_1q_1 + \sum^d_{i = 2}\left(\frac{q_i}{q_1}\alpha_1 + \frac{q_i}{q_1}p_1 - p_i\right)q_i\\
	& = \alpha_1\frac{1}{q_1}\sum^d_{i = 1}q^2_i + p_1\frac{1}{q_1}\sum^d_{i = 1}q^2_i - \sum^d_{i = 1}p_iq_i\\
	& = \alpha_1 \frac{|q|^2}{q_1} + \frac{p_1}{q_1}|q|^2 -p\cdot q.
	\end{align*}
	If we plug this identity into the equation for $\alpha_1$ we get
	\begin{align*}
	p_1 + \alpha_1 + & a q_1 \exp\left(\alpha_1 \frac{b|q|^2}{q_1} + \frac{bp_1}{q_1}|q|^2 -bp\cdot q\right) = 0 \\
	& \Longleftrightarrow \alpha_1 = -p_1 - a q_1\exp\left(\frac{bp_1}{q_1}|q|^2 -bp\cdot q\right) \exp\left(\alpha_1 \frac{b|q|^2}{q_1}\right)
	\end{align*}
	It is well-known that the solution for the above equation in $\alpha_1$ is given by
	\begin{align*}
	\alpha_1 &= -p_1 - \frac{q_1}{b |q|^2}W\left(ab|q|^2 \exp\left(\frac{bp_1}{q_1}|q|^2 -bp\cdot q\right) \exp\left(-\frac{bp_1}{q_1}|q|^2\right) \right) \\
	& \alpha_1 = -p_1 - \frac{q_1}{b |q|^2}W\left(ab|q|^2 \exp\left(-bp\cdot q\right) \right)\\
	& \alpha_1 = -p_1 - \frac{q_1}{b |q|^2}ab|q|^2 \exp\left(-bp\cdot q\right) \exp\left(W\left(ab|q|^2 \exp\left(-bp\cdot q\right)\right) \right)\\
	& \alpha_1 = -p_1 - a \exp\left(b W\left(ab|q|^2 p\cdot q \exp\left(-bp\cdot q\right) \right) \right)q_1,
	\end{align*}
	where we used the Lambert W identity $W(z) = z\exp\left(-W(z)\right)$. If we plug the above identity into equation \eqref{eq:linearalpha} we get the statement.
\end{proof}


\bigskip
\noindent{\bf Acknowledgements.}
This work has been carried out in the framework of the project \textsl{Kimega} (ANR-14-ACHN-0030-01).
The authors thank Prof. Pierre Cardaliaguet for pointing out the problem studied in the article as well as for useful comments and suggestions.


\bibliography{Literature}	

\end{document}